 \newtheorem{theorem}{Theorem}[section]
\newtheorem{lemma}[theorem]{Lemma}
\newtheorem{corollary}[theorem]{Corollary}
\newtheorem{proposition}[theorem]{Proposition}
\newtheorem*{claim*}{Claim} 
 \theoremstyle{definition}
 \newtheorem{definition}[theorem]{Definition}
 \newtheorem{remark}[theorem]{Remark}
 \newtheorem{example}[theorem]{Example}
\newtheorem*{step*}{Step*}  
\numberwithin{equation}{section}
\newcommand {\Z}{\mathbb{Z}} %% integers
\newcommand {\R}{\mathbb{R}} %% reals
\newcommand {\Q}{\mathbb{Q}} %% rationals
\newcommand {\C}{\mathbb{C}} %% complex
\newcommand{\AAA}{\mathcal{A}}
\newcommand{\HH}{\mathcal{H}}
\DeclareMathOperator{\Ker}{Ker}
\DeclareMathOperator{\End}{End}
\DeclareMathOperator{\Hom}{Hom}
\DeclareMathOperator{\Id}{Id}
\begin{document}
\title[Finiteness properties of algebraic group shifts]{On dynamical finiteness properties of algebraic group shifts}  
\author[X.K.Phung]{Xuan Kien Phung}
%\address{Universit\'e de Strasbourg, CNRS, IRMA UMR 7501, F-67000 Strasbourg, France}
\email{phungxuankien1@gmail.com}
\subjclass[2010]{37B51, 37B10, 14L10, 37B15} 
\keywords{Algebraic group, algebraic group subshift, subshift of finite type, sofic subshift, infinite alphabet symbolic system, higher dimensional Markov shift, descending chain condition, Artinian group, Artinian modules,  periodic point}
\begin{abstract}
Let $G$ be a  group  and let $V$ be an algebraic group over an algebraically closed field.  
We introduce algebraic group subshifts $\Sigma \subset V^G$ which generalize both the class 
of algebraic sofic subshifts of $V^G$ 
and the class of closed group subshifts over finite group alphabets. 
When $G$ is a polycyclic-by-finite group, we show that 
$V^G$ satisfies the descending chain condition and that 
the notion of algebraic group subshifts, the notion of 
algebraic group sofic subshifts, and that of algebraic group subshifts of finite type are all 
equivalent. 
Thus, we obtain extensions of well-known 
results of Kitchens and Schmidt 
to cover the case of many non-compact group alphabets.  

\end{abstract}
\date{\today}
\maketitle

\setcounter{tocdepth}{1}
 
\section{Introduction} 
\subsection*{Notation}
The set of integers and the set of real numbers are denoted 
by $\Z$ and $\R$ respectively.  
 Let $A, B$ be sets. Denote by $A^B$ the set of all maps from $B$ into $A$. 
Let $C \subset B$. If $x \in A^B$, the restriction $x\vert_C \in A^C$ is given by $x\vert_C(c) = x(c)$ for all $c \in C$.
If $X \subset A^B$, the restriction of $X$ to $C$ by defined as $X_C \coloneqq \{ x\vert_C \colon x \in X \} \subset A^C$. 
Fix an algebraically closed field $K$. 
An algebraic variety over $K$ is a reduced $K$-scheme of finite type and is identified with the set of $K$-points 
(cf.~\cite[Corollaire~6.4.2]{ega-1}). 
An algebraic group means a group that is also an algebraic variety with group operations given by algebraic morphisms (cf.~\cite{milne-group-book}). 
Algebraic subvarieties are Zariski closed subsets and algebraic subgroups are abstract subgroups which are also algebraic subvarieties.   
\par 
The main goal of this paper is to extend several  
well-known results 
on the descending chain condition and the finiteness property
of closed group subshifts whose alphabets are compact Lie groups to the context 
of algebraic group subshifts whose alphabets are algebraic groups over an algebraically closed field. 
By results of Tanaka and Chevalley (cf.~\cite{chevalley-lie-1}), 
the category of compact Lie groups is a certain subcategory of the category of $\R$-algebraic groups. 
Many algebraic groups are not complete and thus not compact when the base field is the field of complex  numbers  (e.g.~nontrivial affine spaces).  
 \par 
In order to state the results, we introduce some basic definitions of symbolic dynamics.  
Let  $A$ be a set, called the \emph{alphabet}, and let $G$ be a group, called the \emph{universe}.  
Elements of $A^G = \prod_{g \in G} A$ are called \emph{configurations} over the group $G$ and the alphabet $A$. 
The \emph{shift action} of the group $G$  on $A^G$ 
is defined by $(g,x) \mapsto g x$, where $gx(h) \coloneqq  x(g^{-1}h)$ for all  $x \in A^G$ and
$g,h \in G$.  
A configuration $x \in A^G$ is \emph{periodic} 
if its $G$-orbit is finite in $A^G$. 
If $A$ is a group then $A^G$ admits a natural group structure induced by pointwise group operation on each factor. 
\par 
The \emph{prodiscrete topology} on $A^G$ is the product topology where each factor $A$ is equipped with 
the discrete topology. With respect to this topology, $A^G$ is compact if and only if $A$ is finite. 
 To avoid confusion, when $A$ is a topological space and not merely a set, 
the \emph{Tychonoff topology} on $A^G$ will mean the product topology induced by the original topology on each factor $A$. 
By Tychonoff's theorem, $A^G$ is compact with respect to the Tychonoff topology if $A$ is compact. 
\par 
A  group $G$ is \emph{polycyclic-by-finite} if there exist subgroups $G=G_n \supset G_{n-1} \supset \cdots \supset G_0=\{1\}$ 
such that for every $1 \leq k \leq n$, $G_{k-1}$ is a normal subgroup of $G_{k}$ and $G_k/G_{k-1}$ is cyclic (cf.~\cite{passman-book-ring}). 
\par   
In \cite{kitchens-schmidt}, \cite[Theorem~4.2]{schmidt-book}, Kitchens and Schmidt  established 
the following important descending chain property (see, e.g, \cite{schmidt-book} for various applications):  
\begin{theorem}
[cf.~\cite{schmidt-book}]
\label{t:kitchens-schmidt-chain} 
Let $G$ be a  polycyclic-by-finite group and let $A$ be 
a compact Lie group. 
Let $(\Sigma_n)_{n \geq 0}$ be a descending sequence 
of $G$-invariant subgroups of $A^G$. 
Suppose that for all $n \geq 0$, $\Sigma_n$ is closed in $A^G$ 
with respect to the Tychonoff topology.  
Then the sequence  $(\Sigma_n)_{n \geq 0}$  eventually stabilizes. 
\end{theorem}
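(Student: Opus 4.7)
The plan is to combine two classical ingredients. First, the closed subgroups of a compact Lie group satisfy the descending chain condition: the pair $(\dim \Sigma, |\pi_0(\Sigma)|) \in \Z_{\geq 0} \times \Z_{\geq 1}$ strictly decreases in the lexicographic order under any proper inclusion of closed subgroups, so any strictly descending chain has finite length. Second, by a classical theorem of P.~Hall, the integer group ring $\Z[G]$ of a polycyclic-by-finite group $G$ is (both left and right) Noetherian.

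My first move would be to handle the case where $A$ is a compact abelian Lie group, since there the two ingredients match up cleanly through Pontryagin duality. The dual $\widehat{A^G}$ is the discrete abelian group $\bigoplus_{g \in G} \widehat{A}$; the shift action of $G$ on $A^G$ dualizes to the regular $G$-action on the summands, giving an isomorphism of $\Z[G]$-modules $\widehat{A^G} \cong \widehat{A} \otimes_{\Z} \Z[G]$. Since $A$ is a compact Lie group, $\widehat{A}$ is a finitely generated abelian group, so $\widehat{A^G}$ is a finitely generated $\Z[G]$-module. Closed $G$-invariant subgroups of $A^G$ correspond bijectively and inclusion-reversingly to $\Z[G]$-submodules of $\widehat{A^G}$ (via orthogonal complement), so a descending chain $(\Sigma_n)$ becomes an ascending chain of $\Z[G]$-submodules of a finitely generated module over a Noetherian ring, which stabilizes.

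Next I would reduce the general compact Lie case to the abelian case. The identity component $A^0$ is a closed normal subgroup with $A/A^0$ finite, so there is an exact sequence $1 \to (A^0)^G \to A^G \to (A/A^0)^G \to 1$ of compact groups with a natural $G$-action. Given a descending chain $(\Sigma_n)$, consider simultaneously the images $\overline{\Sigma_n}$ in $(A/A^0)^G$ and the intersections $\Sigma_n \cap (A^0)^G$. For connected $A^0$ one would further iterate using the short exact sequence $1 \to Z(A^0)^0 \to A^0 \to A^0/Z(A^0)^0 \to 1$ with $Z(A^0)^0$ a torus (hence abelian compact Lie) and $A^0/Z(A^0)^0$ a compact Lie group of strictly smaller dimension, inducting on $\dim A$. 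At each stage the abelian subquotients are handled by Pontryagin duality as above, while the finite alphabet pieces are handled by a direct Kitchens-type argument (closed $G$-invariant subgroups of $F^G$ for $F$ finite satisfy DCC, again since $\widehat{F^G}$ is a finitely generated $\Z[G]$-module).

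The main obstacle I expect is controlling the glueing in this reduction: stability of the images in a quotient $(B)^G$ and of the intersections with a closed subspace $(C)^G$ does not automatically imply stability of the original chain $(\Sigma_n)$, because extensions of $G$-invariant groups can twist. The cleanest way around this is probably to observe that, at fixed $n$, one has an exact sequence of closed $G$-invariant subgroups $1 \to (\Sigma_n \cap (A^0)^G) \to \Sigma_n \to \overline{\Sigma_n} \to 1$, and to combine eventual stability of the two ends via a standard five-lemma-type argument in the category of closed $G$-invariant subgroups. Making this glueing precise, while keeping track of the Tychonoff topology throughout, is the technical heart of the proof.
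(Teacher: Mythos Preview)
The paper does not give its own proof of this statement; it quotes it from Schmidt's book. That said, the paper's general framework in Section~9 (admissible Artinian group structures) does yield a proof, and that proof follows Schmidt's original argument rather than yours: one inducts on the polycyclic length of $G$, using only your ingredient~(1) --- the DCC on closed subgroups of $A^n$ for each finite $n$ --- together with inverse-limit arguments to pass from a normal subgroup $H$ of algebraic Markov type to $G$ when $G/H$ is cyclic. Pontryagin duality and the Noetherianity of $\Z[G]$ never enter.

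Your abelian case is correct and is exactly the classical Kitchens--Schmidt argument for that setting. The gap is in your reduction to the abelian case. You propose to induct on $\dim A$ via the chain $A \supset A^0 \supset Z(A^0)^0$, claiming that $A^0/Z(A^0)^0$ has strictly smaller dimension. But if $A^0$ is semisimple (e.g.\ $A = \mathrm{SU}(2)$) then $Z(A^0)^0$ is trivial and the dimension does not drop, so your induction stalls precisely at the semisimple stage --- and there Pontryagin duality is unavailable. To handle a semisimple compact Lie alphabet you would need some other mechanism, and the natural one is exactly the Schmidt/paper approach: forget the structure of $A$ beyond the DCC on closed subgroups of its finite powers, and induct on $G$ instead. (Your gluing worry, by contrast, is not a real obstacle: if $\Sigma_{n+1}\subset\Sigma_n$ have equal images in $(A/A^0)^G$ and equal intersections with $(A^0)^G$, then $\Sigma_{n+1}=\Sigma_n$ by the elementary coset argument.)

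In short: your two ingredients are both valid, but they don't combine the way you suggest. The approach that actually works (Schmidt's, and the paper's generalization) discards ingredient~(2) entirely and runs the whole argument on ingredient~(1), doing the induction on the group $G$ rather than on the alphabet $A$.
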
 

The main object studied in this paper is the class of 
\emph{algebraic group subshifts} defined as follows. 
\begin{definition}
\label{d:alg-subgroup-shift} 
Let $G$ be a group and let $V$ be  an algebraic group over an algebraically closed field.  
A $G$-invariant subset $\Sigma \subset V^G$ is called an \emph{algebraic group subshift} of $V^G$ if it is 
closed in $V^G$ with respect to the prodiscrete topology and  
if the restriction $\Sigma_E \subset V^E$ is an algebraic subgroup 
for any finite subset $E \subset G$.        
\end{definition}  
With the above notations, an algebraic group subshift of $V^G$ is automatically an abstract subgroup (cf.~Proposition~\ref{p:alg-grp-sshift-subgroup}). 
\par 
As in the classical theory of symbolic dynamics, 
the closedness property in the full shift $A^G$, where $A$ is a set and $G$ is a group, 
 with respect to the prodiscrete topology of $A^G$ is the weakest closedness condition we must 
require for $G$-invariant subsets of $A^G$ to avoid pathologies. 
\par 
Our first main result (cf.~Theorem~\ref{t:main-polycyclic-finite} and Proposition~\ref{c:markov-alg}) 
is the following extension of Theorem \ref{t:kitchens-schmidt-chain}  
to algebraic group subshifts. 
\begin{theorem}
\label{t:intro-descending-property-alg-grp}
Let $G$ be a polycyclic-by-finite group. 
Let $V$ be an algebraic group over an algebraically closed field.  
Let $(\Sigma_n)_{n \geq 0}$ be a descending sequence of algebraic group subshifts of $V^G$. 
Then the sequence $(\Sigma_n)_{n \geq 0}$ eventually stabilizes.  
\end{theorem}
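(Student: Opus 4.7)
The plan is to adapt the Kitchens–Schmidt strategy from the compact-Lie setting to the algebraic one, replacing the Lie-dimension bounds by Noetherianity of algebraic varieties at each finite window, and then propagating stabilization along a polycyclic-by-finite series for $G$.

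As a first step I would record two preparatory observations. Since each $(\Sigma_n)_E$ is by definition an algebraic subgroup of the algebraic variety $V^E$, the closed set $\Sigma_n = \bigcap_{E\Subset G}\pi_E^{-1}((\Sigma_n)_E)$ is an intersection of Zariski-closed cylinders and is therefore closed even in the Tychonoff topology on $V^G$ induced by the Zariski topology on $V$. Because $V$ is Zariski quasi-compact, Tychonoff's theorem makes $V^G$ quasi-compact in this finer topology. On the other hand, for each fixed finite $E \subset G$, the descending chain $((\Sigma_n)_E)_{n \geq 0}$ of algebraic subgroups of $V^E$ stabilizes at some index $N(E)$ by the Noetherianity of the underlying Zariski topological space $V^E$. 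A compactness argument using the quasi-compactness of $V^G$ then yields $\bigl(\bigcap_n \Sigma_n\bigr)_E = \bigcap_n (\Sigma_n)_E = (\Sigma_{N(E)})_E$ for every finite $E$, so that $\Sigma_\infty := \bigcap_n \Sigma_n$ is itself an algebraic group subshift of $V^G$, and the theorem reduces to the assertion that the indices $N(E)$ can be bounded uniformly in $E$.

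To obtain the uniform stabilization I would induct on the length of a polycyclic-by-finite series $\{1\} = G_0 \lhd G_1 \lhd \cdots \lhd G_n = G$ with cyclic quotients. The base case $G = \{1\}$ is Noetherianity of $V$. For the inductive step, fix a normal subgroup $H \lhd G$ of smaller length with $G/H$ cyclic, and a transversal $T$ of $H$ in $G$, so that one has a $G$-equivariant identification $V^G \simeq (V^T)^H$. When $G/H$ is finite, $T$ is finite, $V^T$ is an algebraic group, and a descending sequence of $G$-algebraic group subshifts of $V^G$ becomes a descending sequence of $H$-algebraic group subshifts of $(V^T)^H$, to which the induction hypothesis applies directly. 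When $G/H = \langle t\rangle$ is infinite cyclic, one instead exhausts $T$ by the finite sets $T_k = \{t^{-k}, \ldots, t^k\}$, restricts to the corresponding subshifts over the algebraic alphabets $V^{T_k}$, applies the induction hypothesis at each level to obtain per-$k$ stabilization, and assembles these into a single uniform $N$ via a further Noetherianity/diagonal argument combined with the Tychonoff-compactness of $V^G$.

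I expect the main obstacle to lie in the infinite-cyclic case of the inductive step, namely in two points: first, verifying that the re-indexed families inside $(V^{T_k})^H$ are genuinely $H$-algebraic group subshifts with the correct Zariski-closedness at every finite window in $H$ (this requires careful use of the compatibility of the shift by $t$ with algebraic morphisms, and of Chevalley's theorem on constructibility to control projections); and second, turning the sequence of stabilization indices coming from the different truncations $T_k$ into a single $N$ valid for every finite $E \subset G$. This second point is the algebraic counterpart of P.~Hall's Noetherianity of polycyclic-by-finite group rings used in the classical Kitchens–Schmidt proof, and it is where the polycyclic-by-finite hypothesis is truly needed.
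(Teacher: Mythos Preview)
Your overall architecture---induct along a polycyclic series, handle finite quotients by enlarging the alphabet to $V^T$, handle $\Z$-quotients separately---is exactly the paper's. Your Tychonoff--Zariski compactness shortcut for the preparatory step (showing $\Sigma_\infty=\bigcap_n\Sigma_n$ is again an algebraic group subshift and that $(\Sigma_\infty)_E=\bigcap_n(\Sigma_n)_E$) is correct and in fact slicker than the paper's route, which goes through an explicit inverse-limit lemma for algebraic varieties (Lemma~\ref{l:inverse-limit-alg-grp} and Theorem~\ref{c:descending-property}).

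The genuine gap is in the infinite-cyclic step. You propose to restrict to $HT_k$ for each $k$, apply the induction hypothesis to get a stabilization index $N_k$, and then ``assemble'' the $N_k$ into a uniform $N$ by a Noetherianity/diagonal/compactness argument. You correctly flag this as the crux, but no such assembly argument is supplied, and I do not see one: the per-$k$ chains live in different ambient spaces $(V^{T_k})^H$, and nothing prevents the $N_k$ from growing with $k$. The classical Kitchens--Schmidt proof closes this via Pontryagin duality and P.~Hall's theorem on Noetherian group rings, but there is no analogous duality for general algebraic group alphabets, so invoking an ``algebraic counterpart of P.~Hall's Noetherianity'' is a placeholder, not an argument. (Also, Chevalley's constructibility theorem is unnecessary here: images of algebraic-group homomorphisms are automatically closed.)

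The paper sidesteps this difficulty entirely. Rather than directly bounding the $N(E)$, it passes through the equivalence in Proposition~\ref{c:markov-alg}: the descending chain condition holds for all algebraic group subshifts of $V^G$ if and only if every algebraic group subshift of $V^G$ is of finite type. The inductive step for a $\Z$-extension $0\to H\to G\to\Z\to 0$ then proves the \emph{finite-type} property for a single $\Sigma$. The key construction (Theorem~\ref{t:algebraic-group-Z-d}) is the auxiliary descending sequence
\[
X_n \;=\; \bigl\{\,x\vert_H : x\in\Sigma,\ x\vert_{\Phi(H\times\{-n,\dots,-1\})}=\varepsilon\,\bigr\}\ \subset\ V^H,
\]
which one checks (Lemmata~\ref{claim:1}--\ref{claim:2}) is a descending chain of algebraic group subshifts of $V^H$. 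By the induction hypothesis this stabilizes at some $N$, and from this one reads off that $\Sigma$ is determined by its restriction to the strip $\Omega=\Phi(H\times I_N)$; a second application of the induction hypothesis to $\Sigma_\Omega\subset (V^{I_N})^H$ gives a finite window inside $\Omega$. This ``vanishing-on-a-strip'' construction is the missing idea in your sketch; without it, the uniform bound on $N(E)$ does not follow.
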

\par 
Now let $G$ be a group and let $A$ be a set.  
A  $G$-invariant subset  $\Sigma \subset A^G$ is called a \emph{subshift} of $A^G$. 
Remark that no condition on the closedness of such subshift $\Sigma$ in $A^G$ 
is required.
Given subsets $D \subset G$ and $P \subset A^D$, 
we define the following subshift of $A^G$: 
\begin{equation}
\label{e:sft} 
\Sigma(A^G; D,P) \coloneqq \{ x \in A^G \colon (g^{-1}  x)\vert_{D} \in P \text{ for all } g \in G\}. 
\end{equation} 
Such a set $D$ is called a
\emph{defining window} of $\Sigma(A^G; D,P)$. 
If $D$ is finite, $ \Sigma(A^G; D,P)$ is clearly closed in $A^G$ with respect to the prodiscrete topology 
 and it is called the 
\emph{subshift of finite type} of $A^G$ associated with $D$ and $P$. 
\par
Subshifts of finite type can be regarded as generalizations of higher dimensional topological Markov shifts. 
They are fundamental objects in various areas such as information theory and smooth dynamical systems  
(see e.g.~\cite{boyle-buzzi-gomez-2006}, \cite[Ch.~7]{kitchens-book}, \cite{sarig-1999}, \cite{sarig-2013}, \cite{lima-sarig-2019}, \cite{good-sft} and the references therein). 
\par 
In \cite{kitchens-schmidt}, \cite[Theorem~4.2]{schmidt-book}, the following remarkable finiteness result on closed group subshifts 
with compact Lie group alphabets is established: 
\begin{theorem}
[cf.~\cite{schmidt-book}]
\label{t:kitchens-schmidt-sft}
Let $G$ be a polycyclic-by-finite group and let $A$ be 
a compact Lie group. 
Let $\Sigma \subset A^G$ be a $G$-invariant subgroup   
which is closed in $A^G$ with respect to the Tychonoff topology. 
Then $\Sigma$ is a subshift of finite type of $A^G$.  
\end{theorem}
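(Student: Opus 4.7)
My plan is to approximate $\Sigma$ from above by a descending family of subshifts of finite type, verify that each approximation is itself a Tychonoff-closed $G$-invariant subgroup, apply the descending chain condition of Theorem~\ref{t:kitchens-schmidt-chain} to stabilize the family, and then argue that the stable limit coincides with $\Sigma$. Concretely, for each finite $E \subset G$ set $P_E := \Sigma_E \subset A^E$ and consider the SFT
\[
\Sigma^{(E)} := \Sigma(A^G; E, P_E) = \{ x \in A^G \colon (g^{-1}x)\vert_E \in \Sigma_E \text{ for all } g \in G \}.
\]
Tautologically $\Sigma \subset \Sigma^{(E)}$.

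The first step is to verify that each $\Sigma^{(E)}$ is again a Tychonoff-closed $G$-invariant subgroup of $A^G$. Since $\Sigma$ is a subgroup of $A^G$, its projection $\Sigma_E \subset A^E$ is a subgroup, so $\Sigma^{(E)}$ is closed under the pointwise group operations. For closedness in the Tychonoff topology I use that $A^E$ is a compact Lie group and that $\Sigma$ is compact (a closed subset of the compact Hausdorff space $A^G$); hence the continuous image $\Sigma_E$ is compact, and therefore closed, in $A^E$. Each condition $(g^{-1}x)\vert_E \in \Sigma_E$ is then a Tychonoff-closed condition on $x$, and $\Sigma^{(E)}$ is the intersection of these closed conditions over $g \in G$. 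The $G$-invariance is a direct shift-computation.

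Since a polycyclic-by-finite group is finitely generated and hence countable, choose an exhaustion $E_1 \subset E_2 \subset \cdots$ of $G$ by finite subsets with $\bigcup_{n} E_n = G$. Whenever $E \subset E'$, restricting elements of $\Sigma_{E'}$ to $E$ lands in $\Sigma_E$, so $\Sigma^{(E')} \subset \Sigma^{(E)}$; in particular $(\Sigma^{(E_n)})_{n \geq 1}$ is a descending sequence of Tychonoff-closed $G$-invariant subgroups of $A^G$. Theorem~\ref{t:kitchens-schmidt-chain} then produces an index $N$ with $\Sigma^{(E_n)} = \Sigma^{(E_N)}$ for every $n \geq N$. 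The aim of the final step is to show $\Sigma = \Sigma^{(E_N)}$, which presents $\Sigma$ as an SFT with defining window $E_N$.

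The only nontrivial inclusion is $\Sigma^{(E_N)} \subset \Sigma$. Given $x \in \Sigma^{(E_N)}$, the stabilization gives $x\vert_{E_n} \in \Sigma_{E_n}$ for every $n \geq 1$, so one may choose $y_n \in \Sigma$ with $y_n\vert_{E_n} = x\vert_{E_n}$. Because $\bigcup_n E_n = G$, the sequence $(y_n)$ converges to $x$ in the prodiscrete topology of $A^G$. The prodiscrete topology is finer than the Tychonoff topology (the discrete topology on $A$ refines the Lie-group topology), so any Tychonoff-closed subset of $A^G$ is \emph{a fortiori} prodiscrete-closed; hence $\Sigma$ is prodiscrete-closed and contains the prodiscrete limit $x$. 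The step I expect to be the main obstacle is precisely this passage from ``Tychonoff-closed'' to ``prodiscrete-closed'' for $\Sigma$: it is what turns the stabilization of the SFT envelopes into an actual SFT presentation of $\Sigma$, and it rests critically on compactness of $A$ to make each $\Sigma_E$ closed in $A^E$ — exactly the feature that collapses in the algebraic-group setting and must be replaced by Noetherianity arguments in the main body of the paper.
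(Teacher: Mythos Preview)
Your argument is correct. Note, however, that Theorem~\ref{t:kitchens-schmidt-sft} is quoted in the paper as a known result of Kitchens--Schmidt (with a reference to \cite{schmidt-book}) rather than proved; the paper's contribution is the algebraic analogue. That said, your proof is exactly the mechanism the paper uses to pass from the descending chain condition to the finite-type property in its own setting: compare the implication (b)$\Rightarrow$(a) in Proposition~\ref{c:markov-alg}, where one forms the envelopes $\Sigma_n=\Sigma(V^G;E_n,\Sigma_{E_n})$, observes they descend with intersection $\Sigma$, stabilizes them, and reads off an SFT presentation. Your compactness argument (that $\Sigma_E$ is closed in $A^E$ because $\Sigma$ is compact) plays the role that, in the algebraic case, is taken over by the hypothesis that $\Sigma$ is an algebraic group subshift together with the Noetherianity of the Zariski topology; and your final ``prodiscrete is finer than Tychonoff'' step is the compact-alphabet substitute for the inverse-limit Lemma~\ref{l:closed-lim}. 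So the approach is essentially the same as the paper's template, specialised to the compact Lie setting.
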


It turns out that an analogous result also holds for algebraic group subshifts. Before giving the statement, 
we recall a strong finiteness condition on algebraic group subshifts introduced in \cite{cscp-2020}.  

 \begin{definition}
[cf.~\cite{cscp-2020}] 
\label{d:algr-sft-intro} 
Let $G$ be a group and let $V$ be an  algebraic group over an algebraically closed field.   
A subset $\Sigma \subset V^G$ is an \emph{algebraic group subshift of finite type} 
if there is a finite subset $D \subset G$ and an algebraic subgroup $W \subset V^D$ such that 
 $\Sigma = \Sigma(A^G; D,W)$. 
\end{definition}
\par 
With the above notations, it is obvious from Definition \ref{d:algr-sft-intro} and the definition in \eqref{e:sft}  
that algebraic group subshifts of finite type are indeed subshifts of finite type so that they are closed 
with respect to the prodiscrete topology. 
\par
We establish the following result (cf.~Theorem~\ref{t:main-polycyclic-finite}) which extends Theorem~\ref{t:kitchens-schmidt-sft} 
to notably cover the case of  non-compact algebraic group alphabets. 
\begin{theorem}
\label{t:intro-sft-property-alg-grp}
Let $G$ be a polycyclic-by-finite group. 
Let $V$ be an algebraic group over an algebraically closed field. 
Let 
$\Sigma$ be an algebraic group subshift of $V^G$.   
Then  $\Sigma$ is an algebraic group subshift of finite type of $V^G$. 
\end{theorem}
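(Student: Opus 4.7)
The plan is to approximate $\Sigma$ from above by a descending sequence of algebraic group subshifts of finite type, then invoke the descending chain condition of Theorem~\ref{t:intro-descending-property-alg-grp} to force stabilization.

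Since $G$ is polycyclic-by-finite, it is countable, so I fix an exhaustion $E_1 \subset E_2 \subset \cdots \subset G$ by finite subsets with $\bigcup_{n \geq 1} E_n = G$. For each $n$, set $W_n \coloneqq \Sigma_{E_n} \subset V^{E_n}$, which by the very definition of an algebraic group subshift (Definition~\ref{d:alg-subgroup-shift}) is an algebraic subgroup of $V^{E_n}$. I then define the approximating subshifts
\[
\tau_n \coloneqq \Sigma(V^G; E_n, W_n) = \{ x \in V^G \colon (g^{-1}x)\vert_{E_n} \in W_n \text{ for all } g \in G \}.
\]
By construction each $\tau_n$ is an algebraic group subshift of finite type in the sense of Definition~\ref{d:algr-sft-intro}, hence in particular an algebraic group subshift (via Proposition~\ref{c:markov-alg}, i.e., the fact that SFTs with algebraic group defining data have algebraic restrictions to all finite windows). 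The sequence $(\tau_n)_{n \geq 1}$ is descending: for $n \leq m$, the projection $\pi \colon V^{E_m} \to V^{E_n}$ sends $W_m = \Sigma_{E_m}$ onto $W_n = \Sigma_{E_n}$, so membership of $(g^{-1}x)\vert_{E_m}$ in $W_m$ implies membership of $(g^{-1}x)\vert_{E_n}$ in $W_n$.

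The crucial identity is $\Sigma = \bigcap_{n \geq 1} \tau_n$. The inclusion $\Sigma \subset \tau_n$ is immediate from the $G$-invariance of $\Sigma$ and the definition $W_n = \Sigma_{E_n}$. For the reverse inclusion, take $x \in \bigcap_n \tau_n$. Then for every $n$, $x\vert_{E_n} \in W_n = \Sigma_{E_n}$, so we may pick $y_n \in \Sigma$ with $y_n\vert_{E_n} = x\vert_{E_n}$. Given any finite $F \subset G$, there exists $N$ with $F \subset E_N$, and for $n \geq N$ we have $y_n\vert_F = x\vert_F$; hence $y_n \to x$ in the prodiscrete topology. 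Since $\Sigma$ is prodiscretely closed, $x \in \Sigma$.

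Applying Theorem~\ref{t:intro-descending-property-alg-grp} to the descending chain $(\tau_n)_{n \geq 1}$ of algebraic group subshifts, there exists $N$ with $\tau_N = \tau_n$ for all $n \geq N$, so $\tau_N = \bigcap_{n \geq 1} \tau_n = \Sigma$. Thus $\Sigma = \tau_N$ is an algebraic group subshift of finite type, as required. The main obstacle is the legitimacy of citing Theorem~\ref{t:intro-descending-property-alg-grp} for the $\tau_n$: one must know that subshifts defined by a single finite window and an algebraic subgroup of the window's full shift automatically satisfy the \emph{finite-window algebraicity} demanded by Definition~\ref{d:alg-subgroup-shift}. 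This reduces to showing that for any finite $E \subset G$, the restriction $(\tau_n)_E$ is an algebraic subgroup of $V^E$, which in turn rests on the classical fact that the image of a morphism of algebraic groups is an algebraic subgroup, combined with a compatibility argument over a sufficiently large finite window containing all relevant $G$-translates of $E_n$ intersecting $E$.
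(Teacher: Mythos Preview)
Your argument is correct and coincides with the paper's own derivation: it is precisely the implication (b)$\Rightarrow$(a) of Proposition~\ref{c:markov-alg}, fed with the descending chain condition of Theorem~\ref{t:intro-descending-property-alg-grp}. The one point you flag at the end---that each $\tau_n = \Sigma(V^G; E_n, W_n)$ is genuinely an algebraic group subshift in the sense of Definition~\ref{d:alg-subgroup-shift}, not merely a subshift of finite type---is handled in the paper not by Proposition~\ref{c:markov-alg} but by Theorem~\ref{t:sofic-alg-grp} (algebraic group sofic subshifts, hence in particular algebraic group SFTs, are algebraic group subshifts over any countable universe); cite that instead of the ad hoc sketch.

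One remark on logical dependencies: in the paper, Theorem~\ref{t:intro-descending-property-alg-grp} and Theorem~\ref{t:intro-sft-property-alg-grp} are proved simultaneously from Theorem~\ref{t:main-polycyclic-finite}, whose inductive step (Theorem~\ref{t:algebraic-group-Z-d}) establishes the finite-type property (a) directly. So in the paper's internal logic, Theorem~\ref{t:intro-descending-property-alg-grp} is actually \emph{downstream} of the statement you are proving, via the (a)$\Rightarrow$(b) direction of Proposition~\ref{c:markov-alg}. Your derivation is still valid as a formal implication, but you should be aware that citing Theorem~\ref{t:intro-descending-property-alg-grp} here does not shortcut any of the real work.
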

  
Given sets $A, B$ and a group $G$, following the pioneering work of John von Neumann \cite{neumann-book}, 
a map $\tau \colon B^G  \to A^G$ is called a \emph{cellular automaton}  
 if there exist a finite subset $M \subset G$ called \emph{memory set} 
and a map $\mu \colon  B^M \to A$ called \emph{local defining map} such that 
\begin{equation} 
\label{e;local-property}
\tau(x)(g) = \mu( (g^{-1}x)\vert_M)  \quad  \text{for all } x \in B^G \text{ and } g \in G.
\end{equation}
\par 
Clearly, a cellular automaton $\tau \colon B^G \to A^G$ is uniformly continuous and $G$-equivariant 
(cf.~\cite{book}).  The converse also holds by the Curtis-Hedlund theorem \cite{hedlund}  
 when $A, B$ are finite and in the general case by the result of \cite{csc-curtis-hedlund}. 
\par 
Now let  $U, V$ be algebraic groups over an algebraically closed field $K$.  
Recall the following definition introduced in \cite{cscp-2020} (see also \cite{ccp-2019}, \cite{phung-2018} and \cite{gromov-esav}). 
A cellular automaton $\tau \colon U^G \to V^G$ is an \emph{algebraic group cellular automaton}  
if $\tau$ admits a memory set $M$ whose associated local defining map 
$\mu \colon U^M \to V$ is a homomorphism of $K$-algebraic groups.  
Given subshifts $\Sigma_1 \subset U^G$ and $\Sigma_2 \subset V^G$, 
a map $\tau \colon \Sigma_1 \to \Sigma_2$ is an \emph{algebraic group cellular automaton} 
if it is the restriction of some algebraic group cellular automaton $U^G \to V^G$. 
\par 
We denote by $\Hom_{U,V, G\text{-algr}} (\Sigma_1, \Sigma_2)$ the set 
of all algebraic group cellular automata $\Sigma_1 \to \Sigma_2$. 
When $U=V$ and $\Sigma_1= \Sigma_2= \Sigma$, 
we denote $\End_{V, G\text{-algr}}(\Sigma) \coloneqq \Hom_{V,V, G\text{-algr}} (\Sigma, \Sigma)$. 

\begin{definition}
[cf.~\cite{cscp-2020}]
\label{d:alg-sofic-shift} 
Let $G$ be a group and let $V$ be an algebraic group over an algebraically closed field.  
A subset $\Sigma \subset V^G$  is an \emph{algebraic  group sofic subshift} 
if it is the image of an algebraic group subshift of finite type $\Sigma' \subset U^G$, 
where $U$ is a $K$-algebraic group, 
under an algebraic group cellular automaton $U^G \to V^G$.  
\end{definition} 
\par 
When the universe $G$ is countable, algebraic group sofic subshifts are indeed algebraic group subshifts  
(cf.~Theorem~\ref{t:sofic-alg-grp}). 
Thus we obtain the following direct consequence of Theorem~\ref{t:intro-sft-property-alg-grp}: 
\begin{corollary} 
Let $G$ be a polycyclic-by-finite group.  
Let $V$ be an algebraic group over an algebraically closed field. Let  
$\Sigma$ be an algebraic group sofic subshift of $V^G$. 
Then $\Sigma$ is an algebraic group subshift of finite type of $V^G$.
\qed 
\end{corollary}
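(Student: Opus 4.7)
The statement is presented as a direct consequence, so my proof proposal is essentially a two-line reduction to results already stated in the excerpt. Let me spell out the plan.

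First, I would observe that any polycyclic-by-finite group $G$ is countable. Indeed, from the definition, $G$ admits a finite subnormal series whose factors are cyclic, hence each $G_k$ is countable by induction on $k$, and $G = G_n$ is countable. This puts us in the regime where Theorem~\ref{t:sofic-alg-grp} (cited in the excerpt) applies. That theorem says precisely that for countable $G$, every algebraic group sofic subshift of $V^G$ is an algebraic group subshift of $V^G$ in the sense of Definition~\ref{d:alg-subgroup-shift}. So the sofic $\Sigma$ upgrades automatically to an algebraic group subshift.

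Next, I would invoke Theorem~\ref{t:intro-sft-property-alg-grp}, which is precisely the statement that, when $G$ is polycyclic-by-finite and $V$ is an algebraic group over an algebraically closed field, every algebraic group subshift of $V^G$ is in fact of finite type. Applying this to the $\Sigma$ produced in the previous step concludes the argument. So structurally the corollary is a two-step composition: \emph{sofic} $\Rightarrow$ \emph{subshift} (countability of $G$ and Theorem~\ref{t:sofic-alg-grp}), then \emph{subshift} $\Rightarrow$ \emph{subshift of finite type} (polycyclic-by-finite hypothesis and Theorem~\ref{t:intro-sft-property-alg-grp}).

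The main obstacle is not in this corollary at all but rather in the two inputs. The nontrivial content in Theorem~\ref{t:sofic-alg-grp} is verifying that the image $\tau(\Sigma')_E$ of an algebraic group subshift of finite type $\Sigma' \subset U^G$ under an algebraic group cellular automaton $\tau \colon U^G \to V^G$ is actually Zariski closed in $V^E$ for every finite $E \subset G$ --- and not merely constructible; this typically relies on the classical fact that a homomorphism of algebraic groups has closed image, together with a careful bookkeeping of the finite windows relating $\Sigma'$ and $\tau(\Sigma')$. The harder input is Theorem~\ref{t:intro-sft-property-alg-grp}, which in turn rests on the descending chain condition of Theorem~\ref{t:intro-descending-property-alg-grp} applied to the natural sequence $\Sigma(V^G; F_n, \Sigma_{F_n})$ for an exhaustion $F_n \uparrow G$ by finite subsets. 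Once those are granted, the corollary follows with no further work, and I would simply conclude with a one-sentence combination.
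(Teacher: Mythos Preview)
Your proposal is correct and matches the paper's own argument exactly: the paper deduces the corollary by first invoking Theorem~\ref{t:sofic-alg-grp} (using that polycyclic-by-finite groups are countable) to upgrade sofic to algebraic group subshift, and then Theorem~\ref{t:intro-sft-property-alg-grp} to conclude finite type. Your additional commentary on where the real work lies in those two inputs is also accurate.
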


\par 
Moreover, we obtain in this paper some first  finiteness results in the category of algebraic group subshifts 
(cf.~Section~\ref{s:applications}).  
\begin{theorem} 
\label{c:intro-algebraic-group-abelian} 
Let $G$ be a polycyclic-by-finite group. 
Let $U, V$ be algebraic groups over the same algebraically closed field. 
Let $\Sigma_1$ and  $\Sigma_2$ be respectively algebraic group subshifts 
of $U^G$ and  $V^G$.  
Let $\tau \in \Hom_{U,V, G\text{-algr}} (U^G, V^G)$. 
Then $\tau^{-1}(\Sigma_2)$ and $\tau(\Sigma_1)$  
are algebraic group subshifts of finite type of $U^G$ and $V^G$ respectively.  
\end{theorem}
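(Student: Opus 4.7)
The plan is to treat the preimage and image separately: the image goes through the sofic-implies-finite-type corollary preceding this theorem, while the preimage requires building a descending chain of algebraic group subshifts of finite type and then invoking Theorem~\ref{t:intro-descending-property-alg-grp}.

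For $\tau(\Sigma_1)$, Theorem~\ref{t:intro-sft-property-alg-grp} applied to the algebraic group subshift $\Sigma_1 \subset U^G$ gives at once that $\Sigma_1$ is already an algebraic group subshift of finite type. By Definition~\ref{d:alg-sofic-shift}, $\tau(\Sigma_1) \subset V^G$ is therefore an algebraic group sofic subshift, and the corollary preceding this theorem immediately promotes it to an algebraic group subshift of finite type.

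For $\tau^{-1}(\Sigma_2)$, fix a memory set $M$ of $\tau$ with local defining map $\mu \colon U^M \to V$, which by hypothesis is a homomorphism of algebraic groups, and for each finite $F \subset G$ let $\tau_F \colon U^{FM} \to V^F$ be the associated homomorphism of algebraic groups satisfying $\tau(x)|_F = \tau_F(x|_{FM})$. Choose an exhaustion $E_1 \subset E_2 \subset \cdots$ of $G$ by finite subsets, set $W_n \coloneqq \tau_{E_n}^{-1}((\Sigma_2)_{E_n}) \subset U^{E_n M}$, and define
\[
\Theta_n \coloneqq \Sigma\bigl(U^G;\, E_n M,\, W_n\bigr).
\]
Since $(\Sigma_2)_{E_n}$ is an algebraic subgroup of $V^{E_n}$ and $\tau_{E_n}$ is a morphism of algebraic groups, $W_n$ is an algebraic subgroup of $U^{E_n M}$, and each $\Theta_n$ is therefore an algebraic group subshift of finite type of $U^G$. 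Combining $G$-equivariance of $\tau$ with the $G$-invariance and prodiscrete closedness of $\Sigma_2$, one checks the identity $\bigcap_{n \geq 1} \Theta_n = \tau^{-1}(\Sigma_2)$. Since $(\Theta_n)$ is descending and $G$ is polycyclic-by-finite, Theorem~\ref{t:intro-descending-property-alg-grp} forces the sequence to stabilize at some index $N$, and consequently $\tau^{-1}(\Sigma_2) = \Theta_N$ is an algebraic group subshift of finite type.

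The main obstacle I anticipate is the intersection identity $\bigcap_n \Theta_n = \tau^{-1}(\Sigma_2)$. Unfolding membership in $\Theta_n$ through the definition of $\Sigma(U^G; E_n M, W_n)$ yields the condition $(g^{-1}\tau(x))|_{E_n} \in (\Sigma_2)_{E_n}$ for every $g \in G$; one must then interleave the ``for all $g \in G$'' quantifier built into the subshift-of-finite-type definition with the quantifier over the exhaustion index $n$, converting the family of finite-window conditions into $g^{-1}\tau(x) \in \Sigma_2$ via closedness of $\Sigma_2$, and finally collapsing to $\tau(x) \in \Sigma_2$ via $G$-invariance. This is the one nonformal step; once it is in place, both conclusions follow directly from the stated theorem and corollary as above.
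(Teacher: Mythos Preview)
Your argument is correct. The image part matches the paper's reasoning essentially verbatim. For the preimage, however, you take a genuinely different route from the paper.

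The paper first establishes a general fact (Proposition~\ref{p:ker}): for \emph{any} countable group $G$, the set $\tau^{-1}(\Sigma_2)$ is already an algebraic group subshift of $U^G$. The proof works restriction by restriction, showing for each finite $E \subset G$ that $(\tau^{-1}(\Sigma_2))_E$ is an algebraic subgroup of $U^E$ via an inverse-limit argument (Lemma~\ref{l:inverse-limit-alg-grp}) over translates of algebraic subgroups. Only after this is Theorem~\ref{t:intro-sft-property-alg-grp} invoked to upgrade to finite type in the polycyclic-by-finite case.

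Your approach bypasses Proposition~\ref{p:ker} entirely: you sandwich $\tau^{-1}(\Sigma_2)$ as the intersection of a descending sequence of explicit algebraic group subshifts of finite type $\Theta_n$, then apply the descending chain condition (Theorem~\ref{t:intro-descending-property-alg-grp}) directly. The intersection identity you flag as the main obstacle is indeed routine: the direction $\tau^{-1}(\Sigma_2) \subset \bigcap_n \Theta_n$ is immediate from $G$-invariance of $\Sigma_2$, and the reverse follows by taking $g = 1_G$ and using Lemma~\ref{l:closed-lim} with the closedness of $\Sigma_2$. Your method is more economical for this specific statement and avoids the inverse-limit machinery, at the cost of using the polycyclic-by-finite hypothesis from the outset; the paper's route buys the stronger intermediate statement that $\tau^{-1}(\Sigma_2)$ is an algebraic group subshift over any countable universe, not only over polycyclic-by-finite groups.
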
 
 \par 
Given a map $f \colon X \to X$ from a set $X$ to itself, the \emph{limit set} $\Omega(f)$ of $f$ is 
defined as the intersection of the images of its iterates, i.e., 
$\Omega(f) \coloneqq \bigcap_{n \geq 1} f^n(X)$, where 
$f ^n \coloneqq f \circ \cdots \circ f$ ($n$-times).  
The map $f$ is said to be \emph{stable} if 
$f^{n+1} (X) = f^n(X)$ for some integer $n \geq 1$. 
\par 
The notion of limit sets is introduced by 
Wolfram~\cite{wolfram-univ-1984}  and were subsequently studied notably in 
\cite{milnor-ca-1988}, \cite{culik-limit-sets-1989} \cite{kari-nilpotency-1992}, \cite{guillon-richard-2008}, \cite{cscp-2020}.  
\par 
We give another direct application of our main results 
on the dynamics and the limit sets of endomorphisms of algebraic group subshifts. 
\begin{corollary}
\label{c:limit-set-alg-grp} 
Let $G$ be a polycyclic-by-finite group and let $V$ be an algebraic group over an algebraically closed field. 
Let $\Sigma$ be an algebraic group subshift of $V^G$ and 
let $\tau \in \End_{V, G\text{-algr}} (\Sigma)$. 
Then the limit set $\Omega(\tau)$ is an algebraic group subshift of finite type of $V^G$ and $\tau$ is stable. 
\end{corollary}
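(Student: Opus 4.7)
The plan is to deduce both conclusions of the corollary from a single application of the descending chain condition (Theorem~\ref{t:intro-descending-property-alg-grp}) to the iterated images of $\tau$. Since $\tau \in \End_{V, G\text{-algr}}(\Sigma)$ sends $\Sigma$ into itself, the sequence $(\tau^n(\Sigma))_{n \geq 0}$ is automatically descending. If I can show that each term is an algebraic group subshift of $V^G$, then Theorem~\ref{t:intro-descending-property-alg-grp} will force the sequence to stabilize at some index $N \geq 1$, which immediately gives $\tau^{N+1}(\Sigma) = \tau^N(\Sigma)$ (so $\tau$ is stable) and $\Omega(\tau) = \tau^N(\Sigma)$.

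The first step is therefore to prove, by induction on $n$, that $\tau^n(\Sigma)$ is an algebraic group subshift of $V^G$ for every $n \geq 0$. For this, I would fix an algebraic group cellular automaton $\widetilde{\tau} \in \Hom_{V,V, G\text{-algr}}(V^G, V^G)$ whose restriction to $\Sigma$ equals $\tau$; such an extension is built into the definition of an algebraic group cellular automaton on the subshift $\Sigma$, and one has $\tau^n(\Sigma) = \widetilde{\tau}^n(\Sigma)$ for all $n$. The base case $n = 0$ is the standing hypothesis on $\Sigma$. For the inductive step, assuming $\widetilde{\tau}^{n-1}(\Sigma)$ is an algebraic group subshift of $V^G$, Theorem~\ref{c:intro-algebraic-group-abelian} applied to the algebraic group cellular automaton $\widetilde{\tau} \colon V^G \to V^G$ and the algebraic group subshift $\widetilde{\tau}^{n-1}(\Sigma)$ shows that $\widetilde{\tau}^n(\Sigma) = \widetilde{\tau}(\widetilde{\tau}^{n-1}(\Sigma))$ is an algebraic group subshift of finite type of $V^G$, in particular an algebraic group subshift.

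With this in hand, Theorem~\ref{t:intro-descending-property-alg-grp} applied to the descending sequence $(\tau^n(\Sigma))_{n \geq 0}$ produces the desired stabilization index $N$. The limit set $\Omega(\tau) = \bigcap_{n \geq 1} \tau^n(\Sigma)$ then coincides with $\tau^N(\Sigma)$, which by the inductive step of the previous paragraph is an algebraic group subshift of finite type of $V^G$. The argument is essentially a formal consequence of the two main theorems of the paper, so I do not anticipate a serious obstacle; the only point worth verifying carefully is that the chosen extension $\widetilde{\tau}$ indeed lies in $\Hom_{V,V, G\text{-algr}}(V^G, V^G)$ so that the hypotheses of Theorem~\ref{c:intro-algebraic-group-abelian} are met at each inductive stage, but this is immediate from the definition of an algebraic group cellular automaton between subshifts.
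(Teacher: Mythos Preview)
Your argument is correct. Both you and the paper first establish that each $\tau^n(\Sigma)$ is an algebraic group subshift (you via Theorem~\ref{c:intro-algebraic-group-abelian}, the paper via Theorem~\ref{t:sofic-alg-grp}), but the two proofs diverge after that. The paper does \emph{not} invoke the descending chain condition directly: it applies the intersection result Theorem~\ref{c:descending-property} to conclude that $\Omega(\tau)$ is an algebraic group subshift, upgrades it to finite type via Theorem~\ref{t:intro-sft-property-alg-grp}, and then cites the external result \cite[Theorem~1.3.(iv)]{cscp-2020} to deduce stability from the finite-type property of the limit set. Your route is shorter and entirely self-contained within the paper: a single application of Theorem~\ref{t:intro-descending-property-alg-grp} to the descending sequence $(\tau^n(\Sigma))_{n\ge 0}$ yields stabilization, and both conclusions fall out immediately since $\Omega(\tau)=\tau^N(\Sigma)$ is already known to be of finite type from your inductive step. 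The trade-off is that the paper's approach illustrates how the finite-type property of $\Omega(\tau)$ alone forces stability (a fact of independent interest), whereas yours bypasses this by exploiting the full strength of the descending chain condition up front.
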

  
\begin{proof}
Theorem~\ref{t:sofic-alg-grp} shows that the iterated images $\tau^n(\Sigma)$ are algebraic group subshifts of $V^G$. 
Then Theorem~\ref{c:descending-property} on the intersection of algebraic group subshifts  implies that 
$ \Omega(\tau) \coloneqq \bigcap_{n \geq 1} \tau^n(\Sigma)$ is also an algebraic group subshift of $V^G$.  
Consequently, $\Omega(\tau) $ is 
an algebraic group subshift of finite type of $V^G$ by Theorem~\ref{t:intro-sft-property-alg-grp}. 
Hence, it follows from \cite[Theorem~1.3.(iv)]{cscp-2020} that $\tau$ must be stable. 
\end{proof} 

It turns out that the proofs of our main theorems can be applied verbatim to obtain 
generalizations (cf.~Section~\ref{s:generalization}) to the so called \emph{admissible group subshifts} 
(cf.~Definition~\ref{d:admissible-subgroup-shift}). 
For example, 
we obtain the following finiteness result (cf.~Theorem~\ref{t:artinian-general} and 
Example~\ref{example:canonical-admissible-for-intro}):   
\begin{theorem}
\label{t:artinian-intro} 
Let $G$ be a polycyclic-by-finite group. Let $A$ be an Artinian group (resp. an Artinian module).  
Then every closed subshift of $A^G$ which is also an abstract subgroup (resp. a submodule) 
is a subshift of finite type. 
\end{theorem}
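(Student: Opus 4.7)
The plan is to deduce the statement from the general finiteness result for admissible group subshifts (cf.~Definition~\ref{d:admissible-subgroup-shift}) established in Section~\ref{s:generalization}, namely Theorem~\ref{t:artinian-general}. It therefore suffices to verify that any closed $G$-invariant subgroup of $A^G$ is admissible whenever $A$ is Artinian, and then invoke that theorem.

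The verification of admissibility rests on two observations. First, if $A$ is Artinian then the finite direct power $A^E$ is again Artinian for every finite $E \subset G$, by the standard fact that a finite direct product of groups satisfying the descending chain condition on subgroups again satisfies DCC. Second, if $\Sigma \subset A^G$ is an abstract subgroup, then for every finite $E \subset G$ the restriction $\Sigma_E \subset A^E$ is the image of $\Sigma$ under the projection $A^G \to A^E$, which is a group homomorphism; hence $\Sigma_E$ is a subgroup of $A^E$, and consequently every descending chain of subgroups of $\Sigma_E$ terminates. Combined with the hypothesis that $\Sigma$ is closed in $A^G$ with respect to the prodiscrete topology, these facts place $\Sigma$ within the class of admissible group subshifts, in precise parallel with how Noetherianity of algebraic varieties makes $\Sigma_E \subset V^E$ admissible in the algebraic setting.

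With admissibility in hand, the admissible analogue of Theorem~\ref{t:intro-sft-property-alg-grp} applies and yields the conclusion directly. Unwinding the argument, one writes $G$ as an ascending union of finite sets $G = \bigcup_n E_n$ and forms the descending family of subshifts of finite type $\Sigma^{(n)} := \Sigma(A^G; E_n, \Sigma_{E_n})$, each a closed $G$-invariant subgroup of $A^G$ containing $\Sigma$; closedness of $\Sigma$ in the prodiscrete topology then gives $\Sigma = \bigcap_n \Sigma^{(n)}$, and the admissible analogue of Theorem~\ref{t:intro-descending-property-alg-grp} forces $\Sigma = \Sigma^{(N)}$ for some $N$, exhibiting $\Sigma$ as a subshift of finite type. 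The main obstacle is therefore not this reduction but the underlying descending chain property in the admissible setting; its proof should mirror the induction along the polycyclic series of $G$ used for algebraic group subshifts, with the role of Zariski-closedness of subgroups of $V^E$ played by the DCC on abstract subgroups of $A^E$ granted by the Artinianity of $A$.
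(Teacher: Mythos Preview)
Your proposal is correct and follows exactly the paper's approach: the paper deduces Theorem~\ref{t:artinian-intro} from the general Theorem~\ref{t:artinian-general} by observing (as in the examples following Definition~\ref{d:general-artinian-structure} and Definition~\ref{d:admissible-subgroup-shift}) that an Artinian group carries the canonical admissible Artinian structure given by all subgroups, so any closed $G$-invariant subgroup of $A^G$ is automatically an admissible group subshift. Your additional unwinding via the descending sequence $\Sigma^{(n)} = \Sigma(A^G; E_n, \Sigma_{E_n})$ is precisely the content of Proposition~\ref{p:markov-alg-admissible}.
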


Here, a group $\Gamma$ is \emph{Artinian} if every descending sequence of subgroups of $\Gamma$ eventually stabilizes. 
\par 
It is not known whether 
any of the above finiteness results still holds for some universe $G$ which is not a polycyclic-by-finite group. 
Note that if a group $G$ admits a non-finitely-generated subgroup, then 
there exist a finite group $A$ and a closed subshift $\Sigma \subset A^G$ 
which is also an abstract subgroup but $\Sigma$ is not a subshift of finite type  
(cf.~\cite{osin-02} and \cite{salo-2018-polycyclic}).  
\par 
The paper is organized as follows. 
Section~\ref{s:prelim} provides some   
lemmata on the window change of subshifts and recalls the machinery of inverse systems.  
In Section~\ref{s:rest-close}, we show that 
the closedness in the prodiscrete topology of a subset in the full shift 
is stable under restriction to arbitrary subsets under mild algebraic hypotheses (Lemma~\ref{l:restriction-closed},  
Lemma~\ref{l:restriction-closed-complete}). 
We then define in Section~\ref{s:alg-algr-shift} the class of \emph{algebraic subshifts} which generalizes   
 algebraic sofic subshifts introduced in \cite{cscp-2020}. 
 Proposition~\ref{p:alg-grp-sshift-subgroup} shows that algebraic group subshifts are automatically abstract subgroups 
 and all algebraic group sofic subshifts are algebraic group subshifts. 
Theorem~\ref{c:descending-property}   shows that arbitrary intersections of algebraic group subshifts are also algebraic group subshifts. \par 
We obtain in Section~\ref{s:markov-type-alg} the equivalence between the descending chain condition and the finite type property of algebraic group subshifts 
 (Proposition~\ref{c:markov-alg}). This leads us to extend the notion of \emph{groups of Markov type} in   \cite[Definition~4.1]{schmidt-book} to  introduce the class of \emph{groups of algebraic Markov type}  (Definition~\ref{d:markov-type-alg}) 
 which is stable by taking subgroups (Proposition~\ref{p:markov-subgroup}) 
 and by extensions by cyclic groups (Corollary~\ref{c:markov-group-extension}). 
 \par 
 Section~\ref{s:main} contains the main technical result Theorem~\ref{t:algebraic-group-Z-d} whose proof requires new ideas and techniques to overcome the absence of the compactness hypothesis on the alphabets. 
We prove in  Theorem~\ref{t:main-polycyclic-finite} that every polycyclic-by-finite group is of algebraic Markov type  
which implies 
both Theorem~\ref{t:intro-descending-property-alg-grp} and Theorem~\ref{t:intro-sft-property-alg-grp}. 
Section~\ref{s:applications} studies inverse images of homomorphisms of algebraic group subshifts and proves
 Theorem~\ref{c:intro-algebraic-group-abelian}. 
We then introduce \emph{admissible Artinian group structures} (Definition~\ref{d:general-artinian-structure}) that are groups 
equipped with a certain collection of subgroups satisfying the descending chain condition.  
A common generalization of the finiteness results in the 
Introduction is then obtained in Theorem~\ref{t:artinian-general} 
for the class of \emph{admissible group subshifts} whose alphabets are admissible Artinian group structures. \par 
Finally, Section~\ref{s:density} gives some applications on the density of periodic configurations for certain algebraic subshifts 
(Theorem~\ref{density-complete-z}) and for admissible group subshifts (Corollary~\ref{density-alg-grp-z}, Corollary~\ref{density-alg-grp-res-finite}). Another application of the techniques developed in this paper can be found in \cite{phung-2020-shadow} where it is shown that an intrinsic shadowing property is always satisfied for the valuation action of group cellular automata on an admissible group subshift. 
  
 \section{Preliminaries}
\label{s:prelim}

To fix the notations, suppose that $G$ is a group and let $E,F$ be subsets of $G$. 
Then we write $E F \coloneqq \{g h : g \in E, h \in F\}$. 
 
\subsection{Window change lemmata for subshifts}

For subshifts described by the formula \eqref{e:sft}, 
we have the following easy but useful observation. 

\begin{lemma}
\label{l:base-change-sft}
Let $G$ be a group and let $A$ be a set. 
 Let $D \subset G$ be a subset and let $P \subset A^D$. 
 Let $\Sigma= \Sigma(A^G; D,P) \subset A^G$.  
Then, for every subset $E \subset G$ such that $D \subset E$, we have 
$\Sigma= \Sigma(A^G; E, \Sigma_E)$. 
\end{lemma}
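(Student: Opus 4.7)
The plan is to prove the stated equality $\Sigma = \Sigma(A^G; E, \Sigma_E)$ by establishing the two inclusions directly from the defining formula \eqref{e:sft}, using only two very simple structural observations: first, that $\Sigma = \Sigma(A^G; D,P)$ is automatically $G$-invariant, and second, that for $D \subset E$ the restriction map $A^E \to A^D$ factors the restriction map $A^G \to A^D$, i.e.\ $y\vert_D = (y\vert_E)\vert_D$ for every $y \in A^G$.

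For the inclusion $\Sigma \subset \Sigma(A^G; E, \Sigma_E)$, I would first verify that $\Sigma$ is stable under the shift action: indeed, for $x \in \Sigma$ and $h \in G$, the configuration $hx$ satisfies $(g^{-1}(hx))\vert_D = ((h^{-1}g)^{-1}x)\vert_D \in P$ as $g$ runs through $G$, since $h^{-1}g$ also runs through $G$. Consequently, for any $x \in \Sigma$ and any $g \in G$ we have $g^{-1}x \in \Sigma$, so $(g^{-1}x)\vert_E \in \Sigma_E$ by definition of the restriction, which is exactly the condition for $x$ to lie in $\Sigma(A^G; E, \Sigma_E)$.

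For the reverse inclusion $\Sigma(A^G; E, \Sigma_E) \subset \Sigma$, I would take an arbitrary $x \in \Sigma(A^G; E, \Sigma_E)$ and check the defining condition $(g^{-1}x)\vert_D \in P$ for every $g \in G$. Fix $g$; then $(g^{-1}x)\vert_E \in \Sigma_E$ by hypothesis, so there exists some $y \in \Sigma$ with $y\vert_E = (g^{-1}x)\vert_E$. Since $D \subset E$, restricting further gives $y\vert_D = (g^{-1}x)\vert_D$. But $y \in \Sigma$ means in particular (taking the identity element in the definition \eqref{e:sft}) that $y\vert_D \in P$, hence $(g^{-1}x)\vert_D \in P$, as required.

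There is essentially no substantive obstacle here: the argument is a bookkeeping exercise in the definitions. The only point that requires mild care is the implicit use of $G$-invariance of $\Sigma$ in the first inclusion, and the observation that the defining window condition $(h^{-1}y)\vert_D \in P$ for a particular witness $y \in \Sigma$ is used only in the trivial case $h = 1_G$ in the second inclusion. The lemma therefore amounts to the remark that enlarging the window from $D$ to $E \supset D$ and replacing the allowed patterns $P$ by the \emph{actual} restriction $\Sigma_E$ does not change the subshift.
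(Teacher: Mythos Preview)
Your proof is correct and complete; it is precisely the standard two-inclusion argument that the paper alludes to by citing \cite[Lemma~5.1]{cscp-2020} rather than spelling out. There is nothing to add.
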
 

\begin{proof}
The proof is similar to \cite[Lemma~5.1]{cscp-2020}.  
\end{proof}

Now suppose that $H$ is a subgroup of a group $G$. 
Let $E \subset G$ be a subset  
such that $Hk_1\neq Hk_2$ for all $k_1, k_2 \in E$ such that $k_1 \neq k_2$.  
Let $A$ be a set. 
Denote $B \coloneqq A^E$. 
Then for every subset $F \subset H$, 
we have a  canonical bijection $B^F = A^{F   E}$ defined as follows.  
For every $x \in B^F$, we associate an element $y \in A^{F   E}$ given  
by $y(h k) \coloneqq (x(h))(k)$ for every $h \in F$ and $k \in E$. 
The obtained bijection is clearly functorial with respect to inclusions $ F \subset F'$ of subsets of $H$ in a trivial way. 
\begin{lemma}
\label{l:restriction-sft} 
With the above notations and hypotheses, let $D \subset H$ and let $P \subset A^{DE}=B^D$ be subsets. 
Then the following equality between subshifts of $A^G$ holds: 
\[
\Sigma(A^G; HE, \Sigma(B^H; D, P)) = \Sigma(A^G; DE, P). 
\]
\end{lemma}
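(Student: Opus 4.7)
The plan is to unwind both sides to a common condition on $x \in A^G$ by tracing through the definitions of the shift action, the subshift construction \eqref{e:sft}, and the canonical bijection $B^F = A^{FE}$ for $F \subset H$. The hypothesis that the right cosets $Hk$ for $k \in E$ are pairwise distinct ensures that the multiplication map $H \times E \to G$ is injective, so the identifications $A^{HE} \simeq B^H$ and $A^{DE} \simeq B^D$ are genuine bijections and can be applied without ambiguity.

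First I would expand the outer subshift: $x \in \Sigma(A^G; HE, \Sigma(B^H; D, P))$ means that for every $g \in G$, the restriction $(g^{-1}x)|_{HE} \in A^{HE}$ corresponds under the bijection to an element $\tilde{y}_g \in B^H$ belonging to $\Sigma(B^H; D, P)$. A direct computation from $(g^{-1}x)(h') = x(gh')$ gives $\tilde{y}_g(h)(k) = x(ghk)$ for all $h \in H$ and $k \in E$.

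Next I would unfold the inner condition: $\tilde{y}_g \in \Sigma(B^H; D, P)$ requires $(h^{-1}\tilde{y}_g)|_D \in P$ for every $h \in H$. Using the shift action on $B^H$ one finds $(h^{-1}\tilde{y}_g)(h')(k) = \tilde{y}_g(hh')(k) = x(ghh'k)$, so under the bijection $B^D \simeq A^{DE}$ this restriction is exactly the map $DE \to A$ sending $h'k \mapsto x(ghh'k)$, i.e.\ it is $((gh)^{-1}x)|_{DE}$.

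Finally, as $(g,h)$ ranges over $G \times H$ the product $gh$ ranges over all of $G$ (since $H \subset G$), and conversely every $g' \in G$ arises this way (take $h = 1$, $g = g'$). The combined conditions therefore collapse to requiring $(g'^{-1}x)|_{DE} \in P$ for every $g' \in G$, which by \eqref{e:sft} is precisely the statement that $x \in \Sigma(A^G; DE, P)$. The main obstacle is purely notational bookkeeping between the two shift actions and the canonical bijection; once the injectivity of $H \times E \to G$ is in hand, no substantive difficulty remains.
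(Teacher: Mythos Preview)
Your proof is correct and follows essentially the same approach as the paper's: both arguments unwind the definitions of the two nested subshift constructions and reduce membership in each side to the single condition $(g'^{-1}x)|_{DE}\in P$ for all $g'\in G$. The paper presents this as two separate inclusions while you run it as a chain of equivalences; your treatment is in fact more explicit about the inner $H$-shift (the paper's reverse inclusion only displays the case $h=1_H$ and implicitly absorbs the general $h$ into the outer $g$), but the underlying idea is identical.
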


\begin{proof} 
Let us denote $\Sigma \coloneqq \Sigma(A^G; H E, \Sigma(B^H; D, P))$.  
\par 
Let $x \in \Sigma$  and let $g \in G$. 
Then  we have $(g^{-1}x) \vert_{H E} \in \Sigma(B^H; D, P)$. 
Since $D   E \subset H   E$, 
we deduce from the canonical bijection $A^{DE} = B^D$ and the definition of $\Sigma(B^H; D, P)$ 
that $  (g^{-1}x) \vert_{D   E} \in  P$. 
Thus, we find that $\Sigma \subset \Sigma(A^G; DE, P)$. 
\par 
Conversely, let 
$x \in \Sigma(A^G; DE, P)$ and let $g \in G$. 
Then it follows that 
$(g^{-1}x)\vert_{DE} \in P$. 
Since $D  E \subset H E$, 
we have  
$((g^{-1}x)\vert_{H E})\vert_{D E} \in P$.  
Therefore, $(g^{-1}x)\vert_{H E} \in \Sigma(B^H; D, P)$. 
Hence, we deduce that 
$x \in \Sigma$ so that $\Sigma(A^G; DE, P) \subset \Sigma$ and the conclusion follows. 
\end{proof}

\subsection{Restriction, induction and closedness} 
Let $A$ be a set. 
Let $H$ be a subgroup of a countable group $G$ and let $\Lambda \subset V^H$ be a subshift. 
Let  
$\Sigma \coloneqq \Sigma (V^G ; H, \Lambda)$ (cf.~\eqref{e:sft}). 
We call $\Sigma$ the \emph{induction subshift} associated with $\Lambda$ and the subgroup $H$ of $G$.  
\par 
The following lemma will be used to prove Lemma~\ref{l:alg-shift-induction} and Proposition~\ref{p:markov-subgroup}.   

\begin{lemma}
\label{l:restrict-closed-subshift}
Let the notations and hypotheses be as above. 
Then we have $\Sigma_H = \Lambda$. Moreover,  
with respect to the prodiscrete topology, 
$\Sigma$ is closed in $A^G$ if and only if $\Lambda$ is closed in $A^H$. 
\end{lemma}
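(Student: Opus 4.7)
The plan is to pick a left transversal $T$ for $H$ in $G$ with $1\in T$, so that $G=\bigsqcup_{t\in T} tH$, and reduce everything to the resulting identification $A^G\cong \prod_{t\in T}A^H$. Concretely, for $x\in A^G$ I would define the $t$-slice $y_t\in A^H$ by $y_t(h):=x(th)$.

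The key observation is a slice criterion: $x\in \Sigma$ if and only if $y_t\in \Lambda$ for every $t\in T$. Indeed, writing an arbitrary $g\in G$ uniquely as $g=th_0$ with $t\in T$ and $h_0\in H$, a direct computation gives $(g^{-1}x)|_H(h)=x(th_0 h)=y_t(h_0 h)=(h_0^{-1}y_t)(h)$, so $(g^{-1}x)|_H=h_0^{-1}y_t$. Since $\Lambda$ is $H$-invariant, this lies in $\Lambda$ for all $h_0\in H$ if and only if $y_t\in \Lambda$, and as $g$ varies over $G$ the pair $(t,h_0)$ varies over $T\times H$, giving the criterion.

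From this, $\Sigma_H=\Lambda$ is immediate. For $\Sigma_H\subset\Lambda$, take $t=1$: any $x\in\Sigma$ satisfies $x|_H=y_1\in\Lambda$. Conversely, given $y\in\Lambda$, fix once and for all some $y_0\in\Lambda$ and define $x\in A^G$ by requiring $y_1=y$ and $y_t=y_0$ for $t\ne 1$; the slice criterion then forces $x\in\Sigma$, and $x|_H=y$. (If $\Lambda=\emptyset$ then $\Sigma=\emptyset$ as well, and both equivalences are vacuous.)

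For the topological statement, one direction is standard: each map $\pi_g\colon A^G\to A^H$, $x\mapsto (g^{-1}x)|_H$, is continuous for the prodiscrete topologies, and $\Sigma=\bigcap_{g\in G}\pi_g^{-1}(\Lambda)$ is then an intersection of closed sets. For the converse, I would upgrade the extension construction above into a continuous map $\iota\colon A^H\to A^G$ sending $y$ to the configuration with slices $y_1=y$ and $y_t=y_0$ for $t\ne 1$; continuity of $\iota$ holds because, for each fixed $g=th_0\in G$, the coordinate $\iota(y)(g)$ is either $y(h_0)$ (when $t=1$) or the constant $y_0(h_0)$. The slice criterion gives $\iota^{-1}(\Sigma)=\Lambda$, so $\Lambda$ inherits closedness from $\Sigma$. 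The argument is essentially bookkeeping; the one spot that needs a little care is getting the shift and the inverses right in the slice identity $(g^{-1}x)|_H=h_0^{-1}y_t$, on which everything else pivots.
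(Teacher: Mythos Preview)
Your proof is correct and follows essentially the same strategy as the paper: decompose $G$ into cosets of $H$ via a transversal, establish the slice criterion $x\in\Sigma \iff y_t\in\Lambda$ for all $t$, and read off both $\Sigma_H=\Lambda$ and the closedness equivalence from it.

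The one noteworthy difference is in the closedness argument. The paper works with sequences, using the standing hypothesis that $G$ is countable to conclude that sequential closedness suffices. Your argument instead writes $\Sigma=\bigcap_g \pi_g^{-1}(\Lambda)$ and $\Lambda=\iota^{-1}(\Sigma)$ for explicitly continuous maps $\pi_g$ and $\iota$, which is both cleaner and does not use countability of $G$. So your route is slightly more general on this point. (Minor caveat: your map $\iota$ is only defined when $\Lambda\neq\varnothing$; you handle the empty case separately, which is fine, though ``vacuous'' is not quite the right word---both $\Sigma_H=\Lambda$ and the closedness equivalence are simply trivially true when $\Lambda=\varnothing$.)
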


\begin{proof}
We have a canonical factorization 
$\Sigma = \prod_{c \in G/H} \Sigma_c$ 
where each $x \in \Sigma$ is identified with $(x\vert_c)_{c \in G/H} \in \prod_{c \in G/H} \Sigma_c$ (cf.~\cite[Section~2.5]{cscp-2020}). 
Recall  that $\Sigma_c \coloneqq \{ z\vert_c \colon z \in \Sigma \}$ 
for every left coset $c \in G/H$. 
\par
For every $c \in G/H$, we choose $g_c \in c \subset G$ 
and thus obtain a homeomorphism 
$\phi_c \colon \Sigma_c \to \Sigma_H$ given by
$\phi_c(y)(h) \coloneqq y(g_ch)$ for all $y \in \Sigma_c$. 
If $c=H$, we choose $g_c=1_G$ so that $\phi_H = \Id_{\Sigma_H}$.  
\par 
 By definition of $\Sigma$, we have $\Sigma_H \subset \Lambda$. 
 Conversely, let $z \in \Lambda$ then it is the restriction to $H$ of the configuration 
  $x = (x_c)_{c \in G/H} \in \Sigma$ defined by $x_c= g_c z$ for $c \in G/H$.   
 Thus  $\Lambda = \Sigma_H$ and $\phi_c(\Sigma_c)= \Lambda$ for every $c \in G/H$. 
 \par
 As $G$ is countable, there exists an increasing sequence $(E_n)_{n\geq 0}$ of finite subsets of $G$ 
 such that $G= \bigcup_{n \geq 0} E_n$. 
Then for every $c \in G/H$, the sets $E_n \cap c$ are finite and form an increasing sequence 
 whose union is $c$. 
 \par 
Suppose first that $\Lambda$ is closed in $A^H$. 
Let $(y_n)_{n \geq 0}$ be a sequence in $\Sigma$ which converges to some $z \in A^G$. 
Then for   $c \in G/H$, the sequence $(y_n\vert_c)_{n \geq 0}$ in $\Sigma_c$ converges to 
$z\vert_c \in A^c$. Since $\Lambda$ is closed, so is $\Sigma_c = \phi^{-1}(\Lambda)$ 
and thus $z\vert_c \in \Sigma_c$. Hence, $z=(z\vert_c)_{c \in G/H} \in \prod_{c \in G/H} \Sigma_c= \Sigma$ 
so $\Sigma$ is closed in $A^G$. 
\par 
Conversely, suppose that $\Sigma$ is closed in $A^G$. Let $(z_n)_{n \geq 0}$ be a sequence in $\Lambda$ which 
converges to some $z \in A^H$. For every $n \geq 0$, 
define $y_n  \in \Sigma$ by setting  $(y_n)_c= g_c z_n \in A^c$ for   $c \in G/H$. 
Let $y\in A^G$ be given by $y_c=g_c z \in A^c$  for all $c \in G/H$. 
Then it is clear that $(y_n)_{n \geq 0}$ converges to $y$ in $A^G$. Since $\Sigma$ is closed, it follows that $y \in \Sigma$ 
and thus $z = y_H \in \Sigma_H= \Lambda$. We conclude that $\Lambda$ is closed in $A^H$.   
\end{proof}

\subsection{Inverse limits of closed algebraic inverse systems} 
 Let $I$ be a directed set, i.e., a partially ordered set in which every pair of elements has an upper bound.
An \emph{inverse system} of sets  \emph{indexed} by  $I$ consists of a set $X_i$ for each $i \in I$ and 
a \emph{transition map} $\varphi_{ij} \colon X_j \to X_i$ for all $i,j \in I$ such that $i \prec j$. 
We require that transition maps satisfy the following compatibility conditions: 
\begin{align*}
 \varphi_{ii} &= \Id_{X_i} \quad  \text{  for all } i \in  I, \\
 \varphi_{ij} \circ \varphi_{jk}  
 & = \varphi_{ik}  \quad  \,\,\, \text{ for all $i,j,k \in I$ with } i \prec j \prec k. %\\
\end{align*}
When the index set and the transition maps are clear, 
we simply say an inverse system $(X_i)_{i \in I}$. 
 \par
The \emph{inverse limit} of an inverse system $(X_i)_{i \in I}$ with transition maps $\varphi_{ij} \colon X_j \to X_i$ 
is defined as the following subset of $\subset \prod_{i \in I} X_i$: 
\[
\varprojlim_{i \in I} (X_i,\varphi_{i j}) =  \varprojlim_{i \in I} X_i \subset \prod_{i \in I} X_i
\]
which consists of all $(x_i)_{i \in I}$ such that $\varphi_{i j}(x_j)= x_i$ for all $i \prec j$. 
 \par 
In this paper, we shall use repeatedly the following lemma which gives a sufficient condition for the nonemptiness 
 of an inverse system. 
\begin{lemma}
\label{l:inverse-limit-alg-grp}
Let $K$ be an  algebraically closed field.
Let $(X_i, f_{ij})$ be an  inverse system indexed by a countable directed set $I$, 
where    each $X_i$ is a nonempty  $K$-algebraic variety and each transition map  
$f_{i j} \colon X_j \to X_i$ is an   algebraic morphism
such that $f_{ij}(X_j) \subset X_i$ is a Zariski closed subset for all $i \prec j$. 
Then $\varprojlim_{i \in I} X_i \neq \varnothing$.
\end{lemma}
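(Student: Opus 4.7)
The plan is to reduce to a sequential inverse system and then run a Mittag-Leffler style argument powered by Noetherianity of the underlying topological spaces of algebraic varieties.

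First I would exploit countability of $I$ to extract a cofinal sequence $i_1 \prec i_2 \prec \cdots$: enumerate $I = \{j_k\}_{k \geq 1}$ and inductively choose $i_{n+1}$ to be a common upper bound of $i_n$ and $j_{n+1}$, which exists because $I$ is directed. Any compatible family over the subsystem $(X_{i_n})_{n \geq 1}$ extends uniquely to one over $(X_i)_{i \in I}$ via $x_j \coloneqq f_{j, i_n}(x_{i_n})$ for any $i_n \succ j$, with well-definedness following from cofinality and the compatibility relations. So it suffices to prove that $\varprojlim_{n} X_{i_n} \neq \varnothing$, and after relabeling I may assume $I = \N$ with transitions $f_{nm} \colon X_m \to X_n$ for $n \leq m$.

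Next comes the key input from algebraic geometry: each $X_n$, being a $K$-scheme of finite type, is a Noetherian topological space. For fixed $n$, the sets $f_{nm}(X_m) \subset X_n$ with $m \geq n$ form a descending chain of nonempty Zariski closed subsets (descending because $f_{nm}(X_m) = f_{nk}(f_{km}(X_m)) \subset f_{nk}(X_k)$ for $n \leq k \leq m$; closed by hypothesis; nonempty since $X_m \neq \varnothing$). Noetherianity forces this chain to stabilize, so there exists $m_0(n) \geq n$ such that
\[
Y_n \coloneqq f_{nm}(X_m) \subset X_n
\]
is the same nonempty closed set for all $m \geq m_0(n)$. I would then check that the transitions restrict to surjections $f_{nm} \colon Y_m \twoheadrightarrow Y_n$ for every $n \leq m$: choosing $k \geq \max(m_0(n), m_0(m), m)$ gives $Y_m = f_{mk}(X_k)$ and $Y_n = f_{nk}(X_k) = f_{nm}(f_{mk}(X_k)) = f_{nm}(Y_m)$. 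Finally I build a compatible sequence $(y_n)$ inductively: pick any $y_1 \in Y_1$, and given $y_n \in Y_n$, use surjectivity of $f_{n,n+1} \colon Y_{n+1} \to Y_n$ to lift to some $y_{n+1} \in Y_{n+1}$. This produces an element of $\varprojlim_n X_n$.

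The main obstacle is locating the right stable subsystem to work with, since the raw transitions $f_{nm}$ need not be surjective and offer no direct way to lift elements. Passing to the stabilized images $Y_n$ resolves this, but it rests essentially on both the Noetherianity of $X_n$ and the hypothesis that each $f_{ij}(X_j)$ is Zariski closed; without that closedness, one would only obtain a descending chain of constructible sets, which need not stabilize.
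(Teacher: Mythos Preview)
Your proposal is correct and follows essentially the same route as the argument the paper cites (and spells out in the analogous Lemma~\ref{l:inverse-limit-artinian-general}): reduce to a cofinal sequence, use Noetherianity to stabilize the descending chain of closed images into a universal subsystem $Y_n \coloneqq \bigcap_{m \geq n} f_{nm}(X_m)$ with surjective transitions, and then lift inductively.
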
 

\begin{proof}
The statement is proved in \cite[Proposition 4.2]{phung-2018}. 
\end{proof}

\subsection{Inverse limits and closed subsets in the prodiscrete topology}  

Let $G$ be a countable set and let $(E_n)_{n \geq 0}$ 
be an increasing sequence of finite subsets of $G$ such that $\bigcup_{n \geq 0} E_n = G$. 
Let $A$ be a set and let $\Sigma \subset A^G$ be a subset. 
For every $m \geq n \geq 0$, 
the inclusion $E_n \subset E_m$ induces a canonical projection $A^{E_m} \to A^{E_n}$ which in turns induces 
a well-defined map $\pi_{nm} \colon \Sigma_{E_m} \to \Sigma_{E_n}$. 
Thus, we obtain an inverse system $(\Sigma_{E_n})_{n \geq 0}$ with transition maps $\pi_{nm}$ for $m \geq n \geq 0$. 
We have the following useful approximation result (cf.~\cite[Section~4]{cscp-2020}): 

\begin{lemma} 
\label{l:closed-lim}
With the above notations, suppose that $\Sigma$ is closed in $A^G$ with respect to the prodiscrete topology. 
Then we have a canonical bijection 
\[
\varprojlim_{n \geq 0} \Sigma_{E_n} = \Sigma. 
\]
\end{lemma}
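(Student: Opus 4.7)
The plan is to exhibit the canonical map $\Phi \colon \Sigma \to \varprojlim_{n \geq 0} \Sigma_{E_n}$ given by $\Phi(x) \coloneqq (x\vert_{E_n})_{n \geq 0}$ and to verify that it is a bijection. The map is well-defined because for $m \geq n \geq 0$ the transition map $\pi_{nm}$ is precisely restriction from $E_m$ to $E_n$, and restrictions are compatible with inclusions $E_n \subset E_m$.

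Injectivity is the easy half: if $x, y \in \Sigma$ satisfy $x\vert_{E_n} = y\vert_{E_n}$ for every $n \geq 0$, then since every $g \in G$ belongs to some $E_n$ (using $\bigcup_{n \geq 0} E_n = G$), one gets $x(g) = y(g)$, hence $x = y$.

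For surjectivity, I would start from an arbitrary element $(x_n)_{n \geq 0} \in \varprojlim_{n \geq 0} \Sigma_{E_n}$ and first construct a candidate $x \in A^G$ pointwise: for $g \in G$ choose any $n$ with $g \in E_n$ and set $x(g) \coloneqq x_n(g)$; the compatibility condition $x_m\vert_{E_n} = x_n$ for $m \geq n$ makes this independent of the choice of $n$, and by construction $x\vert_{E_n} = x_n$ for every $n \geq 0$. The main obstacle, and the only place where the hypothesis really gets used, is to show that this $x$ actually lies in $\Sigma$ and not merely in $A^G$. For this I would invoke the definition $\Sigma_{E_n} = \{ z\vert_{E_n} : z \in \Sigma \}$ to choose, for each $n \geq 0$, a lift $y_n \in \Sigma$ with $y_n\vert_{E_n} = x_n = x\vert_{E_n}$.

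Finally, I would show that $(y_n)_{n \geq 0}$ converges to $x$ in the prodiscrete topology. Given a finite subset $F \subset G$, there exists $N$ with $F \subset E_N$; for all $n \geq N$ one has $y_n\vert_F = (y_n\vert_{E_n})\vert_F = x_n\vert_F = (x\vert_{E_n})\vert_F = x\vert_F$, which is exactly convergence in the product of discrete topologies. Because $\Sigma$ is closed in $A^G$ with respect to the prodiscrete topology by assumption, the limit $x$ lies in $\Sigma$, giving $\Phi(x) = (x_n)_{n \geq 0}$ and completing the proof. The whole argument is essentially a diagonal approximation, and the single nontrivial ingredient is the closedness of $\Sigma$, without which the pointwise-defined $x$ need not belong to $\Sigma$.
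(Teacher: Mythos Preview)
Your proof is correct and follows essentially the same approach as the paper: define the canonical restriction map, check it is injective because $\bigcup_n E_n = G$, and for surjectivity build $x \in A^G$ pointwise from a compatible family $(x_n)$ and use closedness of $\Sigma$ to conclude $x \in \Sigma$. The only cosmetic difference is that the paper argues directly that $x$ lies in the closure of $\Sigma$ (since $x\vert_{E_n} \in \Sigma_{E_n}$ and every finite subset of $G$ sits in some $E_n$), whereas you make this explicit by choosing lifts $y_n \in \Sigma$ and exhibiting convergence; these are equivalent formulations of the same step.
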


\begin{proof}
For every $x \in \Sigma$, we have $x_n \coloneqq x\vert_{E_n} \in \Sigma_{E_n}$ 
and we can clearly associate an element $(x_n)_{n \geq 0} \in \varprojlim_{n \geq 0} \Sigma_{E_n}$. 
\par 
Conversely, let $x = (x_n)_{n\geq 0} \in \varprojlim_{n \geq 0} \Sigma_{E_n}$. 
Then $x_n \in \Sigma_{E_n}$ for every $n \geq 0$. As the transition maps 
$\pi_{nm}$ are simply projection maps and as 
$\bigcup_{n \geq 0} E_n = G$, 
we can associate a well-defined configuration $x \in A^G$ given by $x(g) \coloneqq x_n(g)$ for every 
$g \in G$ and every $n \geq 0$ large enough such that $g \in E_n$. 
Thus $x\vert_{E_n} = x_n \in \Sigma_{E_n}$ for every $n \geq 0$. 
Since every finite subset of $G$ is contained in some set $E_n$ for some $n \geq 0$,  
we deduce that $x$ belongs to the closure of $\Sigma$ in $A^G$ with respect to the prodiscrete topology.  
The closedness of $\Sigma$ then implies that $x \in \Sigma$. 
\par
It is obvious that the above operations are mutually inverse. The proof is completed. 
\end{proof}

\section{Restriction and closedness property}
\label{s:rest-close}
In this section, we  show that under suitable algebraic assumptions, 
the closedness in the prodiscrete topology is stable under restriction to arbitrary subsets. 
\par 
Suppose that $G$ is a countable set.  
Let $A$ be a set and let $\Sigma$ 
be a closed subset of $A^G$ with 
respect to the prodiscrete topology.    
Let $K$ be an algebraically closed field.

\begin{lemma}
\label{l:restriction-closed} 
With the above notations and hypotheses,  suppose in addition that $\Sigma$ 
satisfies the following property: 
\begin{enumerate} [\rm (P)] 
\item 
for every finite subset $E \subset G$,  
$\Sigma_E$ is a $K$-algebraic group and for all finite subset $F \subset E$, 
the induced projection 
$\pi_{F,E}  \colon \Sigma_E \to \Sigma_F$ is a homomorphism of $K$-algebraic groups. 
\end{enumerate}
Then for every subset $H \subset G$, 
the restriction $\Sigma_H$ is also closed in $A^G$ with respect to the prodiscrete topology. 
\end{lemma}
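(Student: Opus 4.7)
The plan is to reduce the closedness of $\Sigma_H$ in $A^H$ (which is presumably what the statement intends, since $\Sigma_H \subset A^H$) to producing extensions of partial configurations, and then to obtain those extensions as elements of an inverse limit supplied by Lemma~\ref{l:inverse-limit-alg-grp}. First, since $G$ (and hence $H$) is countable, I would fix an increasing exhaustion $H = \bigcup_{n \geq 0} F_n$ by finite subsets and apply Lemma~\ref{l:closed-lim} to $H$: closedness of $\Sigma_H$ is then equivalent to the statement that every $x \in A^H$ with $x\vert_{F_n} \in (\Sigma_H)_{F_n} = \Sigma_{F_n}$ for all $n$ lies in $\Sigma_H$. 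Thus the concrete task is to extend such an $x$ to some $\tilde{x} \in \Sigma$ with $\tilde{x}\vert_H = x$.

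To produce this extension, I would extend $(F_n)$ to an increasing exhaustion $(E_n)_{n \geq 0}$ of $G$ with $F_n \subset E_n$, and for each $n$ consider
\[
Y_n := \pi_{F_n, E_n}^{-1}\bigl(x\vert_{F_n}\bigr) \subset \Sigma_{E_n}.
\]
By property (P), $\pi_{F_n, E_n} \colon \Sigma_{E_n} \to \Sigma_{F_n}$ is a surjective homomorphism of $K$-algebraic groups (surjectivity holds because $\Sigma_{F_n} = \pi_{F_n,E_n}(\Sigma_{E_n})$ by definition), so $Y_n$ is a nonempty coset of the algebraic subgroup $\ker(\pi_{F_n, E_n})$, hence a nonempty Zariski closed subvariety of $\Sigma_{E_n}$. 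Since $F_n \subset F_m$ whenever $n \leq m$, the restriction maps $\pi_{E_n, E_m} \colon \Sigma_{E_m} \to \Sigma_{E_n}$ send $Y_m$ into $Y_n$ and turn $(Y_n)_{n \geq 0}$ into an inverse system of nonempty $K$-algebraic varieties whose transition maps are algebraic morphisms.

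The main obstacle will be verifying the closed-image hypothesis required by Lemma~\ref{l:inverse-limit-alg-grp}. The image of $Y_m$ in $Y_n$ is a translate by any fixed $y_0 \in \pi_{E_n,E_m}(Y_m)$ of the image of the algebraic subgroup $\ker(\pi_{F_m, E_m})$ under the algebraic group homomorphism $\pi_{E_n, E_m}$. I would invoke the standard fact that the image of a homomorphism of algebraic groups over an algebraically closed field is Zariski closed (its Zariski closure is an algebraic subgroup containing its image as a constructible, hence somewhere dense, subgroup, forcing equality); translating by $y_0$ then yields the required closedness inside $\Sigma_{E_n}$, and a fortiori inside $Y_n$.

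Granted this, Lemma~\ref{l:inverse-limit-alg-grp} gives $\varprojlim_n Y_n \neq \varnothing$. Any element of this inverse limit sits inside $\varprojlim_n \Sigma_{E_n}$, and Lemma~\ref{l:closed-lim} applied to the closed subshift $\Sigma \subset A^G$ identifies it with a configuration $\tilde{x} \in \Sigma$. By construction $\tilde{x}\vert_{F_n} = x\vert_{F_n}$ for every $n$, and since $\bigcup_n F_n = H$ we conclude $\tilde{x}\vert_H = x$, so $x \in \Sigma_H$, as required.
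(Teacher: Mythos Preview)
Your proposal is correct and follows essentially the same approach as the paper's proof: both take an exhaustion $(E_n)$ of $G$ with $F_n = E_n \cap H$ exhausting $H$, form the fibers $Y_n = \pi_{F_n,E_n}^{-1}(x\vert_{F_n}) \subset \Sigma_{E_n}$ as cosets of algebraic subgroups, and apply Lemma~\ref{l:inverse-limit-alg-grp} and Lemma~\ref{l:closed-lim} to extract $\tilde{x} \in \Sigma$ restricting to $x$ on $H$. Your justification of the closed-image hypothesis (via the standard fact that images of algebraic group homomorphisms are Zariski closed) is in fact more explicit than the paper's, which simply asserts it from the coset structure.
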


\begin{proof} 
Since $G$ is countable, there exists  an increasing sequence  $(E_n)_{n \geq 0}$  
of finite subsets of $G$ such that $G= \bigcup_{n \geq 0} E_n$. 
For every $n \geq 0$, denote $F_n \coloneqq H \cap E_n$ then clearly $H= \bigcup_{n \geq 0} F_n$. 
\par 
Let $d \in A^G$ belong to the closure of $\Sigma_H$ in $A^H$ 
with respect to the prodiscrete topology. 
Then, for every $n \geq 0$, we have $d\vert_{E_n} \in \Sigma_{E_n}$. 
Hence, we obtain for every $n \geq 0$ a nonempty  subset of $\Sigma_{E_n}$ as follows: 
\[
Z_n \coloneqq \{ x \in \Sigma_{E_n} \colon x\vert_{F_n} = d\vert_{F_n}\}.   
\] 
It is clear that for every $m \geq n \geq 0$, the restriction of $\pi_{E_n, E_m}$ to $Z_{m}$
 induces a well-defined map  $p_{nm} \colon Z_m \to Z_n$. 
Since $\pi_{E_n, F_n}  \colon \Sigma_{E_n} \to \Sigma_{F_n}$ is a homomorphism of $K$-algebraic groups by the hypothesis (P),  
it follows that $Z_n = \pi_{E_n, F_n}^{-1} (d\vert_{F_n})$ is a translate of an algebraic subgroup of $\Sigma_{E_n}$. 
Therefore, for every $m \geq n \geq 0$, the transition map $p_{nm}$ of the inverse system $(Z_n)_{n \geq 0}$ 
 is a morphism of algebraic varieties and 
$p_{nm} (Z_m)$ is Zariski closed in $Z_n$. 
\par 
Hence, Lemma~\ref{l:inverse-limit-alg-grp} applied to the inverse system $(Z_n)_{n \geq 0}$ tells us 
that there exists $x \in \varprojlim_{n \geq 0} Z_n \subset A^G$. 
We infer from the definition of the sets $Z_n$ that $x\vert_{F_n} = d\vert_{F_n}$ for every $n \geq 0$. 
This shows that $x\vert_H = d \in A^H$. 
As $\Sigma$ is closed in $A^G$ and $\bigcup_{n \geq 0} E_n = G$, we find that 
 $\varprojlim_{n \geq 0} Z_n \subset \varprojlim_{n \geq 0} \Sigma_{E_n} = \Sigma$ 
 (cf.~Lemma~\ref{l:closed-lim}). 
It follows that $x \in \Sigma$. 
Since $d=x\vert_H$, we conclude that $d \in \Sigma_H$ and thus $\Sigma_H$ is closed in $A^H$ 
with respect to prodiscrete topology. 
\end{proof}

Using standard properties of complete algebraic varieties, 
the above proof can be easily adapted to show the following similar result for algebraic subshifts. 
 
\begin{lemma}
\label{l:restriction-closed-complete} 
With the above notations and hypotheses, assume moreover that $\Sigma$ 
satisfies the following condition: 
\begin{enumerate} [\rm (Q)] 
\item 
for every finite subset $E \subset G$,  
$\Sigma_E$ is a complete $K$-algebraic variety and for all finite subset $F \subset E$, 
the projection 
$\pi_{F,E}  \colon \Sigma_E \to \Sigma_F$ is a morphism of $K$-algebraic varieties. 
\end{enumerate}
Then for every subset $H \subset G$, 
the restriction $\Sigma_H$ is also closed in $A^G$ with respect to the prodiscrete topology. 
\qed
\end{lemma}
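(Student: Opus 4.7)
The plan is to follow the proof of Lemma~\ref{l:restriction-closed} verbatim, replacing the group-theoretic argument (the image of a morphism of algebraic groups is Zariski closed) by the defining property of complete algebraic varieties (any morphism from a complete variety is closed onto its image). The overall architecture — exhausting $G$ by finite subsets, defining a compatible family of fibres over $d$, and invoking Lemma~\ref{l:inverse-limit-alg-grp} — is unchanged.

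More concretely, since $G$ is countable, I would choose an increasing sequence $(E_n)_{n\geq 0}$ of finite subsets with $G = \bigcup_n E_n$, and set $F_n \coloneqq H \cap E_n$, so that $H = \bigcup_n F_n$. Given $d \in A^H$ in the closure of $\Sigma_H$, define
\[
Z_n \coloneqq \bigl\{ x \in \Sigma_{E_n} : x\vert_{F_n} = d\vert_{F_n} \bigr\}.
\]
Each $Z_n$ is nonempty, since $d\vert_{F_n}$ belongs to $\Sigma_{F_n}$ by the closure hypothesis on $d$, and any lift to $\Sigma_{E_n}$ lies in $Z_n$. Under hypothesis (Q), the projection $\pi_{F_n,E_n} \colon \Sigma_{E_n}\to \Sigma_{F_n}$ is a morphism of $K$-algebraic varieties, so $Z_n = \pi_{F_n,E_n}^{-1}(d\vert_{F_n})$ is Zariski closed in $\Sigma_{E_n}$.

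The key step, where the argument diverges from Lemma~\ref{l:restriction-closed}, is verifying the hypothesis of Lemma~\ref{l:inverse-limit-alg-grp} for the inverse system $(Z_n)$ with the transition maps $p_{nm} \colon Z_m \to Z_n$ induced by restriction. Since $\Sigma_{E_n}$ is a complete $K$-algebraic variety and $Z_n$ is a Zariski closed subset, $Z_n$ is itself a complete algebraic variety. The maps $p_{nm}$ are morphisms of algebraic varieties, and because $Z_m$ is complete, the image $p_{nm}(Z_m) \subset Z_n$ is Zariski closed — this is precisely the universally closed property of complete varieties, which plays the role that the closedness of images of algebraic group homomorphisms played in the previous lemma.

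Having verified all hypotheses, Lemma~\ref{l:inverse-limit-alg-grp} yields a point $x \in \varprojlim_n Z_n$; by construction $x\vert_{F_n} = d\vert_{F_n}$ for every $n$, so $x\vert_H = d$. On the other hand $x\vert_{E_n} \in \Sigma_{E_n}$ for all $n$, and by Lemma~\ref{l:closed-lim} together with the closedness of $\Sigma$ in the prodiscrete topology we conclude $x \in \Sigma$, hence $d = x\vert_H \in \Sigma_H$. This shows that $\Sigma_H$ is closed in $A^H$ with respect to the prodiscrete topology. The only genuine obstacle — ensuring the image-closedness needed by the inverse-limit lemma — is resolved immediately by the completeness assumption, so no additional technical input is required.
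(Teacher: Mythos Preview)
Your proposal is correct and follows precisely the approach the paper indicates: the paper gives no proof beyond the remark that ``the above proof can be easily adapted'' using ``standard properties of complete algebraic varieties,'' and you have carried out exactly that adaptation, replacing the closedness of images of algebraic-group homomorphisms by the closedness of images of morphisms out of complete varieties.
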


 \section{Algebraic and algebraic group subshifts} 
\label{s:alg-algr-shift}
We begin with an observation that all algebraic group subshifts (cf.~Definition~\ref{d:alg-subgroup-shift}) 
are automatically abstract subgroups. 
 
\begin{proposition}
\label{p:alg-grp-sshift-subgroup} 
Let $G$ be a countable group and let $V$ be an algebraic group over an algebraically closed field.  
Let $\Sigma$ be an algebraic group subshift of $V^G$. 
Then $\Sigma$ is an abstract subgroup of $V^G$. 
\end{proposition}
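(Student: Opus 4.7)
The plan is to leverage two ingredients built into Definition~\ref{d:alg-subgroup-shift}: every finite restriction $\Sigma_E$ is an algebraic subgroup of $V^E$ (hence, in particular, an abstract subgroup), and $\Sigma$ itself is closed in $V^G$ with respect to the prodiscrete topology. Together with the fact that the group law on $V^G$ is pointwise, these should force $\Sigma$ to be closed under multiplication, inversion, and to contain the identity configuration.

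First, I would observe that for every finite $E \subset G$ the restriction map $V^G \to V^E$ is an abstract group homomorphism, since the group structure on each side is the pointwise (product) structure coming from $V$. Consequently, for any $x, y \in \Sigma$ and any finite $E \subset G$, the configurations $xy$, $x^{-1}$, and the identity $1_{V^G}$ restrict on $E$ to $x|_E \cdot y|_E$, $(x|_E)^{-1}$, and $1_{V^E}$, respectively. Because $\Sigma_E$ is an abstract subgroup of $V^E$ containing $x|_E$ and $y|_E$, all three of these restrictions lie in $\Sigma_E$.

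Second, since $G$ is countable, I would fix an increasing exhaustion $(E_n)_{n \geq 0}$ of $G$ by finite subsets and invoke Lemma~\ref{l:closed-lim}, which identifies $\Sigma$ with the inverse limit $\varprojlim_{n \geq 0} \Sigma_{E_n}$ whose transition maps are the canonical projection homomorphisms. The families $\bigl((xy)|_{E_n}\bigr)_n$, $\bigl((x^{-1})|_{E_n}\bigr)_n$, and $(1_{V^{E_n}})_n$ built above are compatible under these projections (compatibility is automatic since they are restrictions of fixed elements of $V^G$), and by the previous paragraph each of their components lies in $\Sigma_{E_n}$. Hence they define elements of the inverse limit, i.e. of $\Sigma$, and these elements are precisely $xy$, $x^{-1}$, and $1_{V^G}$. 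This shows that $\Sigma$ is an abstract subgroup of $V^G$.

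There is no serious obstacle here; the proof is essentially formal bookkeeping reconciling pointwise group operations, restriction maps, and the inverse-limit description of prodiscrete-closed subsets of $V^G$. The only point to be careful about is that the algebraic group structure on $V^{E_n} = \prod_{g \in E_n} V$ is the product group structure, which coincides with the one induced pointwise from $V^G$; this ensures that the subgroup property of $\Sigma_{E_n}$ can be transported back to $\Sigma$ without friction.
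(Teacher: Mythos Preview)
Your proof is correct and follows essentially the same approach as the paper: exhaust $G$ by an increasing sequence of finite subsets $(E_n)$, use that each $\Sigma_{E_n}$ is an abstract subgroup of $V^{E_n}$, and then apply Lemma~\ref{l:closed-lim} to lift the subgroup property to $\Sigma$. The only cosmetic difference is that the paper checks the identity and the combination $x^{-1}y$ rather than treating $xy$, $x^{-1}$, and $1_{V^G}$ separately.
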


\begin{proof}
Since $G$ is countable, we can find an increasing sequence $(E_n)_{n \geq 0}$ 
consisting of finite subsets of $G$ such that $\bigcup_{n \geq 0} E_n = G$. 
\par
Let $\varepsilon \in V$ be the neutral element and we denote multiplicatively the group law of $V$.   
It follows from the definition algebraic group subshifts that for every $n \geq 0$,  the restriction 
$\Sigma_{E_n}$ is an algebraic subgroup of $V^G$. 
Hence, $\varepsilon^{E_n} \in \Sigma_{E_n}$ for every $n \geq 0$. 
Since $\Sigma$ is closed in $V^G$ with respect to the prodiscrete topology, 
we deduce immediately from Lemma~\ref{l:closed-lim} that 
\[
\varepsilon^{G} = (\varepsilon^{E_n})_{n \geq 0} \in \varprojlim_{n \geq 0} \Sigma_{E_n} = \Sigma .
\] 
\par 
Now let $x, y \in \Sigma$. Then clearly $(x^{-1}y)\vert_{E_n} = (x^{-1})\vert_{E_n} y\vert_{E_n}   \in \Sigma_{E_n}$ for every $n \geq 0$. 
Then as above, Lemma~\ref{l:closed-lim} also implies that $x^{-1}y \in \Sigma$. We conclude 
that $\Sigma$ is indeed an abstract subgroup of $V^G$. 
\end{proof}

The next lemma shows that inductions of algebraic group subshifts are also algebraic group subshifts. 

\begin{lemma}
\label{l:alg-shift-induction}
Let $H$ be a subgroup of a group $G$. 
Let $V$ be an algebraic group over an algebraically closed field 
and let $\Lambda$ be an algebraic group subshift of $V^H$. 
Then $\Sigma (V^G ; H, \Lambda)$ is   an algebraic group subshift of $V^G$. 
\end{lemma}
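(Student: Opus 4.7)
The plan is to verify the three defining conditions for $\Sigma \coloneqq \Sigma(V^G; H, \Lambda)$ to be an algebraic group subshift of $V^G$: $G$-invariance, closedness in the prodiscrete topology, and algebraicity of each finite restriction $\Sigma_E$. The $G$-invariance is immediate from \eqref{e:sft} because the shift action merely permutes the defining family of conditions $(g^{-1}x)\vert_H \in \Lambda$. For closedness, the map $\rho_g \colon V^G \to V^H$, $x \mapsto (g^{-1}x)\vert_H$, is continuous for each $g \in G$ in the prodiscrete topology, and since $\Lambda$ is closed in $V^H$, the intersection $\Sigma = \bigcap_{g \in G} \rho_g^{-1}(\Lambda)$ is closed in $V^G$.

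The main work lies in establishing that $\Sigma_E$ is an algebraic subgroup of $V^E$ for every finite $E \subset G$. I plan to adapt the coset decomposition used in the proof of Lemma~\ref{l:restrict-closed-subshift}: pick representatives $\{g_c\}_{c \in G/H}$ with $g_H = 1_G$, and for each coset $c$ consider the canonical bijection $\phi_c \colon V^c \to V^H$ defined by $\phi_c(y)(h) = y(g_c h)$. Writing an arbitrary $g \in c$ as $g = g_c h_0$ with $h_0 \in H$, a short computation yields $(g^{-1}x)\vert_H = h_0^{-1}\phi_c(x\vert_c)$ as elements of $V^H$; combined with the $H$-invariance of $\Lambda$, this shows that the condition $(g^{-1}x)\vert_H \in \Lambda$ depends only on $x\vert_c$ and is equivalent to $\phi_c(x\vert_c) \in \Lambda$. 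Hence, under the factorization $V^G = \prod_{c \in G/H} V^c$, one obtains $\Sigma = \prod_{c \in G/H} \phi_c^{-1}(\Lambda)$, and in particular $\Sigma_c = \phi_c^{-1}(\Lambda)$ for each coset.

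It then follows that for finite $E \subset G$, $\Sigma_E = \prod_{c \in G/H} (\Sigma_c)_{E \cap c}$ inside $V^E = \prod_c V^{E \cap c}$, with only finitely many factors being nontrivial since $E$ is finite. For each coset $c$ with $E \cap c \neq \emptyset$, the bijection $\phi_c$ restricts to a bijection $(\Sigma_c)_{E \cap c} \to \Lambda_{g_c^{-1}(E \cap c)}$. Since $g_c^{-1}(E \cap c)$ is a finite subset of $H$ and $\Lambda$ is an algebraic group subshift, $\Lambda_{g_c^{-1}(E \cap c)}$ is an algebraic subgroup of $V^{g_c^{-1}(E \cap c)}$. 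The underlying bijection of index sets $E \cap c \to g_c^{-1}(E \cap c)$, $e \mapsto g_c^{-1} e$, induces an algebraic group isomorphism $V^{E \cap c} \to V^{g_c^{-1}(E \cap c)}$ as a mere permutation of coordinates on a finite product of copies of $V$, through which $(\Sigma_c)_{E \cap c}$ inherits the structure of an algebraic subgroup of $V^{E \cap c}$. Taking the finite product over the relevant cosets then exhibits $\Sigma_E$ as an algebraic subgroup of $V^E$, completing the verification.

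The main obstacle is bookkeeping: carefully maintaining the coset-by-coset decomposition and checking that the reindexing bijections $\phi_c$ transport the algebraic group structure correctly on finite windows. Once the coset factorization of $\Sigma$ is in place, the conclusion follows without needing countability of $G$ or any inverse-system argument.
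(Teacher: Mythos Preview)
Your proof is correct and follows essentially the same coset-factorization strategy as the paper: decompose $V^G = \prod_{c \in G/H} V^c$, identify $\Sigma_c$ with $\Lambda$ via a coordinate reindexing, and then read off that each $\Sigma_E$ is a finite product of algebraic subgroups. The only notable difference is in the closedness step: the paper invokes Lemma~\ref{l:restrict-closed-subshift} (stated for countable $G$), whereas your direct argument via $\Sigma = \bigcap_{g \in G} \rho_g^{-1}(\Lambda)$ is cleaner and avoids any countability hypothesis on $G$.
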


\begin{proof} 
 With respect to the prodiscrete topology, as $\Lambda$ is closed in $A^H$, Lemma~\ref{l:restrict-closed-subshift} 
implies that $\Sigma$ is also closed in $A^G$. 
\par 
As in Lemma~\ref{l:restrict-closed-subshift}, we have $\Lambda = \Sigma_H$ and there is a canonical factorization 
\begin{equation} 
\label{e:subgroup-markov}
\Sigma = \prod_{c \in G/H} \Sigma_c, \quad \quad x \mapsto (x\vert_c)_{c \in G/H}.
\end{equation}
\par 
Let $E \subset G$ be a finite subset. Then \eqref{e:subgroup-markov} induces 
a factorization $\Sigma_E = \prod_{c \in G/H} \Sigma_{c\cap E}$. 
For each $c \in G/H$, choose $h_c \in G$ such that $h_c c= H$. 
Observe that 
\[
h_c^{-1} \Sigma_{c \cap E} = \Sigma_{h_c(c \cap E)} =  (\Sigma_H)_ {h_c(c \cap E)} 
= \Lambda_{h_c(c \cap E)} \subset V^{h_c(c \cap E)}. 
\] 
\par Since $h_c(c \cap E)$ is a finite subset of $H$ and since $\Lambda$ is an algebraic group subshift, 
we deduce that  
$ \Lambda_{h_c(c \cap E)}$ is an algebraic subgroup of $V^{h_c(c \cap E)}$ thus  
$\Lambda_{c \cap E}$ is also an algebraic subgroup of $V^{c \cap E}$.  
This implies that $\Sigma_E = \prod_{c \in G/H} \Sigma_{c\cap E}$ is indeed an algebraic  
subgroup of $V^E$. The conclusion follows. 
\end{proof}

\par 
The following results are direct generalizations  
of \cite[Theorem~10.1]{cscp-2020}, \cite[Theorem~8.1]{cscp-2020}, 
and  \cite[Theorem~7.1]{cscp-2020} 
in the context of algebraic group subshifts over a countable universe. 
 
\begin{theorem} 
\label{t:descending-alg-sft}
Let $G$ be a countable group. 
Let $V$ be an algebraic group over an algebraically closed field 
and let $\Sigma \subset V^G$ be a subshift. 
Consider the following properties: 
\begin{enumerate} [\rm (a)] 
\item 
$\Sigma$ is a subshift of finite type; 
\item 
$\Sigma$ is an algebraic group subshift of finite type; 
\item 
every descending sequence of algebraic group subshifts of $V^G$ 
\begin{equation*} 
\Sigma_0 \supset \Sigma_1 \supset \cdots \supset \Sigma_n \supset \Sigma_{n+1} \supset  \cdots  
\end{equation*} 
such that $\bigcap_{n \geq 0} \Sigma_n = \Sigma$ eventually stabilizes. 
\end{enumerate}
Then we have $\mathrm{(b)}$$\implies$$\mathrm{(a)}$$\implies$$\mathrm{(c)}$. 
Moreover, if $\Sigma \subset A^G$ is an algebraic group subshift, 
then $\mathrm{(a)}$$\iff$$\mathrm{(b)}$$\iff$$\mathrm{(c)}$. 
 \end{theorem}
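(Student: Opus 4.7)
The plan is to argue (b)$\Rightarrow$(a)$\Rightarrow$(c) in general, and then (c)$\Rightarrow$(b) under the extra assumption that $\Sigma$ is an algebraic group subshift. The implication (b)$\Rightarrow$(a) is immediate from Definition \ref{d:algr-sft-intro}, since an algebraic group subshift of finite type is an SFT with defining window the prescribed finite $D$ and pattern set the algebraic subgroup $W \subset V^D$.

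For (a)$\Rightarrow$(c), I would first invoke Lemma \ref{l:base-change-sft} to rewrite $\Sigma = \Sigma(V^G; D, \Sigma_D)$ with $D$ the given finite window, then exploit Noetherianity of the algebraic variety $V^D$ to see that the descending chain $((\Sigma_n)_D)_n$ of algebraic subgroups of $V^D$ stabilizes at some $W = (\Sigma_N)_D$. The crux is then to prove $W = \Sigma_D$, for this will force $\Sigma_n \subset \Sigma(V^G; D, W) = \Sigma$ and hence $\Sigma_n = \Sigma$ for $n \geq N$. To prove the nontrivial inclusion $W \subset \Sigma_D$, I would pick $w \in W$ and construct a configuration $y \in \Sigma$ with $y|_D = w$ via Lemma \ref{l:inverse-limit-alg-grp}. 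Specifically, with an exhaustion $D \subset E_0 \subset E_1 \subset \cdots$ of $G$ by finite sets, set
$$Z_{n,k} := \{z \in (\Sigma_n)_{E_k} : z|_D = w\},$$
each being a nonempty coset of an algebraic subgroup of $V^{E_k}$ and thus Zariski closed. Holding $k$ fixed, the sequence $(Z_{n,k})_n$ is descending in the Noetherian $V^{E_k}$, so stabilizes to some nonempty algebraic coset $Z_k$. The restriction maps $V^{E_{k+1}} \to V^{E_k}$ are group homomorphisms, so each induced $Z_{k+1} \to Z_k$ has Zariski-closed image, since images of algebraic group homomorphisms are closed and $Z_k$ is a translate of such an image. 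Lemma \ref{l:inverse-limit-alg-grp} then produces $(y_k) \in \varprojlim_k Z_k$, which by Lemma \ref{l:closed-lim} assembles into $y \in \bigcap_n \Sigma_n = \Sigma$ satisfying $y|_D = w$, as required.

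For (c)$\Rightarrow$(b), fix an exhaustion $(E_n)$ of $G$ by finite sets and set $\Sigma^{(n)} := \Sigma(V^G; E_n, \Sigma_{E_n})$. Each $\Sigma^{(n)}$ is an algebraic group SFT by construction (since $\Sigma_{E_n}$ is an algebraic subgroup of $V^{E_n}$), and a short finite-thickening verification shows it is also an algebraic group subshift: for any finite $F \subset G$, choosing $F' \supset F$ large enough that every $G$-translate of $E_n$ meeting $F$ sits inside $F'$ realizes $(\Sigma^{(n)})_F$ as the image under an algebraic restriction homomorphism of a finite intersection of pullbacks of $\Sigma_{E_n}$, hence as an algebraic subgroup of $V^F$. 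The sequence $(\Sigma^{(n)})$ is descending, contains $\Sigma$, and intersects to $\Sigma$: any $x$ lying in every $\Sigma^{(n)}$ satisfies $x|_{E_n} \in \Sigma_{E_n}$ for all $n$, hence lies in the prodiscrete closure of $\Sigma$, which equals $\Sigma$ by closedness. Hypothesis (c) then forces stabilization at some $\Sigma^{(N)} = \Sigma = \Sigma(V^G; E_N, \Sigma_{E_N})$, exhibiting $\Sigma$ as an algebraic group SFT.

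The main obstacle will be the inverse limit argument in (a)$\Rightarrow$(c): the algebraic-coset structure of the fibers $Z_{n,k}$ must be tracked carefully to verify the closed-image hypothesis of Lemma \ref{l:inverse-limit-alg-grp}, using in a crucial way that images of homomorphisms of algebraic groups are Zariski closed. A secondary bookkeeping point is the verification in (c)$\Rightarrow$(b) that the SFT approximants $\Sigma^{(n)}$ are genuine algebraic group subshifts, which reduces to the same kind of projection/intersection considerations for finite-coordinate algebraic subgroups.
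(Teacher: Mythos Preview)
Your arguments for (b)$\Rightarrow$(a) and (a)$\Rightarrow$(c) are correct and match the approach the paper borrows from \cite[Theorem~10.1]{cscp-2020}: reduce to the window $D$, stabilize $((\Sigma_n)_D)_n$ by Noetherianity, and run an inverse--limit argument on the coset fibers $Z_{n,k}$ to identify the stabilized value with $\Sigma_D$.

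The gap is in (c)$\Rightarrow$(b), in the ``short finite-thickening verification'' that $\Sigma^{(n)}=\Sigma(V^G;E_n,\Sigma_{E_n})$ is an algebraic group subshift. Your claim is that for finite $F$ and a suitable finite $F'\supset F$, the restriction $(\Sigma^{(n)})_F$ equals $\pi_F(W')$ where $W'\subset V^{F'}$ is the finite intersection of pullbacks of $\Sigma_{E_n}$ over the translates meeting $F$. Only the inclusion $(\Sigma^{(n)})_F\subset\pi_F(W')$ is automatic; the reverse inclusion amounts to extending a locally admissible pattern to a global one, and this fails for subgroup-defined SFTs in general. For instance, with $G=\Z$, $V=\Z/4\Z$, $E_n=\{0,1\}$ and $W=\langle(1,2)\rangle=\{(0,0),(1,2),(2,0),(3,2)\}\subset V^2$, one checks that $\Sigma(V^\Z;E_n,W)=\{0^\Z\}$, so its restriction to $\{0\}$ is $\{0\}$; yet $\pi_{\{0\}}(W')=\pi_2(W)\cap\pi_1(W)=\{0,2\}$. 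Thus a single thickening does not compute $(\Sigma^{(n)})_F$.

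What is actually needed here is precisely the ``key local result'' the paper cites as \cite[Theorem~7.1]{cscp-2020}: one takes an exhaustion $F\subset F_0\subset F_1\subset\cdots$, lets $A_k\subset V^{F_k}$ be the intersection of \emph{all} pullback constraints with $gE_n\subset F_k$, and observes that $B_k\coloneqq\pi_F(A_k)$ is a descending chain of algebraic subgroups of $V^F$. Noetherianity gives $B\coloneqq\bigcap_k B_k=B_M$ for some $M$, and then the same inverse--limit argument you used in (a)$\Rightarrow$(c), applied to the nonempty coset fibers $\{u\in A_k:u|_F=z\}$ for $z\in B$, shows $(\Sigma^{(n)})_F=B$. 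So your overall strategy is right, but the verification that $\Sigma^{(n)}$ is an algebraic group subshift is not a one-step thickening; it requires the same Noetherian/inverse--limit machinery as the rest of the proof.
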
 

\begin{proof}
The prove is the same, \emph{mutatis mutandis}, as the proof given in \cite[Theorem~10.1]{cscp-2020}. 
First, one remarks   that the key local result \cite[Theorem~7.1]{cscp-2020} 
now becomes part of the definition of algebraic group subshifts. 
Second, the only  modification needed to weaken the finite generation condition on the group $G$ in \cite[Theorem~7.1]{cscp-2020} 
by the countability assumption on $G$ 
is the following.  
Since $G$ is countable, we can find an increasing sequence 
$(M_i)_{i \geq 0}$ of finite subsets of $G$ such that $1_G \in M_0$ and 
$\bigcup_{i \geq 0} M_i= G$. Then 
it suffices to replace everywhere the finite subset $M^i$ 
in the proof of \cite[Theorem~10.1]{cscp-2020} 
by the finite subset $M_i$ for every $i \geq 0$. 
\end{proof}

Likewise, the fundamental closed image property of algebraic cellular automaton also holds 
for endomorphisms of algebraic group subshifts. 

\begin{theorem}
\label{t:closed-image}
Let $G$ be a countable group. 
Let $U, V$ be algebraic groups over an algebraically closed field.  
Let $\Sigma$ be an algebraic group subshift of $U^G$ and let 
$\tau \in \Hom_{U,V, G\text{-algr}}(U^G, V^G)$.  
Then $\tau(\Sigma)$ is closed in $V^G$ with respect to the prodiscrete topology.   
\end{theorem}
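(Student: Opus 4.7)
The plan is to adapt the inverse-limit strategy employed in Lemma~\ref{l:restriction-closed} and Proposition~\ref{p:alg-grp-sshift-subgroup}. Fix a memory set $M$ of $\tau$ with local defining map $\mu \colon U^M \to V$, which is a homomorphism of $K$-algebraic groups. Since $G$ is countable, I will pick an increasing exhaustion $G = \bigcup_{n \geq 0} E_n$ by finite subsets and set $F_n \coloneqq E_n M$, so that $\tau(x)\vert_{E_n}$ is determined by $x\vert_{F_n}$. Given $y \in V^G$ in the prodiscrete closure of $\tau(\Sigma)$, the goal is to exhibit $x \in \Sigma$ with $\tau(x) = y$.

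The first step is to observe that $\tau$ descends, for each $n$, to a map
\[
\tau_n \colon \Sigma_{F_n} \longrightarrow V^{E_n}, \qquad \tau_n(u)(g) \coloneqq \mu\!\left((g^{-1}u)\vert_M\right),
\]
which is a homomorphism of $K$-algebraic groups: indeed $\Sigma_{F_n}$ is an algebraic subgroup of $U^{F_n}$ by the very definition of an algebraic group subshift, $\mu$ is an algebraic group homomorphism on each coordinate, and the relevant coordinate projections are algebraic group morphisms. For each $n$, the fiber $Z_n \coloneqq \tau_n^{-1}(y\vert_{E_n}) \subset \Sigma_{F_n}$ is nonempty, because $y\vert_{E_n}$ lies in $(\tau(\Sigma))_{E_n}$ by choice of $y$; as a coset of $\ker(\tau_n)$, it is a Zariski closed algebraic subvariety of $\Sigma_{F_n}$.

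Next, I will assemble the $Z_n$ into an inverse system under the restriction maps $\pi_{F_n,F_m} \colon \Sigma_{F_m} \to \Sigma_{F_n}$, which are themselves homomorphisms of $K$-algebraic groups. Compatibility $\tau_n \circ \pi_{F_n,F_m} = \pi_{E_n,E_m} \circ \tau_m$ shows that $\pi_{F_n,F_m}$ sends $Z_m$ into $Z_n$, and the image $\pi_{F_n,F_m}(Z_m)$ is a translate in $\Sigma_{F_n}$ of $\pi_{F_n,F_m}(\ker \tau_m)$. The step I expect to be the main technical hurdle is the Zariski closedness of this image in $Z_n$: this rests on the classical fact that the image of a homomorphism of algebraic groups is a closed algebraic subgroup of the target, applied to the restriction of $\pi_{F_n,F_m}$ to $\ker \tau_m$.

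Granting this closedness, Lemma~\ref{l:inverse-limit-alg-grp} applies to the inverse system $(Z_n, \pi_{F_n,F_m})_{n \geq 0}$ and yields a nonempty inverse limit. Any element of $\varprojlim_n Z_n$ assembles into a configuration $x \in U^G$ with $x\vert_{F_n} \in \Sigma_{F_n}$ for every $n$. Since $\bigcup_n F_n = G$ and $\Sigma$ is closed in the prodiscrete topology, Lemma~\ref{l:closed-lim} yields $x \in \Sigma$, while the condition $\tau_n(x\vert_{F_n}) = y\vert_{E_n}$ for every $n$ forces $\tau(x) = y$. Hence $y \in \tau(\Sigma)$, which establishes the claimed closedness.
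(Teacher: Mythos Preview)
Your argument is correct and is essentially the detailed write-up of the proof that the paper defers to \cite[Theorem~8.1]{cscp-2020}, with the countability modification already described in the paper's proof of Theorem~\ref{t:descending-alg-sft}. The maps you call $\tau_n$ are precisely the maps $\tau_{E_n}^{+}$ appearing later in the paper (Proposition~\ref{p:ker}), and your use of Lemma~\ref{l:inverse-limit-alg-grp} on the coset system $(Z_n)$ together with Lemma~\ref{l:closed-lim} matches the intended strategy.
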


\begin{proof} 
With a similar modification indicated in the proof of Theorem~\ref{t:descending-alg-sft}, 
the proof of Theorem~\ref{t:closed-image} is, \emph{mutatis mutandis}, the same as the proof of \cite[Theorem~8.1]{cscp-2020}. 
We leave the details to the readers.  
 \end{proof}

A similar and straightforward extension to countable universe of \cite[Theorem~7.1]{cscp-2020} 
together with Theorem~\ref{t:closed-image} give us the following result. 

\begin{theorem}
\label{t:sofic-alg-grp}
Let $G$ be a countable group and let $V$ be an algebraic group over an algebraically field. 
Suppose that $\Sigma$ is an algebraic group sofic subshift of $V^G$. 
Then $\Sigma$ is an algebraic group subshift of $V^G$. 
\qed
\end{theorem}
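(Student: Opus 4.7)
\medskip

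\noindent\textbf{Proof plan.} By hypothesis $\Sigma=\tau(\Sigma')$ where $\Sigma'\subset U^G$ is an algebraic group subshift of finite type (for some algebraic group $U$ over the same algebraically closed field) and $\tau\colon U^G\to V^G$ is an algebraic group cellular automaton. To show that $\Sigma$ is an algebraic group subshift of $V^G$ in the sense of Definition~\ref{d:alg-subgroup-shift}, I would verify the three required properties in turn: $G$-invariance, closedness in the prodiscrete topology, and that every finite restriction $\Sigma_E$ is an algebraic subgroup of $V^E$.

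The $G$-invariance of $\Sigma$ is immediate from the $G$-equivariance of $\tau$ and the $G$-invariance of $\Sigma'$. For the prodiscrete closedness, I would apply Theorem~\ref{t:closed-image}: since every algebraic group subshift of finite type is in particular an algebraic group subshift (this is tautological from the definitions), the hypothesis of that theorem is satisfied by $\Sigma'$, and we conclude that $\tau(\Sigma')=\Sigma$ is closed in $V^G$ with respect to the prodiscrete topology.

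The remaining point, and the substantive one, is the local algebraic structure. Fix a finite subset $E\subset G$. Let $M\subset G$ be a memory set of $\tau$ and let $\mu\colon U^M\to V$ be the associated local defining map, which is a homomorphism of $K$-algebraic groups by hypothesis. The local property \eqref{e;local-property} shows that $\tau(x)\vert_E$ depends only on $x\vert_{EM}$; explicitly, the assignments $x\vert_{EM}\mapsto \mu((g^{-1}x)\vert_M)$ for $g\in E$ fit together into a well-defined homomorphism of $K$-algebraic groups
\[
\tau_E\colon U^{EM}\longrightarrow V^E,
\]
given componentwise by $|E|$ compositions of $\mu$ with coordinate projections. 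By construction $\Sigma_E=\tau_E(\Sigma'_{EM})$. Since $\Sigma'$ is an algebraic group subshift of finite type (hence an algebraic group subshift), $\Sigma'_{EM}$ is an algebraic subgroup of $U^{EM}$. Invoking the standard fact that the image of a homomorphism of algebraic groups is a (Zariski closed) algebraic subgroup of the target, we conclude that $\Sigma_E$ is an algebraic subgroup of $V^E$.

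The main obstacle, which the author delegates to a \emph{mutatis mutandis} extension of \cite[Theorem~7.1]{cscp-2020}, is really the prodiscrete closedness in the non-finitely generated setting, already handled by Theorem~\ref{t:closed-image}; the identification $\Sigma_E=\tau_E(\Sigma'_{EM})$ together with the closed image property for algebraic group homomorphisms takes care of the local algebraic subgroup condition with no further work. Combining the three verifications yields Definition~\ref{d:alg-subgroup-shift} for $\Sigma$, completing the proof.
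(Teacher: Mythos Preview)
Your overall strategy is the paper's: closedness via Theorem~\ref{t:closed-image}, local algebraicity via the extension of \cite[Theorem~7.1]{cscp-2020}. The decomposition $\Sigma_E=\tau_E(\Sigma'_{EM})$ together with the closed-image property of algebraic group homomorphisms is exactly the right mechanism for the local step.

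There is, however, one genuine misstatement. You assert that ``every algebraic group subshift of finite type is in particular an algebraic group subshift (this is tautological from the definitions)''. It is not. If $\Sigma'=\Sigma(U^G;D,W)$ with $W\subset U^D$ an algebraic subgroup, then for an arbitrary finite window $F\subset G$ the restriction $\Sigma'_{F}$ is a projection to $U^F$ of an \emph{infinite} intersection $\bigcap_{g\in G}\{x:(g^{-1}x)\vert_D\in W\}$; showing that this projection is Zariski closed in $U^F$ requires an inverse-limit argument (in the spirit of Lemma~\ref{l:inverse-limit-alg-grp}) and is precisely the content of \cite[Theorem~7.1]{cscp-2020}. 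So both of your uses of ``$\Sigma'$ is an algebraic group subshift''---once to meet the hypothesis of Theorem~\ref{t:closed-image}, once to know $\Sigma'_{EM}$ is an algebraic subgroup---rest on that cited result, not on the definitions alone.

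Relatedly, your final paragraph misidentifies what \cite[Theorem~7.1]{cscp-2020} does: it supplies the local algebraic-subgroup property, not prodiscrete closedness (the latter is \cite[Theorem~8.1]{cscp-2020}, extended here as Theorem~\ref{t:closed-image}). Once you replace ``tautological'' by an invocation of \cite[Theorem~7.1]{cscp-2020} for the SFT $\Sigma'$, your argument is complete and matches the paper's.
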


We now introduce \emph{algebraic subshifts} which 
will be necessary for the statement of Theorem~\ref{density-complete-z}. 
The definition is analogous to Definition \ref{d:alg-subgroup-shift} of algebraic group subshifts given in the Introduction. 

\begin{definition}
\label{d:algebraic-subgroup-shift} 
Let $G$ be a group and let $V$ be an algebraic variety over an algebraically closed field.  
A closed subshift $\Sigma \subset V^G$ is called an \emph{algebraic subshift} of $V^G$ 
if for every finite subset $E \subset G$, 
the restriction $\Sigma_E \subset V^E$ is an   algebraic subvariety.       
\end{definition}
\par 
It is clear that all algebraic group subshifts are algebraic subshifts. 
Moreover, it follows from \cite[Theorem~7.1]{cscp-2020} and Theorem~\ref{t:closed-image}
that whenever all the alphabets involved are complete algebraic varieties over the same algebraically closed field, 
all algebraic sofic subshift and thus all algebraic subshifts of finite type defined in \cite{cscp-2020} are  algebraic subshifts. 
As for algebraic group subshifts discussed above, the universe can be taken to be countable and not necessarily finite generated.  
\par 
Remark that there exists  algebraic subshifts which are not of finite type, e.g., the even subshift (cf.~\cite[Example~3.14]{kurka-book}).  
\par 
We have the following useful lemma which tells us that 
algebraic group subshifts and certain algebraic subshifts satisfy 
the properties (P) and (Q) introduced in  
Lemma~\ref{l:restriction-closed} and in Lemma~\ref{l:restriction-closed-complete}.   
\begin{lemma} 
\label{l:restriction-map-also-alg}
Let $G$ be a group and let $V$ be an algebraic group (resp. a complete algebraic variety) over an algebraically closed field.  
Let $\Sigma$ be an algebraic group subshift (resp. an algebraic subshift) of $V^G$. 
Let $E \subset F$ be finite subsets of $G$. 
Then the restriction to $\Sigma_F$ of the canonical projection $V^F \to V^E$ 
induces a well-defined homomorphism of algebraic groups (resp. morphism of algebraic varieties) 
$\Sigma_F \to \Sigma_E$. 
\end{lemma}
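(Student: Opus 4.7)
The plan is very short: the map in question is simply the restriction to $\Sigma_F$ of the coordinate projection $\pi_{E,F} \colon V^F \to V^E$, so essentially all that needs to be checked is (i) it lands in $\Sigma_E$, and (ii) the induced map to $\Sigma_E$ is a morphism in the appropriate category.

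First, I would verify well-definedness. By the very definition of the restriction $\Sigma_F = \{ x\vert_F \colon x \in \Sigma \}$, any element $u \in \Sigma_F$ is of the form $u = x\vert_F$ for some $x \in \Sigma$. Then $\pi_{E,F}(u) = u\vert_E = x\vert_E$, which lies in $\Sigma_E$. Hence $\pi_{E,F}(\Sigma_F) \subset \Sigma_E$, and we obtain a set-theoretic map $p \colon \Sigma_F \to \Sigma_E$.

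Next, I would observe that the projection $\pi_{E,F} \colon V^F \to V^E$ is a homomorphism of algebraic groups (respectively, a morphism of algebraic varieties) because $V^F = \prod_{g \in F} V$ is a direct product in the category of algebraic groups (resp.\ algebraic varieties) and $\pi_{E,F}$ is the canonical projection onto the factors indexed by $E$. By Definition~\ref{d:alg-subgroup-shift} (resp.\ Definition~\ref{d:algebraic-subgroup-shift}), the restriction $\Sigma_F \subset V^F$ is a closed algebraic subgroup (resp.\ a closed algebraic subvariety), so the composition $\Sigma_F \hookrightarrow V^F \xrightarrow{\pi_{E,F}} V^E$ is itself a homomorphism of algebraic groups (resp.\ a morphism of algebraic varieties).

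Finally, since $\Sigma_E$ is a closed algebraic subgroup (resp.\ closed algebraic subvariety) of $V^E$ and the above composition factors set-theoretically through $\Sigma_E$, the induced map $p \colon \Sigma_F \to \Sigma_E$ is again a homomorphism of algebraic groups (resp.\ a morphism of algebraic varieties); this is the standard fact that a morphism whose image lies in a closed subvariety factors as a morphism through that subvariety. There is no real obstacle here—the only subtlety worth mentioning is the use of the closed embedding $\Sigma_E \hookrightarrow V^E$, which is guaranteed by the very definition of an algebraic (group) subshift. This completes the plan.
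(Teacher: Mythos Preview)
Your proposal is correct and follows essentially the same straightforward verification as the paper, which declares the checks routine from the definitions. The only cosmetic difference is that the paper phrases the algebraic-group case via the remark that an abstract group homomorphism which is also a morphism of varieties is automatically a homomorphism of algebraic groups, whereas you obtain this directly by restricting the algebraic-group homomorphism $\pi_{E,F}$ to the closed subgroup $\Sigma_F$ and factoring through the closed subgroup $\Sigma_E$; the content is the same.
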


\begin{proof}
The verifications are straightforward using Definition~\ref{d:algebraic-subgroup-shift}.  
We only recall the fact that 
if a map between algebraic groups is a homomorphism of abstract groups which is also a morphism of algebraic varieties then 
it is actually a homomorphism of algebraic groups.  
\end{proof}

\section{Intersection of algebraic group subshifts} 
\label{s:intersection}
It is a straightforward verification that the intersection of finitely many algebraic group subshifts is also an algebraic group subshift. 
When the universe is a countable group, we will show that  the intersection of every descending 
sequence of algebraic group subshifts is again an algebraic group subshift. 

\begin{theorem}
\label{c:descending-property}
Let $G$ be a countable group. 
Let $V$ be an algebraic group over an algebraically closed field.  
Suppose that $(\Sigma_n)_{n \geq 0}$ is a descending sequence of algebraic group subshifts of $V^G$. 
Then $\Sigma \coloneqq \bigcap_{n \geq 0} \Sigma_n$ is an algebraic group subshift of $V^G$. 
\end{theorem}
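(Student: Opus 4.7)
The plan is to verify the three defining conditions of Definition~\ref{d:alg-subgroup-shift} for $\Sigma \coloneqq \bigcap_{n \geq 0} \Sigma_n$: $G$-invariance, closedness in the prodiscrete topology, and algebraicity of the restrictions $\Sigma_E$ for every finite $E \subset G$. The first two are immediate because an arbitrary intersection of $G$-invariant closed subsets remains $G$-invariant and closed, so the work lies entirely in the third condition.

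Fix a finite subset $E \subset G$. The sequence $((\Sigma_n)_E)_{n \geq 0}$ is a descending chain of algebraic subgroups of the algebraic group $V^E$, and algebraic subgroups of any $K$-algebraic group satisfy the descending chain condition (the underlying Zariski topology is Noetherian). Hence the sequence stabilizes and
$$
H_E \coloneqq \bigcap_{n \geq 0} (\Sigma_n)_E
$$
is an algebraic subgroup of $V^E$. The inclusion $\Sigma_E \subset H_E$ follows from $\Sigma \subset \Sigma_n$, so the heart of the proof reduces to the reverse inclusion $H_E \subset \Sigma_E$.

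To produce this inclusion, fix $x \in H_E$, write $G = \bigcup_{k \geq 0} E_k$ as an increasing union of finite subsets with $E \subset E_0$, and for every $n, k \geq 0$ set
$$
U_{n,k} \coloneqq \{ y \in (\Sigma_n)_{E_k} \colon y\vert_E = x \}.
$$
Since $x \in (\Sigma_n)_E$, each $U_{n,k}$ is nonempty; viewed as the fiber over $x$ of the algebraic group homomorphism $(\Sigma_n)_{E_k} \to (\Sigma_n)_E$ supplied by Lemma~\ref{l:restriction-map-also-alg}, it is a translate of an algebraic subgroup of $V^{E_k}$. For fixed $k$, the descending chain $((\Sigma_n)_{E_k})_n$ stabilizes, hence so does $(U_{n,k})_n$, and $U_k \coloneqq \bigcap_{n} U_{n,k}$ is a nonempty translate of an algebraic subgroup of $V^{E_k}$. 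For $k \leq k'$, the canonical projection $\pi_{k,k'} \colon V^{E_{k'}} \to V^{E_k}$ carries $U_{n,k'}$ into $U_{n,k}$ and hence $U_{k'}$ into $U_k$, so that $(U_k)_{k \geq 0}$ forms an inverse system of nonempty algebraic varieties with algebraic transition morphisms.

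The main obstacle is verifying the closed-image hypothesis required by Lemma~\ref{l:inverse-limit-alg-grp}: each $\pi_{k,k'}(U_{k'})$ must be Zariski closed in $U_k$. This follows from the standard fact that a homomorphism of $K$-algebraic groups has Zariski closed image, applied to the restriction of $\pi_{k,k'}$ to the algebraic subgroup underlying the coset $U_{k'}$; the image is then a translate of a Zariski closed subgroup of $V^{E_k}$. Lemma~\ref{l:inverse-limit-alg-grp} thus produces an element $y \in \varprojlim_k U_k$, i.e., a configuration $y \in V^G$ with $y\vert_{E_k} \in (\Sigma_n)_{E_k}$ for all $n, k$ and $y\vert_E = x$. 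Since each $\Sigma_n$ is closed in the prodiscrete topology, Lemma~\ref{l:closed-lim} gives $y \in \Sigma_n$ for every $n$, whence $y \in \Sigma$ and $x = y\vert_E \in \Sigma_E$. This yields $\Sigma_E = H_E$ and completes the verification that $\Sigma$ is an algebraic group subshift of $V^G$.
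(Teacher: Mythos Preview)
Your proof is correct and follows essentially the same strategy as the paper's: reduce to showing $\Sigma_E = \bigcap_{n \geq 0} (\Sigma_n)_E$, then for a given $x$ in the right-hand side build an inverse system of nonempty coset varieties and invoke Lemma~\ref{l:inverse-limit-alg-grp} to extend $x$ to a configuration in $\Sigma$. The only organizational difference is that the paper works directly with the doubly-indexed system $X_{ij} = \{y \in (\Sigma_j)_{M_i} : y\vert_E = x\}$ over $\mathbb{N}^2$, whereas you first collapse the $n$-direction at each level $k$ via Noetherianity to obtain a singly-indexed system $(U_k)_k$; this is a mild simplification but not a different idea.
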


\begin{proof} 
Since $\Sigma_n$ is a closed subshift of $V^G$ for all $n \geq 0$, 
so is the intersection $\Sigma$. 
Let $E \subset G$ be a finite subset. 
By definition,  $(\Sigma_n)_E \subset V^E$ is an algebraic subgroup  
for every $n \geq 0$. 
We clearly have $\Sigma_E \subset (\Sigma_n)_E$ for every $n \geq 0$ so   $\Sigma_E \subset \bigcap_{n \geq 0} (\Sigma_n)_E$. 
\par 
For the converse conclusion, let $z \in \bigcap_{n \geq 0} (\Sigma_n)_E$.  
Since $G$ is countable, 
we can find an increasing sequence  $(M_n)_{n \geq 0}$ of finite subsets of $G$ such that $\{1_G\} \in M_0$ 
and $\bigcup_{n \geq 0} M_n =G$. 
\par 
Consider the inverse system $(\Sigma_{ij})_{i,j \geq 1}$ defined by $\Sigma_{ij} \coloneqq (\Sigma_j)_{M_{i}} \subset V^{M_{i}}$ 
for every $i,j \geq 1$. 
Note that $\Sigma_{i,j+1} \subset \Sigma_{ij}$ since $\Sigma_{j+1} \subset \Sigma_j$  for all $i, j \geq 1$. 
Moreover, every $\Sigma_{ij}$ is an algebraic subgroup of $V^{M_{i}}$ since every $\Sigma_j$ is an 
 algebraic group subshift of $V^G$. 
 \par 
The \emph{unit horizontal transition maps} of the inverse system are the canonical homomorphisms of algebraic groups  
$p_{ij} \colon \Sigma_{i+1, j} \to \Sigma_{ij}$ defined by  
$p_{ij}(x)= x\vert_{M_{i}}$ for every $x \in \Sigma_{i+1, j}$ as $M_{i} \subset M_{i+1}$ 
(cf.~Lemma~\ref{l:restriction-map-also-alg}).  
The \emph{vertical unit transition maps} $q_{ij} \colon \Sigma_{i, j+1} \to \Sigma_{ij}$ are simply defined as the inclusion homomorphisms. 
\par 
The directed set $I = \{1, 2, \cdots\}^2$ is partially ordered by $(u,v) \prec (i,j)$ if and only if $u \leq i$ and $v \leq j$.  
The transition maps of the inverse system $(\Sigma_{ij})_{(i,j) \in I}$ 
are the compositions of the unit transition maps and they are well-defined as it can be checked 
easily that for all $(i,j) \in I$: 
\begin{align*}
q_{i j} \circ p_{i,j+1} = p_{i j} \circ q_{i+1,j} && \mbox{(see also \cite[Section~4]{cscp-2020}). }
\end{align*} 
\par  
The maps $p_{ij} \colon (\Sigma_j)_{M_{i+1}} \to (\Sigma_j)_{M_{i}}$ 
are the canonical homomorphisms of algebraic groups 
induced by the inclusions $M_{i} \subset M_{i+1}$ (Lemma~\ref{l:restriction-map-also-alg}).  
Likewise, the inclusion maps $q_{ij}$ are clearly homomorphisms of algebraic subgroups of $V^{M_{i}}$. 
We deduce that the transition maps of the inverse system $(\Sigma_{ij})_{(i,j) \in I}$ are   homomorphisms of algebraic groups. 
\par 
Consider the inverse subsystem $(X_{ij})_{(i,j) \in I}$ of $(\Sigma_{ij})_{(i,j) \in I}$ defined by 
\[ 
X_{ij} \coloneqq \{ x \in (\Sigma_j)_{M_i} \colon x\vert_E=z  \} \subset \Sigma_{ij}. 
\] 
Then by a similar argument as the proof of \eqref{e:y-k-y-translate} in Lemma~\ref{claim:1}, 
every $X_{ij}$ is a translate of an algebraic subgroup of $\Sigma_{ij}$ for every $(i,j) \in I$. 
Hence, it follows from the above paragraph that the transition maps of the inverse subsystem $(X_{ij})_{(i,j) \in I}$ have Zariski closed images.  
 \par 
Note that since $z \in \bigcap_{n \geq 0} (\Sigma_n)_E$, we have $z\vert_{M_{i}} \in (\Sigma_j)_{M_{i}}= X_{ij}$  
so that every $X_{ij}$ is nonempty for all $(i, j) \in I$. 
Therefore, Lemma~\ref{l:inverse-limit-alg-grp} applied to the 
inverse subsystem $(X_{ij})_{(i,j) \in I}$ tells us that    
$\varprojlim_{(i,j) \in I} X_{ij}$ is nonempty. 
\par 
For every fixed $j \geq 1$, 
Lemma~\ref{l:closed-lim} implies that  
\begin{equation} 
\label{e:descending-1} 
\varprojlim_{i \geq 1} X_{ij} \subset \varprojlim_{i \geq 1} \Sigma_{ij} = \Sigma_j. 
\end{equation}
Hence, we can easily deduce that   

\begin{align*}
\varprojlim_{(i,j) \in I} X_{ij} & = \varprojlim_{j \geq 1} \varprojlim_{i \geq 1} X_{ij}  && \mbox{(cf. also~\cite[relation~(4.7)]{cscp-2020})}\\
& \subset \varprojlim_{j \geq 1} \Sigma_{j}  && \mbox{(by \eqref{e:descending-1})}\\
& =  \bigcap_{j \geq 1} \Sigma_j && \mbox{(by basic properties of inverse limits)} \\
& \eqqcolon \Sigma && \mbox{(by definition of $\Sigma$)}. 
\end{align*}
It follows that there exists $y \in \Sigma$ such that 
$y \in \varprojlim_{(i,j) \in I} X_{ij}$.  
It is immediate from the construction that $y\vert_E = z$. 
Therefore, $\bigcap_{n \geq 0} (\Sigma_n)_E \subset  \Sigma_E$ and we have   
$\Sigma_E = \bigcap_{n \geq 1} (\Sigma_n)_E \subset V^E$.  
\par 
 Since $((\Sigma_n)_E)_{n \geq 1}$ is a descending sequence of algebraic subgroups of $V^E$, it eventually stabilizes 
 by the Noetherianity of the Zariski topology.  
 Hence,  $\Sigma_E $ is also an algebraic subgroup  of $V^E$. 
 This proves  that $\Sigma$ is indeed an algebraic group subshift of $V^G$. 
 The proof of Theorem~\ref{c:descending-property} is completed. 
 \end{proof}

\section{Groups of algebraic Markov type} 
\label{s:markov-type-alg}

In this section, the equivalence between the descending chain condition for the full shifts and the 
finite type property of algebraic group subshifts is established when the universe is countable (Proposition~\ref{c:markov-alg}). 
This leads us to define \emph{groups of algebraic Markov type} (Definition~\ref{d:markov-type-alg}) as an extension 
of the class of \emph{groups of Markov type} \cite[Definition~4.1]{schmidt-book}. 

\subsection{Descending chain condition and finite type property} 

 As an application of Theorem~\ref{c:descending-property}, we obtain the following result 
 similar to \cite[Theorem~3.8]{schmidt-book}: 

\begin{proposition} 
\label{c:markov-alg}
Let $G$ be a countable group and let $V$ be an algebraic group over an algebraically closed field. 
The following are equivalent: 
\begin{enumerate} [\rm (a)] 
\item 
every algebraic group subshift of $V^G$ is an algebraic group subshift of finite type;  
\item 
every descending sequence of algebraic group subshifts of $V^G$ eventually stabilizes.
\end{enumerate} 
 \end{proposition}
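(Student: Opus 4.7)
The plan is to deduce this equivalence directly from Theorem~\ref{t:descending-alg-sft} and Theorem~\ref{c:descending-property}, both of which were established earlier and are designed precisely to bridge the chain condition with the finite type condition.

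For the direction $\mathrm{(a)}\Rightarrow\mathrm{(b)}$, I would start with a descending sequence $(\Sigma_n)_{n\geq 0}$ of algebraic group subshifts of $V^G$ and form its intersection $\Sigma \coloneqq \bigcap_{n\geq 0}\Sigma_n$. By Theorem~\ref{c:descending-property}, $\Sigma$ is again an algebraic group subshift of $V^G$. Invoking hypothesis~(a), $\Sigma$ is an algebraic group subshift of finite type. Now I apply the implication $\mathrm{(b)}\Rightarrow\mathrm{(c)}$ of Theorem~\ref{t:descending-alg-sft} to $\Sigma$ (with the given descending sequence, whose intersection is by construction $\Sigma$) to conclude that the sequence eventually stabilizes.

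For the reverse direction $\mathrm{(b)}\Rightarrow\mathrm{(a)}$, let $\Sigma \subset V^G$ be an arbitrary algebraic group subshift. To show it is of finite type, I would verify condition~(c) of Theorem~\ref{t:descending-alg-sft} for $\Sigma$: every descending sequence of algebraic group subshifts of $V^G$ with intersection equal to $\Sigma$ eventually stabilizes. But this is immediate from hypothesis~(b), which already guarantees that every such descending sequence stabilizes, regardless of the intersection. The implication $\mathrm{(c)}\Rightarrow\mathrm{(b)}$ of Theorem~\ref{t:descending-alg-sft} (valid because $\Sigma$ is itself an algebraic group subshift) then yields that $\Sigma$ is an algebraic group subshift of finite type.

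Since both directions reduce to a single invocation of each of the preceding theorems, there is no genuine technical obstacle here; the content of the proposition is already encoded in Theorem~\ref{c:descending-property} (which ensures the intersection stays within the class of algebraic group subshifts, so the hypothesis of Theorem~\ref{t:descending-alg-sft} applies) combined with the two-way bridge between finite type and the stabilization of descending chains with a prescribed intersection furnished by Theorem~\ref{t:descending-alg-sft}. The only point worth emphasizing in the write-up is that the countability of $G$ is essential both for Theorem~\ref{c:descending-property} (which is used in the forward direction) and for the full equivalence in Theorem~\ref{t:descending-alg-sft} (used in the backward direction).
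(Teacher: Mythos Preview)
Your argument is correct. For the direction $\mathrm{(a)}\Rightarrow\mathrm{(b)}$ it coincides exactly with the paper's proof.

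For $\mathrm{(b)}\Rightarrow\mathrm{(a)}$ there is a small but genuine difference in presentation. You invoke the full equivalence in Theorem~\ref{t:descending-alg-sft} directly: since hypothesis~(b) forces every descending chain of algebraic group subshifts to stabilize, condition~(c) of that theorem holds trivially for any algebraic group subshift $\Sigma$, and the implication $\mathrm{(c)}\Rightarrow\mathrm{(b)}$ there (valid because $\Sigma$ is an algebraic group subshift) finishes. The paper instead constructs an explicit descending sequence $\Sigma_n \coloneqq \Sigma(V^G; E_n, \Sigma_{E_n})$ from an exhaustion $(E_n)$ of $G$, checks via Lemma~\ref{l:closed-lim} that $\bigcap_n \Sigma_n = \Sigma$, applies~(b) to obtain $\Sigma = \Sigma_N$ for some $N$, and then cites Theorem~\ref{t:descending-alg-sft} only at the very end. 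In effect the paper is re-deriving the content of $\mathrm{(c)}\Rightarrow\mathrm{(a)}$ in Theorem~\ref{t:descending-alg-sft} rather than quoting it; your route is shorter and avoids this duplication, while the paper's version has the modest advantage of being self-contained and making the role of the exhaustion visible.
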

 
 \begin{proof} 
Suppose first that (a) is verified. 
Let $(\Sigma_n)_{n \geq 0}$ be a descending sequence of algebraic group subshifts of $V^G$. 
Let $\Sigma \coloneqq \bigcap_{n \geq 0} \Sigma_n$. 
By Theorem~\ref{c:descending-property}, we know that $\Sigma$ is an algebraic group subshift of $V^G$. 
The condition (a) then implies that $\Sigma_n$ is an algebraic group subshift of finite type of $V^G$. 
By Theorem~\ref{t:descending-alg-sft}, it follows that the sequence $(\Sigma_n)_{n \geq 0}$ eventually stabilizes. 
This shows that (a)$\implies$(b). 
\par
Conversely, assume that (b) is satisfied. Let $\Sigma$ be an algebraic group subshift of $V^G$. 
As $G$ is countable, there exists an increasing sequence $(E_n)_{n \geq n}$ 
 of finite subsets of $G$ such that $G= \bigcup_{n \geq 0} E_n$. 
 \par 
Then for every $n \geq 0$, we find that $\Sigma_{E_n}$ is an algebraic subgroup of $V^{E_n}$ 
thus $\Sigma_n \coloneqq \Sigma(V^G; E_n, \Sigma_{E_n})$ is an algebraic group subshift of finite type of $V^G$. 
\par 
It is clear that $ \Sigma \subset \Sigma_{n+1} \subset \Sigma_n$ for every $n \geq 0$.  
In particular, $\Sigma \subset \bigcap_{n \geq 0} \Sigma_n$. 
On the other hand, let $z \in \bigcap_{n \geq 0} \Sigma_n$. 
Then by definition of the subshifts of finite type $\Sigma_n$, 
we have $z \vert_{E_n} \in \Sigma_{E_n}$ for every $n \geq 0$. 
By Lemma~\ref{l:restriction-map-also-alg}, 
we can regard $(\Sigma_{E_n})_{n \geq 0}$ as an inverse system whose transition maps are given by the canonical 
homomorphisms $\Sigma_{E_m} \to \Sigma_{E_n}$ for every $m \geq n \geq 0$. 
Since $\Sigma$ is closed, Lemma~\ref{l:closed-lim} implies that  
$z \in \varprojlim_{n \geq 0} \Sigma_{E_n} = \Sigma$ and hence 
$\bigcap_{n \geq 0} \Sigma_n \subset \Sigma$. 
\par 
We conclude that $\Sigma = \bigcap_{n \geq 0} \Sigma_n$. 
By Theorem~\ref{t:sofic-alg-grp}, we know that $\Sigma_n$ is an algebraic group subshift of $V^G$ for every $n \geq 0$.    
But then (b) implies that the descending sequence $(\Sigma_n)_{n \geq 0}$ 
must stabilize. Hence, there exists $N \geq 0$ 
such that  $\Sigma = \bigcap_{n \geq 0} \Sigma_n = \Sigma_N$. 
Finally, Theorem~\ref{t:descending-alg-sft} implies that $\Sigma_N$ is an algebraic group subshift of finite type of $V^G$ 
and thus so is $\Sigma$. Hence (b)$\implies$(a) and the proof is completed. 
 \end{proof}

\subsection{The class of groups of algebraic Markov type} 

By analogy with the definition of groups of Markov type given in  \cite[Definition~4.1]{schmidt-book},   
we introduce the class of \emph{groups of algebraic Markov type} as follows. 

\begin{definition}
\label{d:markov-type-alg}
A countable group $G$ said to be of \emph{algebraic Markov type} 
if for every algebraic group $V$ over an algebraically closed field, the full shift $V^G$ satisfies 
one of the equivalent conditions of Proposition~\ref{c:markov-alg}. 
\end{definition}
\par 
By the Noetherianity of the Zariski topology, it is clear from the above definition 
that every finite group is a group of algebraic Markov type. 
We are going to show  that the class of groups of algebraic Markov type is stable under taking subgroups. 
 
 \begin{proposition}
\label{p:markov-subgroup} 
Let $G$ be a   group of algebraic Markov type. 
Then every subgroup of $G$ is also a group of algebraic Markov type. 
\end{proposition}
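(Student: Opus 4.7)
The plan is to transfer a descending chain of algebraic group subshifts of $V^H$ upstairs to $V^G$ via the induction construction $\Sigma(V^G; H, \cdot)$, invoke the algebraic Markov property of $G$ to get stabilization upstairs, then push the chain back down using restriction to $H$. This is the natural analogue of the strategy used in \cite{schmidt-book} for groups of Markov type, and all the pieces needed for the algebraic analogue are already in hand.

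More concretely, let $V$ be an algebraic group over an algebraically closed field and let $H$ be a subgroup of $G$. Since $G$ is countable, so is $H$, which is the standing hypothesis for algebraic Markov type. By Proposition~\ref{c:markov-alg} it suffices to verify condition (b) for $V^H$: let
\[
\Lambda_0 \supset \Lambda_1 \supset \cdots \supset \Lambda_n \supset \Lambda_{n+1} \supset \cdots
\]
be a descending sequence of algebraic group subshifts of $V^H$, and set $\Sigma_n \coloneqq \Sigma(V^G; H, \Lambda_n) \subset V^G$ for each $n \geq 0$.

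Lemma~\ref{l:alg-shift-induction} guarantees that each $\Sigma_n$ is an algebraic group subshift of $V^G$. The chain $(\Sigma_n)_{n\geq 0}$ is descending: if $x \in \Sigma_{n+1}$ then $(g^{-1}x)\vert_H \in \Lambda_{n+1} \subset \Lambda_n$ for every $g \in G$, hence $x \in \Sigma_n$. Since $G$ is of algebraic Markov type, Proposition~\ref{c:markov-alg} applied to $V^G$ shows that there exists $N \geq 0$ such that $\Sigma_n = \Sigma_N$ for every $n \geq N$. Restricting to $H$ and using the identity $(\Sigma_n)_H = \Lambda_n$ furnished by Lemma~\ref{l:restrict-closed-subshift}, we obtain $\Lambda_n = \Lambda_N$ for every $n \geq N$.

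There is no genuine obstacle: the only substantive inputs are that induction sends algebraic group subshifts of $V^H$ to algebraic group subshifts of $V^G$ (Lemma~\ref{l:alg-shift-induction}) and that restriction to $H$ recovers the original subshift (Lemma~\ref{l:restrict-closed-subshift}); the rest is formal. Consequently $V^H$ satisfies condition (b) of Proposition~\ref{c:markov-alg}, and hence $H$ is a group of algebraic Markov type, completing the proof.
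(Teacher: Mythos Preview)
Your proof is correct and follows essentially the same approach as the paper's: induct each $\Lambda_n$ to $\Sigma_n = \Sigma(V^G; H, \Lambda_n)$ via Lemma~\ref{l:alg-shift-induction}, use the algebraic Markov property of $G$ to stabilize the $\Sigma_n$, and restrict back via $(\Sigma_n)_H = \Lambda_n$ from Lemma~\ref{l:restrict-closed-subshift}. The only difference is cosmetic---you spell out the descending-chain verification and the countability of $H$ explicitly, which the paper leaves implicit.
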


\begin{proof}
Let $H$ be a subgroup of $G$ and let $V$ be an algebraic group over an algebraically closed field.  
Let $(\Lambda_n)_{n \geq 0}$ be a descending sequence 
of algebraic group subshifts of $V^H$. 
\par 
For every $n \geq 0$, we consider the induction subshift $\Sigma_n \coloneqq \Sigma(V^G; H, \Lambda_n)$ 
which is an algebraic group subshift of $V^G$ by Lemma~\ref{l:alg-shift-induction}. 
Since $\Lambda_{n+1} \subset \Lambda_{n}$, 
it follows that $\Sigma_{n+1} \subset \Sigma_n$ for every $n \geq 0$. 
Hence, we obtain a descending sequence $(\Sigma_n)_{n \geq 0}$ 
of algebraic group subshifts of $V^G$. 
\par 
Since $G$ is a group of algebraic Markov type by hypothesis, 
the sequence $(\Sigma_n)_{n \geq 0}$ eventually stabilizes. 
Since $(\Sigma_n)_H= \Lambda_n$ for every $n \geq 0$ by Lemma~\ref{l:restrict-closed-subshift}, 
it follows that the sequence $( \Lambda_n)_{n \geq 0}$ must also stabilize. 
By Proposition~\ref{c:markov-alg} and Definition~\ref{d:markov-type-alg}, 
we can thus conclude that $H$ is a group of algebraic Markov type. 
\end{proof}

In Section~\ref{s:main}, we will see that the extensions of cyclic groups by groups of algebraic Markov type are     groups of algebraic Markov type 
(cf.~Theorem~\ref{t:algebraic-group-Z-d} and Proposition~\ref{p:main-by-finite}).

\section{Dynamical finiteness of algebraic group subshifts} 
\label{s:main} 

Our goal in this section is to give a proof of the following  result which is analogous to 
\cite[Theorem~4.2]{schmidt-book}.  

\begin{theorem}
\label{t:main-polycyclic-finite} 
Every polycyclic-by-finite group is of algebraic Markov type. 
\end{theorem}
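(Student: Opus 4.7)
The plan is to induct on the length $n$ of a subnormal series
\[
G = G_n \supset G_{n-1} \supset \cdots \supset G_0 = \{1\}
\]
with each $G_k/G_{k-1}$ cyclic, as in the definition of polycyclic-by-finite recalled in the Introduction. The base case $n=0$ is the trivial group, and this is of algebraic Markov type: for any algebraic group $V$, an algebraic group subshift of $V^{\{1\}}$ is simply an algebraic subgroup of $V$, so the descending chain condition of Proposition~\ref{c:markov-alg}~(b) follows from Noetherianity of the Zariski topology, exactly as remarked after Definition~\ref{d:markov-type-alg}.

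For the inductive step, suppose $G_{n-1}$ is of algebraic Markov type. Since $G_{n-1}$ is normal in $G$ and $G/G_{n-1}$ is cyclic, there are two subcases. If $G/G_{n-1}$ is finite cyclic, then $G$ is a finite-index extension of $G_{n-1}$ and I would invoke Proposition~\ref{p:main-by-finite}, the extension-by-finite-groups result announced in Section~\ref{s:main}. If $G/G_{n-1}\cong\mathbb{Z}$, then I would invoke the main technical Theorem~\ref{t:algebraic-group-Z-d}, stated in the paper as the assertion that extensions by $\mathbb{Z}$ of groups of algebraic Markov type are again of algebraic Markov type. In either subcase $G$ is of algebraic Markov type, closing the induction.

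Granting Theorem~\ref{t:algebraic-group-Z-d} and Proposition~\ref{p:main-by-finite}, this is essentially a one-line induction; the real content, and hence the expected main obstacle, is Theorem~\ref{t:algebraic-group-Z-d} itself. Given a descending sequence $(\Sigma_m)_{m \geq 0}$ of algebraic group subshifts of $V^G$ and a lift $t \in G$ of a generator of $G/G_{n-1}\cong\mathbb{Z}$, the natural strategy is to view the restrictions to the finite strips $S_i = \bigsqcup_{|k| \leq i} t^k G_{n-1}$ as algebraic group subshifts of $\bigl(V^{\{t^{-i},\ldots,t^{i}\}}\bigr)^{G_{n-1}}$ (using Lemma~\ref{l:restriction-sft} and Lemma~\ref{l:restrict-closed-subshift} to change the defining window from $G$ to $G_{n-1}$ with a fattened alphabet), and then organize them into a doubly-indexed inverse system in the spirit of Theorem~\ref{c:descending-property}. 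The inductive Markov property of $G_{n-1}$ should stabilize in the $m$-direction on each fixed strip, while the Noetherianity of the Zariski topology on the finite-dimensional restrictions should control the $i$-direction; the delicate point is the interaction of these two inverse-limit arguments, which must be combined and then transferred back to $V^G$ via Lemma~\ref{l:closed-lim} and Proposition~\ref{c:markov-alg}.
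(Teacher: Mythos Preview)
Your induction on the length of the subnormal series is correct and is exactly the paper's proof of Theorem~\ref{t:main-polycyclic-finite}: the paper also reduces immediately to Theorem~\ref{t:algebraic-group-Z-d} (the $\Z$-extension case) and Proposition~\ref{p:main-by-finite} (the finite-extension case), with the trivial group as base.

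Your closing sketch of how to attack Theorem~\ref{t:algebraic-group-Z-d} itself, however, diverges from what the paper actually does. You propose to verify condition~(b) of Proposition~\ref{c:markov-alg} by restricting a descending sequence $(\Sigma_m)$ to finite strips $S_i$ and running a doubly-indexed stabilization; as you yourself note, the delicate point is that the stabilization index $m(i)$ on each strip may grow with $i$, and nothing in your outline forces uniformity. The paper avoids this obstacle entirely by verifying condition~(a) instead: starting from a \emph{single} algebraic group subshift $\Sigma\subset V^G$, it introduces the auxiliary subshifts
\[
X_n \coloneqq \{\,x\vert_H : x\in\Sigma,\ x\vert_{\Phi(H\times\{-n,\dots,-1\})}=\varepsilon\,\}\subset V^H,
\]
shows that $(X_n)_{n\geq 1}$ is a descending sequence of algebraic group subshifts of $V^H$, applies the Markov property of $H$ once to get $X_n=X_N$ for all $n\geq N$, and uses this stabilization (via a coset-extension trick) to prove $\Sigma=\Sigma(V^G;\Phi(H\times I_N),\Sigma_{\Phi(H\times I_N)})$. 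A second application of the Markov property of $H$, now to $\Sigma_{\Phi(H\times I_N)}$ viewed over the fattened alphabet $V^{I_N}$, finishes the SFT conclusion. The payoff is that only a single descending sequence over $H$ is ever needed, so no uniformity issue arises.
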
 

By the definition of groups of algebraic Markov type, we deduce immediately the main theorems mentioned in the Introduction.   

\begin{proof}[Proof of Theorem~\ref{t:intro-descending-property-alg-grp} and Theorem~\ref{t:intro-sft-property-alg-grp}]
By Definition~\ref{d:markov-type-alg}, 
it is a direct consequence of Theorem~\ref{t:main-polycyclic-finite} and Proposition~\ref{c:markov-alg}. 
\end{proof}

The proof of Theorem~\ref{t:main-polycyclic-finite} will occupy the rest of the present section. 
It results from Theorem \ref{t:algebraic-group-Z-d} in Section~\ref{s:main-infinite-cyclic} 
and Proposition~\ref{p:main-by-finite} in  Section~\ref{s:main-finite-cyclic} below. 

\subsection{The case of infinite cyclic extension} 
\label{s:main-infinite-cyclic}
We will now prove the main technical result of the paper which is an extension 
of \cite[Lemma~4.4]{schmidt-book}. We remark that the proof of \cite[Lemma~4.4]{schmidt-book} 
relies in a crucial way on the compactness of the alphabets and thus the compactness of the induced Tychonoff topology on the full shifts. 
However, in our setting, the full shift is equipped with the prodiscrete topology and is never compact unless  when 
the underlying alphabet  is finite. 

\begin{theorem} 
\label{t:algebraic-group-Z-d} 
Let $0 \to H \to G  \xrightarrow{\varphi} \Z \to 0$ be an extension of countable groups. 
Suppose that $H$ is a group of algebraic Markov type. 
Then the group $G$ is also of algebraic Markov type. 
\end{theorem}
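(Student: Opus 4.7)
The strategy is to adapt the classical Kitchens--Schmidt argument for compact Lie group alphabets to the algebraic setting, substituting Lemma~\ref{l:inverse-limit-alg-grp} and Lemma~\ref{l:restriction-closed} for topological compactness. The goal is to verify condition (a) of Proposition~\ref{c:markov-alg}: every algebraic group subshift of $V^G$ is of finite type.

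Since $\Z$ is free, the extension splits, so I may write $G = H \rtimes \langle t \rangle$. For each $k \geq 0$, set $T_k := \{t^{-k}, \dots, t^k\}$ and $B_k := V^{T_k}$, an algebraic group. Because $H$ is normal in $G$, the set $HT_k \subset G$ is $H$-invariant under left multiplication and $G = \bigcup_k HT_k$. The canonical bijection $V^{HT_k} \cong B_k^H$ intertwines the natural $H$-actions. Given an algebraic group subshift $\Sigma \subset V^G$, restriction to $HT_k$ composed with this identification produces an $H$-invariant subset $\Lambda^{(k)} \subset B_k^H$. I would verify that $\Lambda^{(k)}$ is an algebraic group subshift of $B_k^H$: its closedness in the prodiscrete topology is a direct application of Lemma~\ref{l:restriction-closed}, whose property (P) holds for $\Sigma$ by Lemma~\ref{l:restriction-map-also-alg}; and for each finite $F \subset H$ the restriction $(\Lambda^{(k)})_F = \Sigma_{FT_k}$ is an algebraic subgroup of $V^{FT_k} = B_k^F$.

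Applying the algebraic Markov property of $H$ to each alphabet $B_k$, each $\Lambda^{(k)}$ is of finite type, so $\Lambda^{(k)} = \Sigma(B_k^H; D_k, W_k)$ for some finite $D_k \subset H$ and algebraic subgroup $W_k \subset B_k^{D_k}$. By the window-change Lemma~\ref{l:restriction-sft},
\[
Y^{(k)} := \Sigma(V^G; HT_k, \Lambda^{(k)}) = \Sigma(V^G; D_k T_k, W_k)
\]
is an algebraic group subshift of finite type of $V^G$. One can arrange $(Y^{(k)})_k$ to be a descending sequence containing $\Sigma$, and $\Sigma = \bigcap_k Y^{(k)}$ by the closedness of $\Sigma$ together with the exhaustion $G = \bigcup_k HT_k$.

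The main obstacle, which I expect to require the most care, is to show $Y^{(K)} = \Sigma$ for some $K$, giving $\Sigma$ of finite type. The approach I would pursue mirrors the classical case: analyze the successive surjections $\Lambda^{(k+1)} \twoheadrightarrow \Lambda^{(k)}$ via the decomposition $B_{k+1} = B_k \times V^2$ splitting off the outermost coordinates at $t^{\pm(k+1)}$, whose kernels are algebraic group subshifts of $(V \times V)^H$. Applying the algebraic Markov property of $H$ with alphabet $V \times V$ to these kernels, combined with the $t$-equivariance of $\Sigma$ and the Noetherianity of descending chains of algebraic subgroups, should yield a uniform $K$ at which the level-by-level data stabilizes, forcing $Y^{(K)} = \Sigma$. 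The delicate point is comparing the kernel subshifts coherently across levels, since each ambient group $(V \times V)^H$ corresponds to a shifted pair of coordinates related by the $\Z$-action on $G$.
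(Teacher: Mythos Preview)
Your framework---restricting $\Sigma$ to strips $HT_k$, recognizing each $\Lambda^{(k)} = \Sigma_{HT_k}$ as an algebraic group subshift of $B_k^H$ via Lemmas~\ref{l:restriction-closed} and~\ref{l:restriction-map-also-alg}, and converting to a finite window via Lemma~\ref{l:restriction-sft}---is sound and matches the paper's scaffolding, as does $\Sigma = \bigcap_k Y^{(k)}$. The genuine gap is precisely where you flag it, and your kernel strategy does not close it. The kernels $\ker(\Lambda^{(k+1)} \to \Lambda^{(k)})$, viewed in $(V \times V)^H$ at the outermost slices $t^{\pm(k+1)}$, do not form a descending chain in any fixed ambient space: each sits at a different pair of slices, and $t$-equivariance relates consecutive kernels only after a shift and a conjugation of the $H$-action, producing no natural containment. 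So the descending-chain condition on algebraic group subshifts of $(V\times V)^H$---which is all the Markov hypothesis on $H$ provides---has nothing to act on, while applying a descending-chain condition directly to $(Y^{(k)})$ in $V^G$ is circular, since that is exactly what you are trying to establish.

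The idea you are missing is a \emph{one-sided} descending sequence living in the fixed space $V^H$. For $n \geq 1$ set
\[
X_n := \{\, x|_H : x \in \Sigma,\ x|_{Ht^{-j}} = \varepsilon \text{ for } 1 \leq j \leq n \,\} \subset V^H.
\]
One verifies (and here Lemma~\ref{l:inverse-limit-alg-grp} substitutes for compactness, since the definition involves the global constraint $x \in \Sigma$) that each $X_n$ is an algebraic group subshift of $V^H$. The $X_n$ genuinely descend in $V^H$, so the Markov hypothesis on $H$ gives $X_n = X_N$ for all $n \geq N$. Unwinding, any configuration in $\Sigma$ vanishing on $N$ consecutive negative slices has the same slice-$0$ restriction as one vanishing on \emph{all} negative slices. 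That one-step extension property is exactly what drives an induction showing that every $y \in Y^{(N)}$ agrees with an element of $\Sigma$ on arbitrarily wide strips $H\{t^0,\dots,t^m\}$, after which closedness of $\Sigma$ gives $y \in \Sigma$. Your two-sided kernels do not yield this extension property.
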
 

\begin{proof} 
Let $V$ be an algebraic group over an algebraically closed field. 
Let $\Sigma \subset V^G$ be an algebraic group subshift. 
We must show that $\Sigma \subset V^G$ is an algebraic group subshift of finite type.   
 \par 
 We denote by $\varepsilon$  the neutral element of $V$. 
The group laws on $V$ is written multiplicatively. 
Let $0_H = 0_G$ be the neutral element of the groups $H$ and $G$ whose group laws are denoted 
additively as the group of integers $\Z$.  
\par 
Choose an arbitrary element $a \in G$ such that $\varphi(a)=1$. Such $a$ exists since $\varphi$ is surjective. 
Then we have a decomposition of $G$ into disjoint cosets of $H$ in $G$:  
\begin{align}
\label{e:decompostion-coset-z}
G = \coprod_{n \in \Z} Ha^n,
\end{align}
which defines a bijection $\Phi \colon H \times \Z \to G$ given by $\Phi((h,n))= h a^n$ 
for every $h \in H$ and $n \in \Z$. We regard $\Phi$ as a coordinate function for $G$. 
\par 
Since $H$ is countable, we can find an increasing sequence $(F_n)_{n \geq 1}$ 
of finite subsets of $H$ such that $0_H \in F_1$ and $H = \bigcup_{n \geq 1} F_n$. 
\par 
For every integer $n \geq 1$, let us denote $G_n \coloneqq  \{-n, \dots, 0 \} \subset \Z$  and 
$G_n^* \coloneqq \{-n, \dots, -1 \} \subset \Z$ and $I_n \coloneqq \{ -n, \cdots, n \} \subset \Z$.  
We define   
\begin{align}
\label{e:def-x-n}
X_n  \coloneqq \{x\vert_H \colon x \in \Sigma , \, x\vert_{\Phi(H \times G_n^*)} = \varepsilon^{\Phi(H \times G_n^*)}\} 
\subset   V^H.  
\end{align}
Hence, we can regard $X_n$ as a subset of $V^H$. 
It is straightforward from the definition that $X_{n+1} \subset X_n$ and  $X_n$ is $H$-invariant for every $n \geq 1$. 
\par 
For ease of reading, we divide the proof of Theorem~\ref{t:algebraic-group-Z-d} into several lemmata. 

\begin{lemma} 
\label{claim:1}
Let $E \subset H$ be a finite subset. Then $(X_n)_E$  
is an algebraic subgroup of $V^E$ for every $n \geq 1$. 
\end{lemma} 
\begin{proof} 
Fix an integer $n \geq 1$. Since $E$ is finite, there exists an integer $k_0 \geq n$ such that $E \subset F_{k_0}$.  
Consider the following subset $Y \subset \Sigma_E$ defined by:  
\begin{align}
\label{e:y-claim-1}
Y \coloneqq \bigcap_{k \geq k_0}  Y_k , \quad \mbox{where} \quad 
Y_k \coloneqq  \{ x\vert_E \colon x \in \Sigma, \, x\vert_{\Phi(F_k \times G_n^*)} = \varepsilon^{\Phi(F_k \times G_n^*)}  \}.  
\end{align}
\par 
Let $k \geq k_0$ be an integer. Since $\Sigma$ is an algebraic group subshift, 
$\Sigma_{\Phi(F_k \times G_n)}$ is an algebraic subgroup of $V^{\Phi(F_k \times G_n)}$. 
Let $\pi^* \colon \Sigma_{\Phi(F_k \times G_n)} \to \Sigma_{\Phi(F_k \times G_n^*)}$ 
and $\pi_E \colon \Sigma_{\Phi(F_k \times G_n)} \to \Sigma_E$  
be the canonical homomorphisms induced respectively by the inclusions 
$\Phi(F_k \times G_n^*) \subset \Phi(F_k \times G_n)$ and $E \subset \Phi(F_k \times G_n)$  
(cf.~Lemma~\ref{l:restriction-map-also-alg}).  
\par 
Then we find that $Y_k = \pi_E(\Ker(\pi^*))$ which is clearly an algebraic subgroup of $\Sigma_E$. 
Therefore, \eqref{e:y-claim-1} implies that 
$Y$ is the intersection of a descending sequence of algebraic subgroups of $\Sigma_E$ 
and thus of $V^E$.  
By the Noetherianity of the Zariski topology on $V^E$, it follows that 
$Y$ is an algebraic subgroup of $V^E$.  
\par 
 We are going to prove that  $(X_n)_E=Y$. 
The inclusion $(X_n)_E \subset Y$  
 is immediate. For the converse inclusion, let 
 $y \in Y$. We must show that there exists $x \in X_n$ such that $x\vert_E = y$.  
For every $k \geq k_0$,   consider the following subset of $\Sigma_{\Phi( F_k \times I_k)}$:   
\begin{align}
\label{e:y-k-y}
Y_k(y) \coloneqq  \{ x\vert_{\Phi( F_k \times I_k)} \colon x \in \Sigma, 
\, x\vert_E =  y, \, x\vert_{\Phi(F_k  \times G_n^*)} = \varepsilon^{\Phi(F_k \times G_n^*)} \}. 
\end{align} 
\par 
Since $y \in Y = \bigcap_{k \geq k_0}  Y_k$, we can find for every $k \geq k_0$  
a configuration $x_k \in \Sigma$ such that $y= x_k \vert_E$ and 
$x_k\vert_{\Phi(F_k \times G_n^*)} = \varepsilon^{\Phi(F_k \times G_n^*)}$. 
This shows that $x_k\vert_{\Phi(F_k \times I_k)} \in Y_k(y)$ for every $k \geq k_0$. 
\par 
For every integer $k \geq k_0$, consider the canonical  homomorphisms of algebraic groups 
$\psi_k^* \colon \Sigma_{\Phi(F_k\times I_k)} \to \Sigma_{\Phi(F_k \times G_n^*)}$ 
and $\phi_k \colon \Sigma_{\Phi(F_k\times I_k)} \to \Sigma_E$ (cf.~Lemma~\ref{l:restriction-map-also-alg}). 
Then it is not hard to see that 
\begin{align}
\label{e:y-k-y-translate}
Y_k(y) =  x_k\vert_{\Phi( F_k \times I_k)}  ( \Ker (\psi_k^*) \cap \Ker(\phi_k)).
\end{align}
\par 
Hence, we find that $Y_k(y)$ is a translate by $ x_k\vert_{\Phi( F_k \times I_k)} $ of the algebraic subgroup 
$\Ker (\psi_k^*) \cap \Ker(\phi_k)$ of $\Sigma_{\Phi(F_k\times I_k)}$ for every $k \geq k_0$.  
\par 
We have seen that the sequence $(Y_k(y))_{k \geq k_0}$ 
forms an inverse system of nonempty sets. The  
transition maps $Y_m(y) \to Y_k(y)$, 
where $m \geq k \geq k_0$, are the restrictions of the canonical homomorphisms  
$\Sigma_{\Phi(F_m\times I_m)} \to \Sigma_{\Phi(F_k\times I_k)}$ induced by the inclusions $\Phi(F_k \times I_k) \subset \Phi(F_m\times I_m)$. 
\par 
Since $Y_m(y)$ is a translate of an algebraic subgroup of $\Sigma_{\Phi(F_m \times I_m)}$, 
the transition maps $Y_m(y) \to Y_k(y)$ have Zariski closed images for all $m \geq k \geq k_0$. 
\par 
Therefore, Lemma~\ref{l:inverse-limit-alg-grp} implies that there exists 
$x \in \varprojlim_{k \geq k_0} Y_k(y)$. 
By the construction of the set $Y_k(y)$, we find that $x\vert_E = y$ 
and that $x\vert_{\Phi(F_k \times G_n^*)} = \varepsilon^{\Phi(F_k \times G_n^*)}$ for every $k \geq k_0$.   
\par 
Since $H = \bigcup_{k \geq k_0} F_k$, 
we deduce that 
$x\vert_{\Phi(H \times G_n^*)} = \varepsilon^{\Phi(H \times G_n^*)}$.  
\par 
Note that $ \varprojlim_{k \geq k_0} Y_k(y) \subset \varprojlim_{k \geq n} \Sigma_{\Phi(F_k\times I_k)}$ 
since $Y_k(y) \subset \Sigma_{\Phi(F_k\times _k)}$ for every $k \geq k_0$.  
On the other hand, $\varprojlim_{k \geq n} \Sigma_{\Phi(F_k\times I_k)} = \Sigma$ 
by the closedness of $\Sigma$ in $V^G$ with respect to the prodiscrete topology 
and as $\bigcup_{k \geq n} \Phi(F_k\times I_k)= G$ 
(cf.~Lemma~\ref{l:closed-lim}). 
It follows that $x \in \Sigma$. 
\par 
Hence, by definition of $X_n$ (cf.~\eqref{e:def-x-n})), 
it follows that $x \in X_n$. 
Since $x\vert_E = y$ as well, we deduce that $Y \subset (X_n)_E$.  
\par 
We can thus conclude that  $(X_n)_E = Y$ is an algebraic subgroup 
of $V^E$. This proves Lemma~\ref{claim:1}. 
\end{proof}

\begin{lemma}
\label{claim:2}
For every integer $n \geq 1$, the subshift $X_n$ is closed in $V^H$ 
with respect to the prodiscrete topology.  
\end{lemma} 
\begin{proof}
Let us fix an integer $n \geq 1$.  
 For every $k \geq n \geq 1$, let 
 \[
X_{nk} \coloneqq \{x\vert_{\Phi(F_k \times I_k)} \colon x \in \Sigma , \, x\vert_{\Phi(F_k \times G_n^*)} = \varepsilon^{\Phi(F_k \times G_n^*)}\} 
\subset \Sigma_{\Phi(F_k \times I_k)}. 
\]
In other words, $X_{nk}$ is the kernel of 
 the canonical homomorphism of algebraic groups $\Sigma_{\Phi(F_k \times I_k)} \to \Sigma_{\Phi(F_k \times G_n^*)}$ 
 (cf.~Lemma~\ref{l:restriction-map-also-alg}). 
It follows that $X_{nk}$ is  an algebraic subgroup  of $\Sigma_{\Phi(F_k \times I_k)}$ and thus of $V^{\Phi(F_k \times I_k)}$.  
\par
For all $m \geq k \geq n$, the inclusion $\Phi(F_k \times I_k) \subset \Phi(F_m \times I_m)$ 
induces a projection $\pi_{km} \colon V^{ \Phi(F_m \times I_m)} \subset V^{\Phi(F_k\times I_k)}$. 
If $x \in V^{ \Phi(F_m \times I_m)}$ satisfies $x\vert_{\Phi(F_m \times G_n^*)} = \varepsilon^{\Phi(F_m \times G_n^*)}$ 
then clearly  $\pi_{km}(x)\vert_{ \Phi(F_k \times G_n^*)} = \varepsilon^{\Phi(F_k \times G_n^*)}$. 
Hence, the restriction of $\pi_{km}$ to $X_{nm}$ defines a homomorphism of algebraic groups 
$p_{km} \colon X_{nm} \to X_{nk}$. 
\par 
We thus obtain an inverse system $(X_{nk})_{k \geq n}$ whose transition maps $p_{km}$ 
are homomorphisms of algebraic groups for $m \geq k \geq n$.   
\par 
Now suppose that $z \in V^H$ belongs to the closure of $X_n$ in $V^H$ with respect to the 
prodiscrete topology. 
Hence, by definition of $X_n$, there exists for each $k \geq n$ 
a configuration   
$y_k \in \Sigma$ such that $ y_k \vert_{F_k} = z\vert_{F_k}$ 
and that $y_k \vert_{\Phi(H \times G_n^*)} = \varepsilon^{\Phi(H \times G_n^*)}$. 
\par 
For every $k \geq n$, consider the following subset of $\Sigma_{\Phi(F_k\times I_k)}$: 
 \begin{equation}
 \label{e:x-n-k-1}
X_{nk}(z) \coloneqq \{ x\vert_{\Phi(F_k\times I_k)} \colon 
 x \in \Sigma , \, x\vert_{\Phi(F_k \times G_n^*)} = \varepsilon^{ \Phi(F_k \times G_n^*)}, \, 
 x\vert_{F_k} = z \vert_{F_k}  \}. 
\end{equation}
\par 
Observe that $y_k\vert_{\Phi(F_k\times I_k)} \in X_{nk}(z)$ for every $k \geq n$. 
A similar argument as in the proof of \eqref{e:y-k-y-translate} shows that $X_{nk}(z)$ is a translate of an algebraic subgroup of $\Sigma_{\Phi(F_k\times I_k)}$. 
\par 
Therefore, we obtain an inverse subsystem $(X_{nk}(z))_{k \geq n} $ of $(X_{nk})_{k \geq n}$ 
where every  $X_{nk}(z)$ is nonempty for $k \geq n$ and all the transition maps have Zariski closed images.  
Then, again by Lemma~\ref{l:inverse-limit-alg-grp},  
there exists $x \in \varprojlim_{k \geq n} X_{nk}(z) \subset \varprojlim_{k \geq n} \Sigma_{\Phi(F_k\times I_k)} = \Sigma$. 
By construction, we find that 
$x\vert_{\Phi(F_k \times G_n^*)} = \varepsilon^{\Phi(F_k \times G_n^*)}$ 
and that $x\vert_{F_k} = z \vert_{F_k}$ for every $k \geq n$. 
Thus, by letting $k \to \infty$, we obtain 
$x\vert_{\Phi(H \times G_n^*)} = \varepsilon^{\Phi(H \times G_n^*)}$ and 
$x\vert_H = z$. 
Hence, $z= x\vert_H  \in X_n$ and this proves that $X_n$ is closed  in $V^H$ 
with respect to the prodiscrete topology.  
\par 
The proof of Lemma~\ref{claim:2} is completed. 
 \end{proof}

Combining Lemma~\ref{claim:1},  Lemma~\ref{claim:2} and the observations made 
after the definition \eqref{e:def-x-n} of $X_n$, we obtain the following:  

\begin{lemma} 
\label{l:lemma-1}
The sequence $(X_n)_{n \geq 1}$ is a descending sequence of   algebraic group subshifts of $V^H$. 
\qed
\end{lemma}  

We can now complete the proof of Theorem~\ref{t:algebraic-group-Z-d} as follows. 
Since $H$ is of algebraic Markov type, 
Lemma~\ref{l:lemma-1}  and Theorem~\ref{t:descending-alg-sft} imply that the descending sequence $(X_n)_{n \geq 1}$ 
must stabilize and consist of algebraic group subshifts of finite type of $V^H$. 
In particular,  there exists $N \geq 1$ such that $X_n = X_N \eqqcolon X$ for every $n \geq N$. 
\par 
For the notations, we denote $\Z_{<n} = \{x \in \Z \colon x < n \}$  for every $n \in \Z$.  
Then we find that 
\begin{equation}
\label{e:proof-main-X} 
X = \bigcap_{n \geq 1} X_n 
= \{x\vert_H \colon x \in \Sigma , \, x\vert_{\Phi(H \times \Z_{<0})} = \varepsilon^{\Phi(H \times \Z_{<0})} \}.  
\end{equation} 
Remark that $X$ is nonempty since it clearly contains $0^H$. 
It follows immediately from \eqref{e:proof-main-X} that
\begin{align*}
L 
& \coloneqq 
\{ x \in \Sigma \colon x\vert_{\Phi(H \times \Z_{<0})} = \varepsilon^{\Phi(H\times  \Z_{<0})}, \, x\vert_H \in X \} \\
& = 
\{ x \in \Sigma \colon x\vert_{\Phi(H \times \Z_{<0})} = \varepsilon^{\Phi(H \times  \Z_{<0})} \}.  
\end{align*} 
For every $v \in X$, consider the following subset  of $\Sigma$  
\[
L(v) \coloneqq  \{ x \in \Sigma \colon x\vert_{ \Phi(H \times \Z_{<0})} = \varepsilon^{\Phi(H \times  \Z_{<0})}, \, 
x\vert_H = v \}. 
\]
Then by the definition of $L(v)$ and the relation $\eqref{e:proof-main-X}$, 
it is clear that $L(v)$ is nonempty for every $v \in X$. 
\par 
 Let $\Omega \coloneqq \Phi(H \times I_N) \subset G$. 
Consider the subshift 
$\Sigma' \coloneqq  \Sigma (V^G; \Omega, \Sigma_\Omega)$ of $V^G$ (see Definition \eqref{e:sft}). 
 It is clear that $\Sigma \subset \Sigma'$. 
We are going to prove the converse inclusion.  
\par 
 Let $y \in \Sigma'$ be a configuration.  
 Note that  $a  = \Phi( (0_H, 1) )\in G$ and let $g = (N+1)a = \Phi( (0_H, N+1) ) \in G$. 
 Then by definition of $\Sigma'$, there exists $z_0 , z_1 \in \Sigma$ 
such that $(z_0)\vert_\Omega = y\vert_\Omega$ 
and  $(z_1)\vert_{a  + \Omega} = y\vert_{a + \Omega}$. 
It follows that for $z = z_0(z_1)^{-1} \in \Sigma$, we have 
$z\vert_{\Phi(H \times \{1, \cdots, N \})} = \varepsilon^{ \Phi(H \times \{ 1, \cdots, N \} )}$. 
Hence,  $v \coloneqq ((-g)z)\vert_H \in X$. 
\par 
 Let $c \in L(v)$. 
Since the subshift $\Sigma$ is $G$-invariant, we have $gc \in \Sigma$ 
and the configuration $x \coloneqq (gc)^{-1}z_0 \in \Sigma$ satisfies 
\[
x\vert_{\Phi( H \times \{0, \cdots, N+1 \})} = y\vert_{ \Phi( H \times \{0, \cdots, N+1 \})}. 
\]  
An immediate induction on $m \geq 1$ by a similar argument shows that 
there exists a sequence $(x_m)_{m \geq 1} \subset \Sigma$ such that 
$x_m\vert_{\Phi( H \times \{0, \cdots, m \})}= y\vert_{\Phi(H \times \{0, \cdots, m \})}$ for every $m \geq 1$. 
\par 
Remark that any given finite subset of $G$ is contained in 
some translate of the sets $\Phi(H  \times \{0, \cdots, m\})$ for some $m \geq 1$. 
Consequently, the above paragraph shows  that $y$ belongs to the closure of $\Sigma$ in $V^G$ 
with respect to the prodiscrete topology.  
As $\Sigma$ is closed in $V^G$, it follows that $y \in \Sigma$. 
Therefore, $\Sigma' \subset \Sigma$ and we conclude that $\Sigma = \Sigma' \subset V^G$.  
\par 
We regard $\Sigma_\Omega$ as a subshift of $U^H$ with respect to the shift action given by the group $H$ 
with the alphabet 
$U \coloneqq V^{ \Phi(\{0_H\} \times I_N)}$ which is clearly an algebraic group. 
\par 
As $\Sigma$ is closed in $V^G$ with respect to the prodiscrete topology, 
it follows from Lemma~\ref{l:restriction-closed} 
that $\Sigma_\Omega$ is also closed in $U^H$. 
Moreover, as $\Sigma$ is an algebraic group subshift, we deduce that 
$(\Sigma_\Omega)_{\Phi( E \times I_N)} \subset U^E$ is an algebraic subgroup for every finite subset $E \subset H$. 
Therefore, $\Sigma_\Omega$ is an algebraic group subshift of $U^H$. 
\par 
Since $H$ is a group of algebraic Markov type, $\Sigma_\Omega$ is an algebraic group subshift of finite type of $U^H$. 
Hence,  there exists a finite subset $D \subset H$ such that 
$\Sigma_\Omega = \Sigma(U^H; D, P)$ 
where $P \coloneqq (\Sigma_\Omega)_{\Phi( D \times I_N)} = \Sigma_{ \Phi(D \times I_N)}$ 
is  an algebraic subgroup of $U^D=V^{\Phi(D \times I_N)}$.  
\par 
Finally, it is straightforward from the above that 
\begin{align*}
\Sigma & = \Sigma'= \Sigma(V^G; \Omega, \Sigma_\Omega) &&\\
& = \Sigma(V^G; \Phi(H \times I_N), \Sigma(U^H; D, P)) &&\\
& = \Sigma(V^G; \Phi(D \times I_N), P) &&\mbox{(by Lemma~\ref{l:restriction-sft})}.
\end{align*} 
Since $\Phi( D \times I_N)$ is finite and $P$ is an algebraic subgroup of $V^{\Phi( D \times I_N)}$, 
we conclude that $\Sigma$ is an algebraic group subshift of finite type of $V^G$.  
\par 
The proof  of Theorem~\ref{t:algebraic-group-Z-d} is complete. 
\end{proof}

\subsection{The case of extension by finite groups} 
\label{s:main-finite-cyclic} 

The following proposition is a direct application of Lemma~\ref{l:restriction-sft}.  

\begin{proposition}
\label{p:main-by-finite} 
 Let $0 \to H \to G  \xrightarrow{\varphi} F \to 0$ be an extension of countable groups. 
Suppose that $H$ is a group of algebraic Markov type and $F$ is finite. 
Then the group $G$ is also of algebraic Markov type. 
 \end{proposition}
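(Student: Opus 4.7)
The plan is to reduce the problem directly to the algebraic Markov type property of $H$ by repackaging $V^G$ as a full shift over $H$ with a larger algebraic group alphabet. Since $F$ is finite, I would first choose a finite transversal $E = \{a_1,\dots,a_n\} \subset G$ for $H$ in $G$, so that $G = HE$ is the disjoint union of the cosets $Ha_i$ and in particular the $Hk$ for $k \in E$ are pairwise distinct, as in the setup of Lemma~\ref{l:restriction-sft}. Setting $B \coloneqq V^E$, which is again a $K$-algebraic group under coordinatewise operations, the canonical bijection recalled just before Lemma~\ref{l:restriction-sft} identifies $B^H$ with $V^{HE} = V^G$. This identification is simultaneously a group isomorphism, a homeomorphism for the two prodiscrete topologies, and an intertwining of the $H$-shift action on $B^H$ with the restriction to $H$ of the $G$-shift action on $V^G$.

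Next, given an algebraic group subshift $\Sigma \subset V^G$, I would verify that, viewed inside $B^H$, it is an algebraic group subshift of $B^H$ in the sense of Definition~\ref{d:alg-subgroup-shift} applied to the alphabet $B$. It is $H$-invariant because it is $G$-invariant, and closed in the prodiscrete topology of $B^H$ by the homeomorphism above. For any finite $D \subset H$, the set $DE \subset G$ is finite, and the restricted bijection $B^D = V^{DE}$ is an isomorphism of $K$-algebraic groups under which $\Sigma_D$ corresponds to $\Sigma_{DE}$; the latter is an algebraic subgroup of $V^{DE}$ by hypothesis on $\Sigma$, so $\Sigma_D$ is an algebraic subgroup of $B^D$.

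Since $H$ is of algebraic Markov type, Proposition~\ref{c:markov-alg} applied to the full shift $B^H$ then produces a finite $D \subset H$ and an algebraic subgroup $P \subset B^D$ with $\Sigma = \Sigma(B^H; D, P)$ as subsets of $B^H$. To transport this description back to $V^G$, I would invoke Lemma~\ref{l:restriction-sft}, which yields
\[
\Sigma(V^G;\, HE,\, \Sigma(B^H; D, P)) = \Sigma(V^G;\, DE,\, P).
\]
The left-hand side equals $\Sigma$ itself: because $HE = G$, the defining condition reduces to $g^{-1}x \in \Sigma$ for every $g \in G$, which holds automatically by $G$-invariance of $\Sigma$. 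Hence $\Sigma = \Sigma(V^G; DE, P)$ with $DE \subset G$ finite and $P \subset V^{DE}$ an algebraic subgroup, exhibiting $\Sigma$ as an algebraic group subshift of finite type of $V^G$. Invoking Proposition~\ref{c:markov-alg} in the reverse direction for the group $G$ then completes the proof that $G$ is of algebraic Markov type.

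The argument is essentially bookkeeping once the identification $V^G \cong B^H$ is set up, and I do not expect a genuine obstacle; the only point requiring care is to check that this identification is compatible simultaneously with the prodiscrete topology, the coordinatewise algebraic group structure, and the shift action, and that the restricted bijection $B^D = V^{DE}$ is an isomorphism of $K$-algebraic groups so that algebraic subgroups on the two sides correspond.
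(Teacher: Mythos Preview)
Your proof is correct and follows essentially the same approach as the paper's: pick a finite transversal $E$ for $H$ in $G$, identify $V^G$ with $(V^E)^H$, verify that $\Sigma$ becomes an algebraic group subshift over $H$, apply the algebraic Markov type of $H$ to get a finite-type presentation, and then use Lemma~\ref{l:restriction-sft} to pull this back to an algebraic group subshift of finite type of $V^G$. Your writeup is in fact slightly more careful than the paper's in justifying why the identification $B^H \cong V^G$ respects the prodiscrete topology, the algebraic group structure on restrictions, and the $H$-shift, and why the left-hand side of the Lemma~\ref{l:restriction-sft} identity recovers $\Sigma$.
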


\begin{proof}
Let $V$ be an algebraic group over an algebraically closed field 
and let $\Sigma$ be an algebraic group subshift  of $V^G$. We must show that 
$\Sigma$ is an algebraic group subshift of finite type of $V^G$. 
\par 
Since $\varphi$ is surjective, we can choose a finite subset $E \subset G$ 
such that $|E| = |F|$ and $\varphi(E)=F$. Equivalently, $E$ is a complete set of 
representatives of cosets of $H$ in $G$. 
\par 
Let $U= V^E$ then $U$ is an algebraic group. Then by the discussion before Lemma~\ref{l:restriction-sft}, 
we have a canonical bijection $U^H = V^G$: to each $x \in U^H$, we associate an element $y \in V^{HE}$ given  
by $y(h k) \coloneqq (x(h))(k)$ for every $h \in H$ and $k \in E$. 
It is clear that the above bijection commutes with 
the shift actions of the group $H$.  
\par 
Therefore, $\Sigma$ can be regarded as an algebraic group subshift of $U^H$ with the shift action of the group $H$. 
Since $H$ is of Markov type, we deduce that $\Sigma$ is an algebraic group subshift of finite type of $U^H$. 
Thus, there exists a finite subset $D \in H$ and an algebraic subgroup $P \subset U^D$ such that 
$\Sigma = \Sigma(U^H; D, P)$. 
By Lemma~\ref{l:restriction-sft}, we find that 
\begin{equation*}
\Sigma = \Sigma( V^G; G,  \Sigma) = \Sigma( V^G; HE, \Sigma(U^H; D, P) ) =\Sigma(V^G; DE, P).   
\end{equation*}
\par 
Since $DE$ is finite and $P$ is an algebraic subgroup of $U^D= V^{DE}$, 
we conclude that $\Sigma$ is an algebraic group subshift of finite type of $V^G$. 
\end{proof}

\subsection{Application on groups of algebraic Markov type}  

As an immediate application of Theorem~\ref{t:algebraic-group-Z-d} and Proposition~\ref{p:main-by-finite}, we obtain the following property on the class of groups of algebraic Markov type. 
\begin{corollary} 
\label{c:markov-group-extension}
 Let $0 \to H \to G  \to C \to 0$ be an extension of countable groups. 
 Suppose that $C$ is cyclic and $H$ is a group of algebraic Markov type.
 Then $G$ is also a group of algebraic Markov type. 
 \qed  
\end{corollary}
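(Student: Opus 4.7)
The plan is to dispatch the corollary by a dichotomy on the cyclic quotient $C$. Every cyclic group is either finite or isomorphic to $\Z$, so I would split into exactly these two cases and invoke one of the two previously established results in each. There is nothing to prove beyond this case split, because the two main theorems of the section have already been designed to cover precisely these two building blocks of cyclic extensions.

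First, if $C$ is finite, then the short exact sequence $0 \to H \to G \to C \to 0$ presents $G$ as a finite extension of a countable group of algebraic Markov type, and Proposition~\ref{p:main-by-finite} applies verbatim to give the conclusion that $G$ is of algebraic Markov type. Second, if $C$ is infinite cyclic, fix an isomorphism $C \cong \Z$; the extension then takes the form $0 \to H \to G \xrightarrow{\varphi} \Z \to 0$ with $H$ of algebraic Markov type, and Theorem~\ref{t:algebraic-group-Z-d} applies directly.

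In both cases one should check the countability hypothesis, but this is automatic: $H$ is countable by assumption (built into Definition~\ref{d:markov-type-alg}), $C$ is countable since it is cyclic, and hence $G$, being an extension of one countable group by another, is itself countable. I do not expect any obstacle here, since the hard technical content, namely the inverse-limit construction carried out through Lemma~\ref{claim:1} and Lemma~\ref{claim:2} for the $\Z$-extension case, has already been absorbed into Theorem~\ref{t:algebraic-group-Z-d}. The corollary is, in effect, just a convenient packaging of Theorem~\ref{t:algebraic-group-Z-d} and Proposition~\ref{p:main-by-finite} under the common umbrella of cyclic extensions.
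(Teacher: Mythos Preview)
Your proof is correct and matches the paper's approach exactly: the corollary is stated with a \qed and is presented as an immediate consequence of Theorem~\ref{t:algebraic-group-Z-d} and Proposition~\ref{p:main-by-finite}, which is precisely your dichotomy on whether $C$ is finite or infinite cyclic. Your remark on countability is a reasonable sanity check, though the paper leaves it implicit.
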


\begin{proof}[Proof of Theorem~\ref{t:main-polycyclic-finite}]
The argument below is standard and similar to the proof of \cite[Theorem~4.2]{schmidt-book}. 
Suppose that $G$ is a countable, polycyclic-by-finite group. 
Then $G$ admits a subnormal series $G=G_n \supset G_{n-1} \supset \cdots \supset G_0=\{1_{G_0}\}$ 
whose factors are cyclic groups. 
Since a trivial group is of algebraic Markov type (see the remark following Definition~\ref{d:markov-type-alg}), 
an immediate induction 
using Theorem~\ref{t:algebraic-group-Z-d} and Proposition~\ref{p:main-by-finite} 
shows that $G_0,   \cdots, G_n=G$ are all of algebraic Markov type. 
The proof of Theorem~\ref{t:main-polycyclic-finite} is completed. 
\end{proof}

\section{Inverse images of homomorphisms of algebraic group subshifts}
\label{s:applications}

For the proof of Theorem~\ref{c:intro-algebraic-group-abelian}, we remark that  
by Theorem~\ref{t:intro-sft-property-alg-grp}, $\Sigma_1$ and $\Sigma_2$  
are algebraic group subshifts of finite type of $U^G$ and   $V^G$ respectively.  
By Theorem~\ref{t:sofic-alg-grp}, 
the algebraic group sofic subshift $\tau(\Sigma_1)$ is in fact an algebraic group subshift of $V^G$. 
Hence it is also  an algebraic group subshift of finite type of $V^G$ by Theorem~\ref{t:intro-sft-property-alg-grp}. 
\par  
The rest of the proof of Theorem~\ref{c:intro-algebraic-group-abelian} is a direct 
consequence of Theorem~\ref{t:intro-sft-property-alg-grp} and the following general result. 

\begin{proposition}
\label{p:ker} 
Let $G$ be a countable group and let $U, V$ 
be algebraic groups over an algebraically closed field. 
Let $\Sigma$ be an algebraic group subshift of $V^G$ and let 
$\tau \in \Hom_{U, V G\text{-algr}}(U^G, V^G)$. 
Then $\tau^{-1}(\Sigma)$ is an algebraic group subshift of $U^G$.  
\end{proposition}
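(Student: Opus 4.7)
The plan is to verify the three defining properties of an algebraic group subshift for $\tau^{-1}(\Sigma)$. Two of these are routine: the $G$-invariance of $\tau^{-1}(\Sigma)$ follows from the $G$-equivariance of the cellular automaton $\tau$, and since $\tau \colon U^G \to V^G$ is uniformly continuous for the prodiscrete topologies while $\Sigma$ is closed, the preimage $\tau^{-1}(\Sigma)$ is closed in $U^G$. All of the work therefore goes into showing that for every finite $E \subset G$, the restriction $(\tau^{-1}(\Sigma))_E$ is an algebraic subgroup of $U^E$.

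Fix such $E$. Let $M \subset G$ be a memory set of $\tau$ and let $\mu \colon U^M \to V$ be the associated homomorphism of algebraic groups. Since $G$ is countable, I choose an increasing exhaustion $G = \bigcup_{n \geq 0} F_n$ by finite subsets with $E \subset F_0$. The local form of $\tau$ yields for each $n$ a homomorphism of algebraic groups $\tau_{F_n} \colon U^{F_n M} \to V^{F_n}$, well-defined because $gM \subset F_n M$ for $g \in F_n$. Setting
\[
S_n \coloneqq \tau_{F_n}^{-1}(\Sigma_{F_n}) \subset U^{F_n M},
\]
this is an algebraic subgroup since $\Sigma_{F_n} \subset V^{F_n}$ is an algebraic subgroup; the canonical projection then sends $S_n$ to an algebraic subgroup $(S_n)_E$ of $U^E$. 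Moreover, $((S_n)_E)_{n \geq 0}$ is descending because any $y \in S_{n+1}$ restricts to an element of $S_n$, and so by Noetherianity of the Zariski topology on $U^E$ the chain stabilizes to an algebraic subgroup.

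The key claim is that $(\tau^{-1}(\Sigma))_E = \bigcap_{n \geq 0} (S_n)_E$, which together with the previous paragraph gives the result. The inclusion $\subset$ is immediate: for $x \in \tau^{-1}(\Sigma)$ we have $x|_{F_n M} \in S_n$, so $x|_E \in (S_n)_E$ for every $n$. For the reverse inclusion, given $z \in \bigcap_n (S_n)_E$, I form the inverse system
\[
T_n \coloneqq \{ y \in S_n \colon y|_E = z \},
\]
whose elements are translates of algebraic subgroups of $S_n$ and whose transition maps $T_{n+1} \to T_n$ are restrictions of the canonical projections $U^{F_{n+1} M} \to U^{F_n M}$, hence morphisms of algebraic varieties with Zariski closed images. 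Each $T_n$ is nonempty by the hypothesis $z \in (S_n)_E$, so Lemma~\ref{l:inverse-limit-alg-grp} produces a compatible element, which by Lemma~\ref{l:closed-lim} corresponds to a configuration $x \in U^G$ with $x|_{F_n M} \in S_n$ for all $n$ (equivalently, $\tau(x) \in \Sigma$) and $x|_E = z$.

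The main obstacle is exactly this lifting step in the reverse inclusion: a consistent family of finite-dimensional algebraic data must be patched into a single global configuration lying in $\tau^{-1}(\Sigma)$. This is precisely the place where the inverse-limit lemma for algebraic varieties is indispensable, and the strategy mirrors the descending chain arguments employed throughout Section~\ref{s:intersection}.
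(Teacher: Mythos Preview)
Your proof is correct and follows essentially the same approach as the paper's own argument: both establish the key identity $(\tau^{-1}(\Sigma))_E = \bigcap_{n \geq 0} (S_n)_E$ by using the inverse-limit Lemma~\ref{l:inverse-limit-alg-grp} on the nonempty fibers $T_n$ (translates of algebraic subgroups) to lift a point $z$ in the intersection to a global configuration in $\tau^{-1}(\Sigma)$, and then conclude by Noetherianity. The only cosmetic difference is notation (the paper writes $W_n$, $Z_n$, $S_n$ for your $S_n$, $(S_n)_E$, $T_n$) and that the paper explicitly assumes $1_G \in M$, which you implicitly need so that $E \subset F_n M$ and the projections to $U^E$ make sense.
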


\begin{proof} 
Since $\tau$ is $G$-equivariant and is continuous with respect to the prodiscrete topology, 
$ \tau^{-1}(\Sigma)$ is clearly $G$-invariant and is closed in $U^G$ with respect to the prodiscrete topology. 
\par 
Since $\tau \in \Hom_{U, V G\text{-algr}}(U^G, V^G)$, there exists a finite subset $M \subset G$ containing $1_G$ 
and a homomorphism of algebraic groups $\mu \colon U^M \to V$ such that 
\begin{equation} 
\label{e;M-ker} 
\tau(x)(g) = \mu( (g^{-1}x)\vert_M)  \quad  \text{for all } x \in B^G \text{ and } g \in G. 
\end{equation}
Let $F \subset G$ be finite subset. Then $\Sigma_F$ is an algebraic subgroup of $U^{FM}$ 
and we have a map (cf.~\cite[Section~3]{ccp-goe-2020}): 
\[
\tau_{F}^{+} \colon U^{FM} \to V^F 
 \]
defined by  $\tau_F^{+}(c) = \tau(u)\vert_F$ for every $c \in U^{FM}$ and every $u \in U^G$ such that $u \vert_{FM}=c$.  
Then $\tau_F^{+}$ is a certain fibered product over $g \in F$ of the 
homomorphisms of algebraic groups $U^{gM} \to V^{\{g\}}$ naturally induced by $\mu$. 
Hence, $\tau_{F}^{+}$ is a homomorphism of algebraic groups for every finite subset $F \subset G$. 
\par 
Let $E \subset G$ be a fixed finite subset. 
Since $G$ is countable, there exists an increasing sequence $(E_n)_{n \geq 0}$ 
of finite subsets of $G$ containing $E$ and such that $G= \bigcup_{n \geq 0} E_n$. 
\par 
For every $n \geq 0$, let  $\psi_n \colon U^{E_nM} \to U^E$ be the canonical homomorphism 
of algebraic groups induced by the inclusion $E \subset E_nM$ (cf.~Lemma~\ref{l:restriction-map-also-alg}). 
For every $n \geq 0$, we define respectively algebraic subgroups of $U^{E_nM}$ and $U^E$ by 
\begin{align}
\label{e:w-z-ker}
W_n \coloneqq \left(\tau_{E_n}^{+}\right)^{-1} (\Sigma_{E_n}), 
\quad \quad   
Z_n   \coloneqq \psi_n(W_n).
\end{align} 
\par 
 We claim that $(\tau^{-1}(\Sigma))_E = \bigcap_{n \geq 0} Z_n$. 
Indeed, 
if $z \in (\tau^{-1}(\Sigma))_E$ then there exists $x \in U^G$ such that $z= x \vert_E$ 
and $\tau(x) \in \Sigma$  thus $x\vert_{E_nM} \in W_n$ and $z=\psi_n(x\vert_{E_nM}) \in Z_n$ 
for all $n \geq 0$. Hence, $(\tau^{-1}(\Sigma))_E \subset \bigcap_{n \geq 0} Z_n$. 
\par   
For the converse implication, let $z \in \bigcap_{n \geq 0} Z_n$. 
For every $n \geq 0$, we define $S_n \coloneqq \psi_n^{-1}(z) \cap W_n$. 
Then $S_n$ is a translate of the algebraic subgroup $\Ker(\psi_n) \cap W_n$ of $W_n$. 
Since $z \in Z_n$, the set $S_n$ is nonempty for every $n \geq 0$. 
\par 
We thus obtain an inverse system $(S_n)_{n \geq 0}$ whose transition maps are the restrictions to $S_m$ 
of the canonical homomorphism of algebraic groups $U^{E_mM} \to U^{E_n M}$ 
(cf.~Lemma~\ref{l:restriction-map-also-alg}) for all $m \geq n \geq 0$. 
\par 
By Lemma~\ref{l:inverse-limit-alg-grp}, there exists $y \in \varprojlim_{n \geq 0} S_n \subset U^G$. 
Since $y\vert_{E_nM} \in \psi_n^{-1}(z)$ for all $n \geq 0$, we find that $y\vert_E = z$. 
As $y\vert_{E_nM} \in W_n$ for every $n \geq 0$, we infer from \eqref{e:w-z-ker} 
that $\tau(y)\vert_{E_n} = \tau_{E_nM}^{+}(y\vert_{E_nM}) \in \Sigma_{E_n}$. 
Hence, it follows Lemma~\ref{l:closed-lim} that $\tau(y) \in \varprojlim_{n \geq 0} \Sigma_{E_n} = \Sigma$. 
We deduce that $(\tau^{-1}(\Sigma))_E \supset \bigcap_{n \geq 0} Z_n$. 
This proves the claim that $(\tau^{-1}(\Sigma))_E = \bigcap_{n \geq 0} Z_n$.  
\par 
By the Noetherinaity of the Zariski topology, $\bigcap_{n \geq 0} Z_n$ is an algebraic subgroup 
 of $U^E$ and thus so is $(\tau^{-1}(\Sigma))_E$. 
We can thus conclude that $\tau^{-1}(\Sigma)$ is an algebraic group subshift of $U^G$. 
\end{proof}

\section{Generalizations} 
\label{s:generalization} 
In this section, we obtain axiomatic  generalizations of our main results where the alphabets can now be 
taken for example as Artinian groups or Artinian modules over a ring. 

\subsection{Admissible Artinian group structures}  
\begin{definition} 
\label{d:general-artinian-structure}
Let $\Gamma$ be a group. 
An \emph{admissible Artinian structure} on $\Gamma$ is a sequence $\HH = (\HH_n)_{n \geq 1}$ where every 
$\HH_n$ is a collection of subgroups of $\Gamma^n$ with the following stability properties: 
\begin{enumerate} 
\item 
$ \{1_\Gamma\}, \Gamma  \in \HH_1$; 
\item 
for every $m \geq n \geq 1$ and for every projection $\pi \colon \Gamma^m \to \Gamma^n$ induced by an injection 
$\{1, \cdots, n\} \to \{ 1, \cdots, m\}$, we have $\pi(H_m) \in \HH_n$ and $\pi^{-1}(H_n) \in \HH_m$ 
for every $H_m \in \HH_m$ and $H_n \in \HH_n$;   
\item 
for every $n \geq 1$ and $H, K\in \HH_n$, we have $H \cap K \in \HH_n$; 
\item 
for every $n \geq 1$, every descending sequence $(H_k)_{k \geq 0}$ of subgroups of $\Gamma^n$, where  
 $H_k \in \HH_n$ for every $k \geq 0$, eventually stabilizes. 
\end{enumerate} 
\par 
In this case, we say that  $(\Gamma, \HH)$, or simply $\Gamma$ when there is no possible confusion, 
is an \emph{admissible Artinian group structure}. 
For every $n \geq 1$, elements of $\HH_n$ are called \emph{admissible subgroups} of $\Gamma^n$. 
\end{definition}

We have the following simple observation:  

\begin{lemma}
\label{l:induced-admissible}
Let $\Gamma$ be a group with an admissible Artinian structure $ (\HH_n)_{n \geq 1}$ and 
let $H \subset \Gamma$  be an admissible subgroup. The following holds: 
\begin{enumerate} [\rm (i)]
\item   
for every $m \geq 1$, the group $\Gamma^m$ admits 
an admissible Artinian group structure given by $(\HH_{mn})_{n \geq 1}$; 
\item 
$H$ admits an induced admissible Artinian structure $(\HH'_n)_{n \geq 1}$ given by 
$\HH'_n \coloneqq \{ H^n \cap H_n \colon H_n \in \HH_n \}$.  
\end{enumerate}
 \end{lemma}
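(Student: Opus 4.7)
The plan is to verify the four defining conditions of Definition~\ref{d:general-artinian-structure} in each of (i) and (ii). Part (i) reduces to bookkeeping, while part (ii) requires one small conceptual observation about when a subgroup of $\Gamma^n$ lies in $\HH'_n$.

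For part (i), I would first note the tautology that a canonical projection $(\Gamma^m)^N \to (\Gamma^m)^n$ induced by an injection $\{1,\dots,n\}\hookrightarrow\{1,\dots,N\}$ coincides, after the identification $(\Gamma^m)^k = \Gamma^{mk}$, with a canonical projection $\Gamma^{mN}\to \Gamma^{mn}$ induced by the corresponding ``block'' injection $\{1,\dots,mn\}\hookrightarrow\{1,\dots,mN\}$. With this identification, conditions (2), (3), (4) of Definition~\ref{d:general-artinian-structure} for $(\HH_{mn})_{n\geq 1}$ on $\Gamma^m$ follow immediately from the same conditions for $(\HH_n)_{n\geq 1}$ on $\Gamma$. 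For condition (1), I would use $\{1_{\Gamma^m}\}=\bigcap_{i=1}^{m}\pi_i^{-1}(\{1_\Gamma\})$ where $\pi_i\colon \Gamma^m\to\Gamma$ is the $i$-th coordinate projection, together with conditions (2) and (3) on the original structure.

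For part (ii), my preliminary step is to observe that $H^m\in\HH_m$ for every $m\geq 1$: this follows because $H\in\HH_1$ (it is admissible by hypothesis), so $H^m=\bigcap_{i=1}^{m}\pi_i^{-1}(H)$ lies in $\HH_m$ by conditions (2) and (3) of $\HH$. Condition (1) then reads $\{1_H\}=H\cap\{1_\Gamma\}\in\HH'_1$. For the inverse-image half of (2), given $L_n=H^n\cap H_n\in\HH'_n$ with $H_n\in\HH_n$, and $\pi\colon H^m\to H^n$ induced by an injection, I would let $\tilde\pi\colon\Gamma^m\to\Gamma^n$ be the extension and write $\pi^{-1}(L_n)=H^m\cap\tilde\pi^{-1}(H_n)$; since $\tilde\pi^{-1}(H_n)\in\HH_m$ by condition (2) of $\HH$, this is in $\HH'_m$. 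Condition (3) is immediate: $(H^n\cap H_n)\cap(H^n\cap H_n')=H^n\cap(H_n\cap H_n')\in\HH'_n$. For condition (4), given a descending sequence $(L_k)_{k\geq 0}$ in $\HH'_n$ with $L_k=H^n\cap H_{n,k}$, I would replace $H_{n,k}$ by $H_{n,0}\cap\cdots\cap H_{n,k}\in\HH_n$ to obtain a genuinely descending sequence in $\HH_n$ whose intersections with $H^n$ still reproduce $L_k$; then the DCC on $\HH_n$ forces stabilization.

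The only subtle point, and so the main place to be careful, is the forward-image half of condition (2) in part (ii): given $L_m=H^m\cap H_m\in\HH'_m$, I need $\pi(L_m)\in\HH'_n$, which means expressing it in the form $H^n\cap K_n$ with $K_n\in\HH_n$. The key trick is to intersect with $H^m$ inside $\Gamma^m$ \emph{before} projecting. Concretely, $\pi(L_m)=\tilde\pi(H^m\cap H_m)$, and since $H^m\cap H_m\in\HH_m$ by the preliminary observation together with condition (3) of $\HH$, condition (2) of $\HH$ gives $\tilde\pi(H^m\cap H_m)\in\HH_n$. Moreover, this image is already contained in $H^n$, so it trivially equals $H^n\cap\tilde\pi(H^m\cap H_m)$, placing it in $\HH'_n$. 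This completes the verification, and (i) and (ii) both follow.
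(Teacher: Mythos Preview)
Your proof is correct and follows essentially the same route as the paper's: both verify conditions (1)--(4) directly, first noting that $H^m\in\HH_m$ (via preimages of $H$ under coordinate projections and condition~(3)), then handling forward and inverse images in (2) by passing to the ambient projection $\tilde\pi\colon\Gamma^m\to\Gamma^n$, and treating (4) by replacing the representing subgroups $H_{n,k}$ with their running intersections to force a genuine descending chain in $\HH_n$. You supply more detail than the paper does---in particular for part~(i) and for why $\tilde\pi(H^m\cap H_m)$ actually lands in $\HH'_n$ rather than just $\HH_n$---but there is no substantive difference in method.
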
 

\begin{proof}
The point (i) is straightforward. 
For (ii), let us check the conditions in Definition~\ref{d:general-artinian-structure}. 
The conditions (1) and (3)  are trivial. 
For (2), let $m \geq n \geq 1$ and let $\{1, \cdots, n\} \to \{ 1, \cdots, m\}$ 
be an injection.  
 Let $\pi \colon \Gamma^m \to \Gamma^n$ and let 
 $\pi_H  \colon H^m \to H^n$ be  the induced  projections. 
 Let $n \in \HH_n$ and $H_m \in \HH_m$.  
 Since $H$ is an admissible subgroup of $\Gamma$,  
 so are $H^m \subset \Gamma^m$, $H^n \subset \Gamma^n$ by the condition (2) for $\Gamma$.  
 Hence, $H^m \cap H_m \in \HH_m$ and $H^n \cap H_n \in \HH_n$. 
 By the condition (2) for $\Gamma$, it follows that  $\pi_H (H^m \cap H_m) 
 = \pi(H^m \cap H_m) \in \HH_n$ and similarly 
$\pi_H^{-1}(H^n \cap H_n) = H^m \cap \pi^{-1}( H_n)  \in \HH_m$. 
Thus (2) is verified. 
 \par 
 For (4), let $k \geq 1$ and let $(H^k \cap H_n)_{n \geq 1}$ be a descending sequence 
 where $H_n \in \HH_k$ for every $n \geq 1$. By replacing $H_n$ by $\cap_{1 \leq m \leq n} H_m \in \HH_k$ 
 for every $n \geq 1$, we find that $(H^k \cap H_n)_{n \geq 1}$ is a descending sequence of elements in $\HH_k$.  
  Thus, $(H^k \cap H_n)_{n \geq 1}$ eventually stabilizes by the condition (4) for  $\Gamma$. 
 \end{proof}

\begin{example}
Let $\Gamma$ be a group equipped with a collection $\AAA$ of its subgroups which is stable by taking intersection  
and such that $\{1_\Gamma\}, \Gamma \in \AAA$ and every descending sequence of subgroups taken in $\AAA$ eventually stabilizes. 
Then $\AAA$ induces an admissible Artinian structure  $\HH= (\HH_n)_{n \geq 1}$ on $\Gamma$  
defined by  
$\HH_n   \coloneqq \AAA \times \cdots \times \AAA$ ($n$-times) 
for every $n \geq 1$.  
\par 
Remark that $\HH$ is the smallest admissible Artinian structure of $\Gamma$ such that 
every elements of $\AAA$ is an admissible subgroup of $\Gamma$. 
\end{example}

\begin{definition}
Let the notations be as above. We say that $\HH=  (\HH_n)_{n \geq 1}$ is the \emph{induced product admissible Artinian structure} on $\Gamma$ by $\mathcal{A}$. 
\end{definition}

\begin{example} 
Every algebraic group $V$ over an algebraically closed field, 
resp. every compact Lie group $W$, admits a canonical admissible Artinian structure given by all algebraic subgroups of $V^n$, 
resp. by all closed subgroups of $W^n$,  
for $n \geq 1$.  
\end{example}

\begin{example}
\label{example:canonical-group-admissible} 
Recall that a group $\Gamma$ is \emph{Artinian} if every descending sequence of subgroups of $\Gamma$ eventually stabilizes. 
 \par 
For example,   finite groups are Artinian and for every prime number $p$, 
the  subgroup $\mu_{p^\infty} \coloneqq \{ z \in \C^* \colon \exists\, n \geq 0, \, z^{p^n} = 1  \}$ 
of the multiplicative group $(\C^*, \times)$ is Artinian.   
\par 
 It is well-known that finite direct products of Artinian groups are Artinian (see, e.g., the proof of \cite[Lemma~4.3]{kitchens-schmidt}). 
\par 
Then, it is clear that every Artinian group $\Gamma$ admits a canonical admissible Artinian structure given by all subgroups of $\Gamma^n$ for every $n\geq 1$.  

\end{example}

\begin{example} 
\label{example:canonical-module-admissible} 
Similarly, every Artinian (left or right) 
module $M$ over a ring $R$ is equipped with a canonical admissible Artinian structure given by all $R$-submodules of $M^n$ for all $n \geq 1$. 
\end{example}

\begin{definition} 
\label{d:admissible-homomorphism}
Let $\Gamma$ and $\Gamma'$ be admissible Artinian group structures. 
An admissible Artinian group structure $\mathcal{H}$ 
on $\Gamma \times \Gamma'$ is said to be \emph{compatible} with $\Gamma$ and $\Gamma'$ if 
the following holds: 
\begin{enumerate} [\rm (*)] 
\item 
for every integer $m \geq 1$ and for the canonical projections $p \colon (\Gamma \times \Gamma')^m \to \Gamma^m$ and
$p' \colon (\Gamma \times \Gamma')^m \to (\Gamma')^m$, and 
for all admissible subgroups $P \subset (\Gamma \times \Gamma')^m$ and $Q \subset \Gamma^m$ and $Q'\subset (\Gamma')^m$, 
the groups $p(P)$, $p'(P)$ are respectively  admissible subgroups of $\Gamma^m$ and $(\Gamma')^m$
  and the groups 
 $p^{-1}(Q)$, $(p')^{-1}(Q')$ are admissible subgroups of $(\Gamma \times \Gamma')^m$. 
\end{enumerate} 
\par 
A homomorphism of abstract groups $\varphi \colon \Gamma \to \Gamma'$ 
is said to be \emph{admissible} 
 if there exists a compatible Artinian group structure on $\Gamma \times \Gamma'$
such that 
the graph $\{ (x, \varphi(x)) \colon x \in \Gamma \}$ 
is an admissible subgroup of $\Gamma \times \Gamma'$; 
\end{definition}

Several useful remarks are  in order. 

\begin{remark} 
Homomorphisms of algebraic groups, homomorphisms of compact Lie groups, 
homomorphisms of Artinian groups, and 
morphisms of $R$-modules are all admissible with the canonical admissible Artinian 
structures of algebraic groups, compact Lie groups, Artinian groups, and $R$-modules respectively that are described in the  above examples.  
\end{remark}

\begin{remark}
Given an admissible Artinian group structure $(\Gamma, \HH)$ as in Definition~\ref{d:general-artinian-structure}, 
it is clear that the   projections $\pi \colon \Gamma^m \to \Gamma^n$ induced by injections 
$\{1, \cdots, n\} \to \{ 1, \cdots, m\}$ are admissible homomorphisms of admissible Artinian group structures on $\Gamma^m$ 
and $\Gamma^n$ for every $m \geq n \geq 1$. Moreover, 
by Lemma~\ref{l:induced-admissible}, the restrictions 
 of the canonical projections $\pi \colon \Gamma^m \to \Gamma^n$ to admissible subgroups  
 of $\Gamma^m$ are also admissible homomorphisms. 
\end{remark}

 \subsection{Finiteness properties of admissible group shifts}

 We first extend both the definition of \emph{group shifts} with finite group alphabets in the literature (cf.~\cite{fiorenzi-periodic}) and 
 Definition~\ref{d:alg-subgroup-shift} of algebraic group subshifts   
to the case where the alphabets are admissible Artinian group structures as follows. 

\begin{definition}
\label{d:admissible-subgroup-shift}
Let $G$ be a group. Let $A$ be an admissible Artinian group structure. 
A subshift $\Sigma \subset A^G$ is called an \emph{admissible group subshift}  
 if it is closed in $A^G$ with respect to the prodiscrete topology and  
if the restriction $\Sigma_E \subset A^E$ is an admissible subgroup 
for any finite subset $E \subset G$.        
\end{definition}
\par 
In the above definition and in what follows, the admissible Artinian structure of $A^E$ is induced by that of $A^{\{1, \cdots, |E|\}}$ via an arbitrary bijection 
$\{1, \cdots, |E|\} \to E$.  
\par 
\begin{example} 
\label{example:canonical-admissible-for-intro} 
Let $G$ be a group and let $A$ be an Artinian group (resp. an Artinian module over a ring  $R$). 
Then $A^G$ is naturally an abstract group with pointwise group operations (resp. an $R$-module with pointwise group operations and with pointwise $R$-action).  
Let $\Sigma$ be an abstract subgroup (resp. an $R$-submodule) of $A^G$ which is also a closed subshift. 
Then  
$\Sigma$ is   an admissible subshift of $A^G$ with respect to 
 the canonical admissible Artinian structure on $A$ (cf.~Example~\ref{example:canonical-group-admissible}, Example~\ref{example:canonical-module-admissible}). 
 \par
 Likewise, algebraic group subshifts are also admissible subshifts 
 with respect to the canonical admissible Artinian structure on algebraic groups. 
\end{example}

 As an application of the proofs of our main theorems, we obtain the following general result: 

\begin{theorem} 
\label{t:artinian-general} 
Let $G$ be a polycyclic-by-finite group 
and let $A$ be an admissible Artinian group structure. 
Then every admissible group subshift of $A^G$ 
is a subshift of finite type. Moreover, there exists a finite subset $D \subset G$ and 
an admissible subgroup $W$ of $A^D$ such that $\Sigma = \Sigma(A^G ; D, W)$.  
\end{theorem}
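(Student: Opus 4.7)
The plan is to replicate, \emph{mutatis mutandis}, the entire chain of arguments leading to Theorem~\ref{t:main-polycyclic-finite}, with the dictionary ``algebraic subgroup of a finite power of $V$'' $\leftrightarrow$ ``admissible subgroup of a finite power of $A$'' and the dictionary ``morphism/homomorphism of $K$-algebraic groups'' $\leftrightarrow$ ``admissible homomorphism.'' First I would introduce the notion of a group $G$ being of \emph{admissible Markov type} with respect to $A$: every admissible group subshift of $A^G$ is a subshift of finite type, equivalently (by an exact transcription of Proposition~\ref{c:markov-alg}) every descending sequence of admissible group subshifts of $A^G$ eventually stabilizes. The theorem then amounts to proving that every polycyclic-by-finite group is of admissible Markov type for every admissible Artinian group structure $A$.

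The first technical step is to produce the admissible analogue of Lemma~\ref{l:inverse-limit-alg-grp}, which is the only place where algebraicity is essentially used in Section~\ref{s:intersection} and Section~\ref{s:main}. Given a countable directed inverse system $(X_i, \varphi_{ij})$ in which each $X_i$ is a nonempty coset of an admissible subgroup of some $A^{n_i}$ and each transition map $\varphi_{ij}$ is the restriction of an admissible homomorphism $A^{n_j} \to A^{n_i}$, I would run the usual Mittag--Leffler argument: at each fixed level $i$, the descending sequence of images $(\varphi_{ij}(X_j))_{j \succ i}$ lies inside $\Gamma^{n_i}$ and, after passing to translates, is a descending sequence of admissible subgroups that eventually stabilizes by axiom (4) of Definition~\ref{d:general-artinian-structure}. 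Choosing compatible preimages in the stabilized tails yields an element of $\varprojlim X_i$. Axioms (1)--(3) are precisely what is needed to ensure that kernels, preimages of points, intersections, and translates of admissible subgroups remain cosets of admissible subgroups, so that the inverse system stays within the admissible setting.

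With this replacement in hand, the proof of Theorem~\ref{c:descending-property} goes through verbatim for admissible group subshifts, using axiom (4) in place of Noetherianity of the Zariski topology to conclude that the restrictions $(\bigcap_n \Sigma_n)_E$ are admissible subgroups. The proof of Proposition~\ref{c:markov-alg} is then formal and gives the equivalence between the descending chain condition and the finite-type property in the admissible setting. Lemma~\ref{l:restriction-closed} and Lemma~\ref{l:restrict-closed-subshift} transcribe directly; note that axiom (2) of Definition~\ref{d:general-artinian-structure}, together with Lemma~\ref{l:induced-admissible}, is exactly the replacement for Lemma~\ref{l:restriction-map-also-alg}, guaranteeing that the projections $\Sigma_E \to \Sigma_F$ are admissible homomorphisms between admissible subgroups for finite $F \subset E$. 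Lemma~\ref{l:alg-shift-induction} transcribes in the same way.

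Next I would transcribe the cyclic extension theorem (Theorem~\ref{t:algebraic-group-Z-d}) and the finite extension proposition (Proposition~\ref{p:main-by-finite}). The proof of Theorem~\ref{t:algebraic-group-Z-d} only uses: (a) restrictions of admissible group subshifts to finite subsets are admissible subgroups; (b) kernels, preimages, and translates in the admissible world exist; (c) the admissible inverse-limit lemma above; (d) DCC on admissible subgroups of $A^E$. All four are provided by Definition~\ref{d:general-artinian-structure} and Lemma~\ref{l:induced-admissible}. For the finite extension, Lemma~\ref{l:induced-admissible}(i) endows $A^E$ (for a transversal $E$ of $H$ in $G$) with an admissible Artinian structure making $A^E$ an admissible ``fat alphabet,'' and Lemma~\ref{l:restriction-sft} then performs the same window change as in Proposition~\ref{p:main-by-finite}. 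An induction on the subnormal series of cyclic factors of $G$, starting from the trivial group (which is of admissible Markov type by axiom (4)), proves that $G$ itself is of admissible Markov type; combined with the admissible analogue of Proposition~\ref{c:markov-alg} this gives Theorem~\ref{t:artinian-general} together with the explicit shape $\Sigma = \Sigma(A^G; D, W)$.

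The main obstacle will be the admissible Mittag--Leffler lemma replacing Lemma~\ref{l:inverse-limit-alg-grp}: one must carefully verify that translates of admissible subgroups behave well enough under admissible homomorphisms for the stabilization argument to produce compatible preimages without leaving the admissible category, and one must track the changing alphabet sizes $n_i$ through the inductive levels; everything else in the transcription is routine once the axioms of Definition~\ref{d:general-artinian-structure} and Lemma~\ref{l:induced-admissible} are installed as replacements for Noetherianity and the standard algebraic-geometric lemmas.
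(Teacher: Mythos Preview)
Your proposal is correct and matches the paper's own approach essentially line by line: the paper proves Theorem~\ref{t:artinian-general} by the same \emph{mutatis mutandis} transcription of the proof of Theorem~\ref{t:main-polycyclic-finite}, replacing Lemma~\ref{l:inverse-limit-alg-grp} by the admissible Mittag--Leffler lemma you describe (stated there as Lemma~\ref{l:inverse-limit-artinian-general}, together with the coset-stabilization Lemma~\ref{l:lemma-artinian-1}) and replacing Noetherianity of the Zariski topology by axiom~(4) of Definition~\ref{d:general-artinian-structure}. Your identification of the inverse-limit lemma as the one nontrivial step is exactly right.
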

 \par

Before giving the proof of Theorem~\ref{t:artinian-general}, we establish some 
key lemmata on inverse systems 
of admissible Artinian group structures.   

\begin{lemma}
\label{l:lemma-artinian-1} 
Let $\Gamma$ be an admissible Artinian group structure. 
Let $(X_n)_{n \geq 0}$ be a descending sequence of left translates of admissible subgroups of $\Gamma$. 
Then the sequence $(X_n)_{n \geq 0}$ eventually stabilizes. 
\end{lemma}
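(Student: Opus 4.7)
The plan is to reduce the stabilization of the sequence of cosets $(X_n)_{n \geq 0}$ to the descending chain condition on admissible subgroups of $\Gamma$ (i.e., axiom (4) of Definition~\ref{d:general-artinian-structure}).

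First, I would write each $X_n$ as a left coset $X_n = g_n H_n$ where $H_n$ is an admissible subgroup of $\Gamma$. Note that $H_n$ is uniquely determined by the coset $X_n$; indeed, for any $a \in X_n$, one has $H_n = a^{-1} X_n$. Using this, I would rewrite both $X_n$ and $X_{n+1}$ with the same base point: pick an arbitrary element $a \in X_{n+1}$ (which is nonempty, being a coset), and observe that $a \in X_n$ as well since $X_{n+1} \subset X_n$. Then $X_{n+1} = a H_{n+1}$ and $X_n = a H_n$, and the inclusion $a H_{n+1} \subset a H_n$ immediately yields $H_{n+1} \subset H_n$.

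The second step applies property (4) of the admissible Artinian structure on $\Gamma$ to the descending sequence $H_0 \supset H_1 \supset H_2 \supset \cdots$ of admissible subgroups, which gives an integer $N \geq 0$ such that $H_n = H_N$ for every $n \geq N$. Finally, for every $n \geq N$, both $X_n$ and $X_{n+1}$ are left cosets of the same subgroup $H_N$; since two left cosets of a subgroup are either equal or disjoint, the inclusion $X_{n+1} \subset X_n$ forces $X_{n+1} = X_n$, establishing stabilization.

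There is no serious technical obstacle here: the whole argument rests on the elementary observation that a left coset $gH$ determines the underlying subgroup $H$, together with the given descending chain condition on admissible subgroups. The one point to be careful about is the change-of-basepoint step, which requires choosing the representative inside $X_{n+1}$ rather than in $X_n \setminus X_{n+1}$, so that both cosets can be presented relative to the same element.
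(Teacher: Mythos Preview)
Your proof is correct and follows essentially the same approach as the paper: show that the underlying admissible subgroups $H_n$ form a descending chain, apply the descending chain condition (axiom~(4)), and then use that nested cosets of the same subgroup must coincide. Your change-of-basepoint trick is a slightly cleaner variant of the paper's computation with $g_n^{-1}g_{n+1}$, but the content is identical.
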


\begin{proof}
By hypothesis, we can find a sequence of elements $(g_n)_{n \geq 0}$ of $\Gamma$ 
and a sequence of admissible subgroups $(H_n)_{n \geq 0}$ of $\Gamma$ such that 
$X_n = g_n H_n$ for every $n \geq 0$. 
Fix an integer $n \geq 0$, 
it follows that $g_{n+1} H_{n+1} \subset g_n H_n$. 
Then $g_n^{-1} g_{n+1} H_{n+1} \subset H_n$ thus $g_n^{-1} g_{n+1}\in H_n$ and therefore $g_n^{-1} g_{n+1} = h_n \in H_n$ 
for some $h_n \in H_n$. Hence, $g_{n+1}^{-1} g_n =h_n^{-1}$ and we find 
\[
H_{n+1} = g_{n+1}^{-1}g_{n+1} H_{n+1} \subset   g_{n+1}^{-1}g_n H_n = h_n^{-1}  H_n = H_n. 
\]
\par 
Consequently, we obtain a descending sequence $(H_n)_{n \geq 0}$ of admissible subgroups of $\Gamma$. 
Since $\Gamma$ is an admissible Artinian group structure, 
there exists an integer $N \geq 0$ such that $H_n = H_N$ for al $n \geq N$. 
\par 
Finally, remark that if $g,h \in \Gamma$ and $H \subset \Gamma$ is a subgroup such that $g H \subset hH$ then 
necessary $gH= hH$. Applying this remark to the sequence $(g_nH_n)_{n \geq N}$, we can thus conclude that $X_n = X_N$ for all $n \geq N$ and the proof is completed. 
\end{proof}

\begin{lemma}
\label{l:inverse-limit-artinian-general} 
Let $(\Gamma_i, \varphi_{ij})_{i,j \in I}$ 
be an inverse system indexed by a countable directed set $I$, 
where every $\Gamma_i$ is an admissible Artinian group structure and the transition maps  
$\varphi_{i j} \colon \Gamma_j \to \Gamma_i$ are admissible homomorphisms for all $i \prec j$. 
Suppose that $X_i$, for every $i \in I$, is  
a left translate of an admissible subgroup of $\Gamma_i$ and that $\varphi_{ij}(X_j)\subset X_i$ 
for all  $i\prec j$ in $I$. 
Then the induced inverse subsystem $(X_i)_{i \in I}$ satisfies  
\begin{equation*}
\varprojlim_{i \in I} X_i \neq \varnothing. 
\end{equation*} 
\end{lemma}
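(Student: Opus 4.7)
The plan is to imitate the proof of Lemma~\ref{l:inverse-limit-alg-grp}, with the Noetherianity of the Zariski topology replaced by the descending chain condition provided by Lemma~\ref{l:lemma-artinian-1}, and the Mittag-Leffler argument on Zariski-closed images replaced by a direct construction by dependent choice along a cofinal chain. Since $I$ is countable and directed, I first extract a cofinal increasing sequence $i_0 \prec i_1 \prec i_2 \prec \cdots$ in $I$; it then suffices to show that the inverse subsystem $(X_{i_n})_{n \geq 0}$ has nonempty limit, because any thread over this cofinal chain extends uniquely to a thread over $I$.

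For each fixed $n \geq 0$, I analyse the descending family $\bigl(\varphi_{i_n, i_{n+k}}(X_{i_{n+k}})\bigr)_{k \geq 0}$ in $\Gamma_{i_n}$. Since each $X_{i_{n+k}}$ is a left translate of an admissible subgroup of $\Gamma_{i_{n+k}}$ and $\varphi_{i_n, i_{n+k}}$ is an admissible homomorphism, each term of the family is itself a left translate of an admissible subgroup of $\Gamma_{i_n}$. By Lemma~\ref{l:lemma-artinian-1}, this family stabilizes to a nonempty set $Z_n \subset X_{i_n}$ that is again such a translate.

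Next, I claim $\varphi_{i_n, i_{n+1}}(Z_{n+1}) = Z_n$ for all $n \geq 0$. Indeed, picking $k$ large enough that both $\varphi_{i_{n+1}, i_{n+1+k}}(X_{i_{n+1+k}}) = Z_{n+1}$ and $\varphi_{i_n, i_{n+1+k}}(X_{i_{n+1+k}}) = Z_n$, the compatibility of the inverse system yields
\[
\varphi_{i_n, i_{n+1}}(Z_{n+1}) \;=\; \varphi_{i_n, i_{n+1}} \circ \varphi_{i_{n+1}, i_{n+1+k}}(X_{i_{n+1+k}}) \;=\; \varphi_{i_n, i_{n+1+k}}(X_{i_{n+1+k}}) \;=\; Z_n.
\]
Hence $(Z_n)_{n \geq 0}$ is a countable inverse system of nonempty sets with surjective transition maps, and I construct $(x_n)_{n \geq 0} \in \varprojlim_{n \geq 0} Z_n$ by dependent choice: pick $x_0 \in Z_0$, then inductively choose $x_{n+1}$ in the nonempty fibre $\varphi_{i_n, i_{n+1}}^{-1}(x_n) \cap Z_{n+1}$.

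Finally, I extend this to a thread $y = (y_i)_{i \in I} \in \varprojlim_{i \in I} X_i$: for each $i \in I$, pick any $n$ with $i \prec i_n$ and set $y_i \coloneqq \varphi_{i, i_n}(x_n)$. Well-definedness and compatibility under the transition maps $\varphi_{ij}$ follow from the cofinality of $(i_n)$ and the compatibility of $(x_n)$, while $y_i \in \varphi_{i, i_n}(X_{i_n}) \subset X_i$ by hypothesis on the inverse subsystem. The only real subtlety is verifying that an admissible homomorphism carries left translates of admissible subgroups to left translates of admissible subgroups, which is immediate from the definition of admissibility, so that Lemma~\ref{l:lemma-artinian-1} is genuinely applicable to the descending family considered in the second paragraph.
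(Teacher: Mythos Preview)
Your proof is correct and follows essentially the same strategy as the paper's: reduce to a cofinal $\mathbb{N}$-indexed chain, use Lemma~\ref{l:lemma-artinian-1} to stabilize the descending family of images $\varphi_{i_n,i_{n+k}}(X_{i_{n+k}})$, observe that the resulting system $(Z_n)$ has surjective transition maps between nonempty sets, and conclude by dependent choice. The only cosmetic difference is that the paper assumes ``without loss of generality $I=\mathbb{N}$'' at the outset (deferring to \cite[Proposition~4.2]{phung-2018}), whereas you spell out the extraction of a cofinal sequence and the extension of the thread back to all of $I$.
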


\begin{proof}
 The  proof below is a straightforward modification of \cite[Proposition~4.2]{phung-2018} using the Mittag-Leffler condition argument. 
 We give the details here for sake of completeness. 
 \par   
First, since $I$ is countable, we can suppose without loss of generality that 
$(I , \prec) = \left( \{ n  \in \Z \colon n \geq 0\}, \leq \right)$  
as  directed sets (see~\cite[Proposition~4.2]{phung-2018}). 
For every integer $n \geq 0$, we define $Y_n \subset \Gamma_n$ by 
\[
Y_n \coloneqq \cap_{m \geq n }\, \varphi_{n m}(X_m).
\] 
\par 
By hypotheses, we deduce  that $(\varphi_{nm}(X_m))_{m \geq n}$ is a descending sequence of left translates of admissible subgroups of $\Gamma_n$. 
Since $\Gamma_n$ is an admissible Artinian group structure for every $n \geq 0$, 
it follows from Lemma~\ref{l:lemma-artinian-1} 
that we can define, by an immediate induction on $n$, an increasing   
sequence of integers $(k_n)_{n \geq 0}$ 
such that 
$k_n \geq n$ and for all $k \geq k_n$, we have 
$Y_n = \varphi_{n k}(X_k) \neq \varnothing$. 
\par 
For all integers $m \geq n \geq 0$,  consider the induced maps   
\[
f_{n m} \coloneqq \varphi_{n m} \vert_{Y_m}\colon Y_m \to Y_n 
\]
%The maps $f_{nm}$ are well-defined  by definition of $Y_n$ and $Y_m$ and by the hypothesis $\varphi_{kl}(X_l) \subset X_k$ for all integers $l \geq k \geq 0$. 
and remark form the choice above of the integers $k_m \geq k_n$  that  
\begin{align*}
f_{n m}(Y_m)
& = \varphi_{nm} \left(  \varphi_{mk_m}(X_{k_m}) \right)
 =  \varphi_{nk_m}(X_{k_m})
 =  Y_n. 
\end{align*}
\par 
We thus obtain the universal inverse system $(Y_n, f_{n m})$ associated with $(X_n, \varphi_{nm})$ in which   
the transition maps are surjective maps between nonempty spaces. 
Since the directed set is countable, 
it follows that $\varprojlim_{n \geq 0} Y_n$ is nonempty. Therefore, we conclude that 
$\varprojlim_{n \geq 0} X_n = \varprojlim_{n \geq 0} Y_n$ is nonempty. 
The proof is completed.   
\end{proof}

\begin{proof}[Proof of Theorem~\ref{t:artinian-general}] 
The proof is exactly the same, \emph{mutatis mutandis},  
as the proof of Theorem~\ref{t:main-polycyclic-finite}. 
It suffices to replace  {algebraic groups},  {algebraic subgroups}, and  {homomorphisms of algebraic groups} respectively by 
 {admissible Artinian group structures},  {admissible subgroups}, and  
 {admissible homomorphisms} of admissible Artinian group structures. 
Applications of Lemma~\ref{l:inverse-limit-alg-grp} in the proof should also be replaced everywhere 
by applying Lemma~\ref{l:inverse-limit-artinian-general} instead. 
Remark also that the Noetherianity of the Zariski topology on algebraic groups is now replaced 
by the descending chain condition in the definition of admissible Artinian group structures.  
\end{proof}

\subsection{Generalizations of other main results}

In fact, by repeating the exact same definitions and proofs in the present paper 
to admissible Artinian  group structures instead 
of algebraic groups, as we did for 
Definition \ref{d:admissible-subgroup-shift} and  Theorem~\ref{t:artinian-general}, 
all the results of the paper remain valid. 
We omit the details of similar straightforward verifications of the following principal results.

\begin{theorem} 
\label{t:artinian-general-sft} 
Let $G$ be a countable group 
and let $A$ be an admissible Artinian group structure. 
Let $D \subset G$ be a finite subset and let $P \subset A^D$ be an admissible subgroup. 
Then the subshift of finite type $\Sigma(A^G; D, P)$ is an admissible group subshift of $A^G$.  
\end{theorem}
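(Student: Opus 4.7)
The plan is to verify the two conditions of Definition~\ref{d:admissible-subgroup-shift} for $\Sigma \coloneqq \Sigma(A^G; D, P)$. Since $D$ is finite, $\Sigma$ is automatically closed in $A^G$ with respect to the prodiscrete topology: it is the intersection, over $g \in G$, of the clopen preimages of $P \subset A^D$ under the continuous shift--restriction maps $x \mapsto (g^{-1}x)\vert_D$. The substantive content is to show that $\Sigma_E \subset A^E$ is an admissible subgroup for every finite $E \subset G$.

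I would fix such an $E$ and choose an increasing exhaustion $(E_n)_{n \geq 0}$ of $G$ by finite subsets with $E \subset E_0$. Since $D$ is finite, each set $G_n \coloneqq \{ g \in G : gD \subset E_n \}$ is finite, and I introduce the local approximation
\[
W_n \coloneqq \{ x \in A^{E_n} : (g^{-1}x)\vert_D \in P \text{ for all } g \in G_n \} \subset A^{E_n}.
\]
Each condition $(g^{-1}x)\vert_D \in P$ cuts out the preimage of the admissible subgroup $P \subset A^D$ under a canonical projection $A^{E_n} \to A^{gD}$, so $W_n$ is an admissible subgroup by axioms (2) and (3) of Definition~\ref{d:general-artinian-structure}. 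Letting $\pi_n \colon A^{E_n} \to A^E$ denote the canonical projection, the images $Y_n \coloneqq \pi_n(W_n)$ form a descending sequence of admissible subgroups of $A^E$, descending because restriction sends $W_m$ into $W_n$ for $m \geq n$; by axiom (4) there exists $N$ with $Y_n = Y_N \eqqcolon Z$ for all $n \geq N$, and $Z$ is admissible in $A^E$.

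The main step, and the one I anticipate as the obstacle, is to identify $\Sigma_E$ with $Z$. The inclusion $\Sigma_E \subset Z$ is immediate, since $x \in \Sigma$ implies $x\vert_{E_n} \in W_n$. For the reverse inclusion, given $z \in Z$, I would form the inverse system $S_n \coloneqq \pi_n^{-1}(z) \cap W_n$ for $n \geq N$, whose terms are nonempty left translates of the admissible subgroups $\ker(\pi_n) \cap W_n$ of $A^{E_n}$, and whose transition maps are restrictions of canonical projections (hence admissible homomorphisms). Lemma~\ref{l:inverse-limit-artinian-general} then produces an element of $\varprojlim_n S_n$, which assembles into a configuration $x \in A^G$ with $x\vert_E = z$; and $x \in \Sigma$ follows because every local constraint $(g^{-1}x)\vert_D \in P$ becomes visible inside $W_n$ as soon as $gD \subset E_n$. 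This forces $\Sigma_E = Z$, and therefore $\Sigma_E$ is an admissible subgroup.

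The hard part is precisely the nonemptiness of $\varprojlim_n S_n$; this is where the admissible Artinian framework earns its keep, with Lemma~\ref{l:inverse-limit-artinian-general} playing the role that Lemma~\ref{l:inverse-limit-alg-grp} plays for algebraic group alphabets. Everything else amounts to bookkeeping against the closure axioms of Definition~\ref{d:general-artinian-structure}.
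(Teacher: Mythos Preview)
Your proof is correct and follows precisely the strategy the paper intends: the paper defers to \cite[Corollary~6.3]{cscp-2020} together with the general substitution principle outlined in the proof of Theorem~\ref{t:artinian-general}, namely replacing algebraic subgroups by admissible subgroups, Noetherianity by axiom~(4), and Lemma~\ref{l:inverse-limit-alg-grp} by Lemma~\ref{l:inverse-limit-artinian-general}, which is exactly what you carry out. One minor remark: the finiteness of $G_n$ actually uses that $D \neq \varnothing$ (and that $E_n$ is finite), not merely that $D$ is finite; the case $D = \varnothing$ is trivial since then $\Sigma = A^G$.
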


\begin{proof}
Similar to the proof of \cite[Corollary~6.3]{cscp-2020}. 
\end{proof}
\par 
Let the notations and hypotheses be as in Theorem~\ref{t:artinian-general-sft}, such a subshift 
$\Sigma(A^G; D, P)$ is called an \emph{admissible group subshift of finite type} of $A^G$.  
\par 
\begin{proposition} 
\label{p:markov-alg-admissible} 
Let $G$ be a countable group and let $A$ be an admissible Artinian group structure. 
The following are equivalent: 
\begin{enumerate} [\rm (a)] 
\item 
every admissible group subshift of $A^G$ is an admissible group subshift of finite type;  
\item 
every descending sequence of admissible group subshifts of $A^G$ eventually stabilizes.
\end{enumerate} 
 \end{proposition}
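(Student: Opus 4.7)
The plan is to mirror the proof of Proposition~\ref{c:markov-alg} in the admissible Artinian setting, systematically replacing "algebraic group" by "admissible Artinian group structure", "algebraic subgroup" by "admissible subgroup", and "homomorphism of algebraic groups" by "admissible homomorphism". The Noetherianity of the Zariski topology gets replaced by condition~(4) of Definition~\ref{d:general-artinian-structure} (the descending chain condition on admissible subgroups), and every invocation of Lemma~\ref{l:inverse-limit-alg-grp} is replaced by an invocation of Lemma~\ref{l:inverse-limit-artinian-general}.

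For the direction (a)$\implies$(b), let $(\Sigma_n)_{n \geq 0}$ be a descending sequence of admissible group subshifts of $A^G$ with intersection $\Sigma$. The first step is to prove the admissible analog of Theorem~\ref{c:descending-property}, namely that $\Sigma$ is itself an admissible group subshift. Closedness in the prodiscrete topology is automatic. For each finite $E \subset G$, I would establish $\Sigma_E = \bigcap_{n \geq 0} (\Sigma_n)_E$ by the two-parameter inverse-system construction of Theorem~\ref{c:descending-property}: choose a countable exhaustion $(M_i)$ of $G$ by finite subsets, and for $z \in \bigcap_{n} (\Sigma_n)_E$ consider $X_{ij} \coloneqq \{x \in (\Sigma_j)_{M_i} \colon x\vert_E = z\}$. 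Each $X_{ij}$ is a left translate of an admissible subgroup of $(\Sigma_j)_{M_i}$, so Lemma~\ref{l:inverse-limit-artinian-general} produces a global element of $\varprojlim X_{ij} \subset \Sigma$ lifting $z$. Condition~(4) then forces $((\Sigma_n)_E)_{n \geq 0}$ to stabilize, so $\Sigma_E$ is admissible. Once $\Sigma$ is known to be admissible, (a) makes it of finite type, and the admissible analog of Theorem~\ref{t:descending-alg-sft} (obtained by the same verbatim translation) concludes that the sequence stabilizes.

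For (b)$\implies$(a), let $\Sigma$ be an admissible group subshift of $A^G$. Choose an increasing exhaustion $G = \bigcup_{n \geq 0} E_n$ by finite subsets and set $\Sigma_n \coloneqq \Sigma(A^G; E_n, \Sigma_{E_n})$. By Theorem~\ref{t:artinian-general-sft}, each $\Sigma_n$ is an admissible group subshift of finite type. The chain $\Sigma \subset \Sigma_{n+1} \subset \Sigma_n$ is immediate, and Lemma~\ref{l:closed-lim} (which is set-theoretic and requires no algebraic hypothesis) yields $\Sigma = \bigcap_{n \geq 0} \Sigma_n$. Now (b) supplies an index $N$ with $\Sigma = \Sigma_N$, an admissible group subshift of finite type.

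The main obstacle will be the intersection step inside (a)$\implies$(b): verifying that the two-parameter inverse subsystem $(X_{ij})$ satisfies the hypotheses of Lemma~\ref{l:inverse-limit-artinian-general}. This reduces to checking an admissible analog of Lemma~\ref{l:restriction-map-also-alg}, i.e.\ that for finite $E \subset F \subset G$ the restriction $(\Sigma_n)_F \to (\Sigma_n)_E$ is an admissible homomorphism, together with the stability of admissible subgroups under intersections, preimages, and images along admissible maps. All of this is encoded directly in clauses~(2) and~(3) of Definition~\ref{d:general-artinian-structure}, so once the formal dictionary is set up the two implications unfold in parallel with the algebraic case.
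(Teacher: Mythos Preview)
Your proposal is correct and follows exactly the route the paper takes: the paper's proof of Proposition~\ref{p:markov-alg-admissible} reads in its entirety ``Similar to the proof of Proposition~\ref{c:markov-alg}'', and the preceding remarks in Section~\ref{s:generalization} indicate that all intermediate results (including Theorem~\ref{c:descending-property} and Theorem~\ref{t:descending-alg-sft}) carry over verbatim under the dictionary you describe, with Lemma~\ref{l:inverse-limit-artinian-general} replacing Lemma~\ref{l:inverse-limit-alg-grp}. You have simply unpacked that one-line proof with the appropriate level of care.
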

 
 \begin{proof}
 Similar to the proof of Proposition~\ref{c:markov-alg}.  
 \end{proof}

\begin{corollary}
\label{c:artinian-general} 
Let $G$ be a polycyclic-by-finite group 
and let $A$ be an admissible Artinian group structure. 
Then every descending sequence of admissible group subshifts of $A^G$ eventually stabilizes.  
\end{corollary}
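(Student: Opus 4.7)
The plan is to deduce this corollary immediately from the two results that precede it in the generalization section, namely Theorem~\ref{t:artinian-general} and Proposition~\ref{p:markov-alg-admissible}. Both of these are the admissible-Artinian analogues of results already proved for algebraic group subshifts, so no new technical work is required; the corollary is essentially a repackaging.

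First, I would observe that since $G$ is polycyclic-by-finite, Theorem~\ref{t:artinian-general} applies and tells us that every admissible group subshift $\Sigma \subset A^G$ has the form $\Sigma = \Sigma(A^G; D, W)$ for some finite subset $D \subset G$ and some admissible subgroup $W \subset A^D$. By the terminology introduced immediately after Theorem~\ref{t:artinian-general-sft}, this is precisely the assertion that every admissible group subshift of $A^G$ is an admissible group subshift of finite type. In other words, condition~(a) of Proposition~\ref{p:markov-alg-admissible} is satisfied for $A^G$.

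Next, I would invoke Proposition~\ref{p:markov-alg-admissible}: since $G$ is in particular countable (being polycyclic-by-finite), the proposition applies and gives the equivalence (a)$\iff$(b). From (a), which we have just established, we obtain (b): every descending sequence of admissible group subshifts of $A^G$ eventually stabilizes. This is exactly the conclusion of Corollary~\ref{c:artinian-general}, so the proof would end here.

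There is no real obstacle; the only point worth checking carefully while writing up is that the hypotheses of both tools are indeed satisfied, which reduces to noting that polycyclic-by-finite groups are countable (so Proposition~\ref{p:markov-alg-admissible} applies) and polycyclic-by-finite (so Theorem~\ref{t:artinian-general} applies). All the genuine work — the inverse-limit argument over cyclic extensions and the window-change lemma reducing finite extensions to the base case — has already been carried out in the proof of Theorem~\ref{t:artinian-general}, which itself mirrors the proof of Theorem~\ref{t:main-polycyclic-finite} with admissible subgroups replacing algebraic subgroups and Lemma~\ref{l:inverse-limit-artinian-general} replacing Lemma~\ref{l:inverse-limit-alg-grp}.
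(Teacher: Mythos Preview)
Your proposal is correct and matches the paper's own proof, which simply states that the corollary is a consequence of Proposition~\ref{p:markov-alg-admissible} and Theorem~\ref{t:artinian-general}. You have merely spelled out the one-line deduction in more detail, including the observation that polycyclic-by-finite groups are countable so that Proposition~\ref{p:markov-alg-admissible} applies.
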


\begin{proof}
It is a consequence of Proposition~\ref{p:markov-alg-admissible}  
and Theorem~\ref{t:artinian-general}.  
\end{proof}
\par 
Now let $G$ be a group and let $A, B$ be admissible Artinian group structures.  
\begin{definition}
A cellular automaton $\tau \colon A^G \to B^G$ is an \emph{admissible group cellular automaton}  
if $\tau$ admits a memory set $M$ whose associated local defining map 
$\mu \colon A^M \to B$ is an admissible homomorphism of admissible Artinian group structures (cf.~Definition~\ref{d:admissible-homomorphism}). 
\end{definition} 
\par 
In this case, consider an arbitrary finite subset $F \subset G$ and the induced map 
(cf.~\cite[Section~3]{ccp-goe-2020}) 
\[
\tau_{F}^{+} \colon A^{FM} \to B^F
\]
given by 
$\tau_F^{+}(c) = \tau(x) \vert_F$ for every $c \in A^{FM}$ and $x \in A^G$ such that $x \vert_{FM}=c$. 
Then we have the following: 
\begin{lemma} 
$\tau_{F}^{+}$ 
is an admissible homomorphism of Artinian group structures. 
\end{lemma} 

\begin{proof} 
Indeed, for every 
$c \in A^{FM}$, we have 
$\tau_{F}^{+} (c) (g) = \mu(g^{-1}c  \vert_M)$ for 
every $g \in F$ where we recall that  $g^{-1}c \in A^{g^{-1}FM}$ is defined by  
$(g^{-1}c)(h) \coloneqq c(gh)$ for every $h \in g^{-1}FM$. 
For each $g \in F$, let $\Gamma(g) \subset A^{gM} \times B^{\{ g \} }$ be the graph 
of the map $\mu_g \colon A^{gM} \to B^{ \{ g\} }$ defined by $\mu_g (x) \coloneqq \mu(g^{-1}x)$ and let $\pi_g \colon A^{FM} \times B^F \to A^{gM} \times B^{\{ g \} }$ 
be the canonical projection. 
\par 
By Definition  \ref{d:admissible-homomorphism}, there is an Artinian group structure $\HH$ of $A^M \times B$ compatible with the Artinian group structures of $A^M$ and $B$ so that the graph $\Gamma(g)$ is an admissible subgroup of $A^{gM} \times B^{\{ g \}}$ for every $g \in F$. 
\par 
Let $\iota \colon FM \to F \times M$ be any set-theoretic injection.  Let $\pi \colon A^{F \times M} \times B^F \to A^{F \times M}$ be the canonical projection. 
Then we can write $A^{FM} \times B^F = \pi^{-1} (A^{FM}\times \{1_A\}^{F\times M \setminus \iota(FM)})$ which is an admissible subgroup of $A^{F \times M} \times B^F$ 
by Property $(*)$ in  Definition~\ref{d:admissible-homomorphism}.
Hence,  $\HH$ induces a compatible Artinian group structure on $A^{FM} \times B^F$  (cf.~Lemma~\ref{l:induced-admissible}.(ii)). 
\par 
Then it is not hard to see that 
the graph of $\tau_{F}^{+}$ 
is the following intersection:  
\[
\Gamma_{\tau_{F}^{+}} = \bigcap_{g \in F}  
\pi_g^{-1}(\Gamma(g)). 
\] 
\par 
On the other hand, it follows  from Definition~\ref{d:admissible-homomorphism} that for every $g \in F$,   $\pi_g^{-1}(\Gamma(g))$ is an admissible subgroup of $A^{FM} \times B^F$  both regarded as admissible subgroups of $A^{F\times M} \times B^F$ . As intersections of admissible subgroups are also admissible subgroups, we deduce that 
$\Gamma_{\tau_{F}^{+}}$ is an admissible subgroup of $A^{FM} \times B^F$. Hence, 
 $\tau_{F}^{+} $ is an admissible homomorphism. 
\end{proof} 

\begin{theorem} 
\label{c:algebraic-group-admissible} 
Let $G$ be a polycyclic-by-finite group. 
Let $A, B$ be admissible Artinian group  structures. 
Let $\Sigma_1$ and  $\Sigma_2$ be  respectively admissible group subshifts 
of $A^G$ and $B^G$.  
Let $\tau \colon A^G \to B^G$ be an  admissible group cellular automaton.   
Then $\tau^{-1}(\Sigma_2)$ and  $\tau(\Sigma_1)$  
are respectively admissible group subshifts  of finite type of $A^G$ and $B^G$.  
\end{theorem}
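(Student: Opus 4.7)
The proof plan is to follow the strategy used to establish Theorem~\ref{c:intro-algebraic-group-abelian} in the algebraic setting, replacing throughout algebraic groups, algebraic subgroups, and homomorphisms of algebraic groups with admissible Artinian group structures, admissible subgroups, and admissible homomorphisms, and substituting applications of Lemma~\ref{l:inverse-limit-alg-grp} with Lemma~\ref{l:inverse-limit-artinian-general}. Once $\tau^{-1}(\Sigma_2)$ and $\tau(\Sigma_1)$ are identified as admissible group subshifts of $A^G$ and $B^G$ respectively, the finite type conclusion follows at once from Theorem~\ref{t:artinian-general}.

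For $\tau^{-1}(\Sigma_2)$, I would adapt Proposition~\ref{p:ker}. Prodiscrete closedness and $G$-invariance follow from the uniform continuity and $G$-equivariance of $\tau$. Fix a finite subset $E \subset G$ and an increasing exhaustion $(E_n)_{n\geq 0}$ of $G$ by finite subsets with $E \subset E_n$. Letting $M$ be a memory set and $\mu \colon A^M \to B$ the admissible local defining map, the fibered constructions $\tau_{E_n}^{+} \colon A^{E_nM} \to B^{E_n}$ are admissible homomorphisms, because admissibility is preserved under the canonical projections and restrictions supplied by Definition~\ref{d:general-artinian-structure}(2). Setting $W_n \coloneqq (\tau_{E_n}^{+})^{-1}((\Sigma_2)_{E_n})$ and $Z_n \coloneqq \psi_n(W_n)$ with $\psi_n \colon A^{E_nM} \to A^E$ the restriction, I claim $(\tau^{-1}(\Sigma_2))_E = \bigcap_{n} Z_n$. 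The forward inclusion is formal. Given $z \in \bigcap_n Z_n$, the sets $S_n \coloneqq \psi_n^{-1}(z) \cap W_n$ are nonempty left translates of admissible subgroups of $A^{E_nM}$ and the restrictions $S_m \to S_n$ are admissible; Lemma~\ref{l:inverse-limit-artinian-general} then produces $y \in A^G$ with $y\vert_E = z$ and $\tau(y) \in \Sigma_2$ (by Lemma~\ref{l:closed-lim} and closedness of $\Sigma_2$). The descending chain condition of Definition~\ref{d:general-artinian-structure}(4) then forces $(Z_n)_{n\ge 0}$ to stabilize, so the intersection is an admissible subgroup of $A^E$.

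For $\tau(\Sigma_1)$, I would combine the analogs of Theorem~\ref{t:closed-image} and Theorem~\ref{t:sofic-alg-grp}. The $G$-invariance is obvious. Prodiscrete closedness follows by applying Lemma~\ref{l:inverse-limit-artinian-general} to the pullback fibers $\{u \in (\Sigma_1)_{E_nM} \colon \tau_{E_n}^{+}(u) = y\vert_{E_n}\}$ associated with any limit point $y$; these are translates of admissible subgroups and the restriction transition maps are admissible. For a finite $E \subset G$, one has $(\tau(\Sigma_1))_E = \tau_E^{+}((\Sigma_1)_{EM})$, which is the image of an admissible subgroup of $A^{EM}$ under the admissible homomorphism $\tau_E^{+} \colon A^{EM} \to B^E$, hence admissible by the definition of admissible homomorphism. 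So $\tau(\Sigma_1)$ is an admissible group subshift of $B^G$.

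The main obstacle is purely bookkeeping: one must verify at each step that translates of admissible subgroups behave well under restriction, projection, intersection, direct image by admissible homomorphisms, and assembly into inverse systems. Definition~\ref{d:general-artinian-structure}, Lemma~\ref{l:induced-admissible}, and Lemma~\ref{l:lemma-artinian-1} are designed precisely to make these operations closed within the admissible class, and Lemma~\ref{l:inverse-limit-artinian-general} guarantees nonemptiness of the inverse limits that appear, so the arguments are mechanical adaptations of their algebraic counterparts. Finally, applying Theorem~\ref{t:artinian-general} to the admissible group subshifts $\tau^{-1}(\Sigma_2)$ and $\tau(\Sigma_1)$ yields the finite type conclusion.
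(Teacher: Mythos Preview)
Your proposal is correct and follows essentially the same approach as the paper, which simply records ``Similar to Theorem~\ref{c:intro-algebraic-group-abelian}.'' You have in fact written out in detail precisely the adaptation the paper has in mind: transplant Proposition~\ref{p:ker} and the arguments behind Theorems~\ref{t:closed-image} and~\ref{t:sofic-alg-grp} to the admissible Artinian setting via Lemma~\ref{l:inverse-limit-artinian-general}, and then invoke Theorem~\ref{t:artinian-general} for the finite type conclusion.
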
 

 \begin{proof} 
 Similar to Theorem~\ref{c:intro-algebraic-group-abelian}.  
 \end{proof}
 
\begin{corollary}
\label{c:limit-set-alg-admissible} 
Let $G$ be a polycyclic-by-finite group and let $A$ be an  admissible Artinian group structure. 
Let $\Sigma$ be an admissible group subshifts of $A^G$. 
Let  $\psi \colon A^G \to A^G$ be an admissible group cellular automaton. 
Suppose that $\psi(\Sigma) \subset \Sigma$ and let $\tau \colon \Sigma \to \Sigma$ be the restriction map.   
Then the limit set $\Omega(\tau)$ is an  admissible group subshift of finite type of $A^G$ and $\tau$ is stable.  
\end{corollary}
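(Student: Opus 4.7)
The plan is to mimic, in the admissible setting, the proof of Corollary~\ref{c:limit-set-alg-grp} given earlier in the paper, invoking the admissible analogues Theorem~\ref{c:algebraic-group-admissible} and Corollary~\ref{c:artinian-general} in place of Theorem~\ref{t:sofic-alg-grp} and Theorem~\ref{c:descending-property}.

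First I would set up notation by noting that since $\tau$ is the restriction of the admissible group cellular automaton $\psi \colon A^G \to A^G$ and since $\psi(\Sigma)\subset \Sigma$, we have $\tau^n(\Sigma) = \psi^n(\Sigma)$ for every $n \geq 1$. Applying Theorem~\ref{c:algebraic-group-admissible} to $\psi$ and $\Sigma$, the image $\psi(\Sigma)$ is an admissible group subshift of finite type of $A^G$, hence in particular an admissible group subshift. A straightforward induction on $n$, at each step feeding $\psi^{n-1}(\Sigma)$ into Theorem~\ref{c:algebraic-group-admissible} as the source admissible group subshift, shows that $\psi^n(\Sigma)$ is an admissible group subshift (indeed, of finite type) of $A^G$ for every $n \geq 1$.

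Next, $(\psi^n(\Sigma))_{n \geq 1}$ is by construction a descending sequence of admissible group subshifts of $A^G$. Since $G$ is polycyclic-by-finite, Corollary~\ref{c:artinian-general} guarantees that this sequence eventually stabilizes: there exists $N \geq 1$ such that $\psi^n(\Sigma) = \psi^N(\Sigma)$ for every $n \geq N$. In particular $\psi^{N+1}(\Sigma) = \psi^N(\Sigma)$, so $\tau$ is stable in the sense of the definition given in the Introduction. Moreover,
\[
\Omega(\tau) = \bigcap_{n \geq 1} \psi^n(\Sigma) = \psi^N(\Sigma),
\]
which is an admissible group subshift of finite type of $A^G$ by the very first step.

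There is essentially no technical obstacle here; the argument is formal once the admissible analogues of the main theorems of the paper are in place. The only point that requires some care is the inductive verification that iterating $\psi$ preserves the property of being an admissible group subshift, but this is immediate from Theorem~\ref{c:algebraic-group-admissible} applied to the fixed ambient admissible group cellular automaton $\psi$ at each stage.
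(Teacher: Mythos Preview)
Your argument is correct, and indeed it is slightly cleaner than the route the paper implicitly suggests. The paper says only ``Similar to the proof of Corollary~\ref{c:limit-set-alg-grp}'', and that earlier proof proceeds by (i) showing the iterated images are algebraic group subshifts, (ii) invoking Theorem~\ref{c:descending-property} to see that the intersection $\Omega(\tau)$ is again an algebraic group subshift, (iii) applying Theorem~\ref{t:intro-sft-property-alg-grp} to conclude $\Omega(\tau)$ is of finite type, and (iv) quoting an external result \cite[Theorem~1.3.(iv)]{cscp-2020} to deduce stability. You instead use Corollary~\ref{c:artinian-general} to get that the descending sequence $(\psi^n(\Sigma))_{n\geq 1}$ itself stabilizes; this yields stability of $\tau$ and the equality $\Omega(\tau)=\psi^N(\Sigma)$ simultaneously, with the finite-type property of $\Omega(\tau)$ then coming for free from the first step via Theorem~\ref{c:algebraic-group-admissible}. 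Your route stays entirely within the results proved in the paper and avoids the external reference, at the cost of not literally mirroring the structure of the proof of Corollary~\ref{c:limit-set-alg-grp}.
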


\begin{proof}  
Similar to the proof of  Corollary~\ref{c:limit-set-alg-grp}.  
\end{proof}

\section{Density of periodic configurations in algebraic subshifts and admissible group subshifts} 
\label{s:density} 
In this section, we obtain  
some generalizations to the algebraic setting of a known density result in the classical setting with finite alphabets. 
\par 
We then give some direct consequences of our main results 
on the density of periodic configurations in algebraic group subshifts and 
more generally in admissible group subshifts. 
A counter-example is given at the end of the section. See also  \cite{phung-2020-shadow} for another  application of our main results on the  pseudo-orbit tracing property. 

\subsection{Languages, words and irreducibility of subshifts}  
Let $A$ be a finite set.
We recall some useful terminologies concerning languages and words of a given subshift $\Sigma \subset A^\Z$. 
Let $A^*$ be the free monoid generated by $A$. 
Elements of $A^*$ are  finite words $w = a_1 a_2 \cdots a_k$, where $n \geq 0$, with letters $a_i \in A$ for all $1 \leq i \leq k$, 
The concatenation of words defines the monoid structure on $A^*$ whose identity 
element is the empty word denoted by $\varepsilon \in A^*$. 
For every word  $w = a_1 a_2 \cdots a_k$,  we define its \emph{length} by $\vert w  \vert \coloneqq k$. 
\par 
Given a word $w = a_1 a_2 \cdots a_n \in A^*$ of length $n\geq 1$, we define a periodic configuration $w^\infty \in A^\Z$ given by 
$w^\infty(i + k n) = a_i$ for all  $k \in \Z$ and $1 \leq i \leq n$.  
\par
A word $w \in A^*$ is a \emph{subword} of a configuration $x \in A^\Z$ if either $w$ is the empty word or there exist integers $m \geq$ such that
$w = x(n)x(n+1) \cdots x(m)$.
\par
Consider now a subshift $X \subset A^\Z$.
The \emph{language} of $X$ is the subset $L(X) \subset A^*$  consisting of all words $w \in A^*$
such that $w$ is a subword of some configuration in $X$.
\par 
Now let $G$ be a group and let $A$ be a set. 
Let $\Sigma \subset A^G$ be a subshift. 
Then $\Sigma$ is \emph{irreducible} if it  for any $y, z \in \Sigma$ and 
any finite subsets $E,F \subset G$, there exist $x \in \Sigma$ and $g \in G$ 
such that $x\vert_E = y\vert_E$ and $(gx)\vert_F = z\vert_F$. 
If $G=\Z$, this means that for all $u,v \in L(\Sigma)$,  
there exists $w \in L(\Sigma)$ such that $uwv \in L(\Sigma)$. 
\par 
We say that $\Sigma$ is \emph{strongly irreducible} 
if there exists a finite subset $\Delta \subset G$ with the following property.  
If $E, F$ are finite subsets of $G$ such that $E \cap F\Delta = \varnothing$, 
 then, for any $y, z \in \Sigma$, 
 there exists $x \in \Sigma$ such that 
$x\vert_E = y\vert_E$ and $x\vert_F = z\vert_F$. 
 
\subsection{Density of periodic  configurations in algebraic $W$-subshifts} 
As a generalization of strongly irreducible subshifts and of irreducible subshifts of finite type, 
the following definition is introduced in \cite{csc-2011-density} after Benjy Weiss 
(cf.~\cite[Proposition~4.4,~Proposition~4.5]{csc-2011-density}). 
\begin{definition} 
[cf.~\cite{csc-2011-density}]  
\label{def:weiss-subshift} 
Let $A$ be a set. An irreducible subshift $\Sigma \subset A^Z$ is called a  \emph{W-subshift} if there exists an  integer 
 $N \geq 0$  such that for every $u \in  L(\Sigma)$, there exists $c \in L(\Sigma)$ such that $\vert c \vert \leq N$ and 
  $ucu \in L(\Sigma)$. 
\end{definition} 

 We obtain below a generalization to the algebraic setting of a criterion of the density of periodic configurations 
with finite alphabet  (cf.~\cite[Theorem~5.1]{csc-2011-density}). 

\begin{theorem}
\label{density-complete-z}
Let $V$ be a complete algebraic variety  over an algebraically closed field.  
Let  $\Sigma \subset V^\Z$ be an algebraic  $W$-subshift. 
Then $\Sigma$ contains a dense set of periodic configurations.
\end{theorem}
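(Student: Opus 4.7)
The plan is to adapt the classical finite-alphabet argument for density of periodic configurations in $W$-subshifts (cf.~\cite[Theorem~5.1]{csc-2011-density}) to the algebraic setting, replacing compactness of the finite alphabet by completeness of $V$ and Noetherianity of the Zariski topology. First I would reduce, via shift-invariance of $\Sigma$ and the basis of prodiscrete open sets of the form $\{y \in \Sigma : y\vert_E = v\}$, to proving the following: for every $u \in L(\Sigma)$ of length $M$, there exists a periodic $y \in \Sigma$ with $y\vert_{[0,M-1]} = u$. Fixing such a $u$ and letting $N$ be the $W$-subshift constant, for each $m \in \{0,1,\dots,N\}$ and each $K \ge 1$ I would introduce
\[
P_m^{(K)} \coloneqq \{\, c \in V^m : (uc)^K u \in L(\Sigma)\,\} \subset V^m, \qquad P_m \coloneqq \bigcap_{K \ge 1} P_m^{(K)}.
\]
Using closedness of $\Sigma$ in the prodiscrete topology together with Lemma~\ref{l:closed-lim}, one checks that $c \in P_m$ if and only if the periodic configuration $(uc)^\infty \in V^\Z$ lies in $\Sigma$, reducing the problem to exhibiting some $m \le N$ with $P_m \ne \varnothing$.

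The next step is to verify that each $P_m^{(K)}$ is Zariski closed in $V^m$. Concretely, $P_m^{(K)}$ is the image, under the projection extracting the common connector, of the closed subvariety of $\Sigma_{[0,(K+1)M+Km-1]}$ cut out by the algebraic equations $w\vert_{[j(M+m),j(M+m)+M-1]} = u$ for $0 \le j \le K$, together with the requirement that all $K$ intervening connector blocks coincide. Because $V$ is complete, $V^{(K+1)M+Km}$ is complete, so $\Sigma_{[0,(K+1)M+Km-1]}$ is a closed subvariety of a complete variety and hence itself complete; since morphisms out of complete varieties have Zariski-closed images, $P_m^{(K)}$ is closed in $V^m$. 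By Noetherianity of the Zariski topology on $V^m$, the descending chain $(P_m^{(K)})_{K \ge 1}$ stabilizes, giving $P_m = P_m^{(K_0)}$ for some $K_0 = K_0(m)$. Thus $P_m$ is nonempty precisely when $P_m^{(K)}$ is nonempty for every $K$.

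The remaining and principal task is to produce $m \le N$ with $P_m^{(K)} \ne \varnothing$ for all $K$. The plan is to iterate the $W$-property, starting from $u_0 \coloneqq u$ and recursively setting $u_{j+1} \coloneqq u_j c_j u_j$ where $c_j$ is furnished by the $W$-property applied to $u_j$, to obtain for each $k \ge 1$ a word $u_k \in L(\Sigma)$ containing $2^k$ occurrences of $u$ with all intervening connectors of length at most $N$. One then combines a pigeonhole extraction on the finite set $\{0,\dots,N\}$ of possible gap-lengths, yielding arbitrarily long constant-gap-length runs for some fixed $m_\ast \le N$, with an algebraic-geometric limiting argument exploiting completeness of $V^{m_\ast}$ and the inverse-system machinery of Lemma~\ref{l:inverse-limit-alg-grp}, applied to the decreasing chain of ``common-connector'' closed subvarieties, to extract a single recurring connector $c \in V^{m_\ast}$ realizing $(uc)^K u \in L(\Sigma)$ for every $K$. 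The hard part is precisely this last extraction: in the finite-alphabet case a second pigeonhole on the finitely many candidate connector values suffices, but here $V^{m_\ast}$ is typically infinite, so isolating a single common connector must be carried out using completeness of $V^{m_\ast}$ together with the descending-chain stability of closed subvarieties, which is the genuine algebraic-geometric content of the theorem. Once such a $c$ is found, the periodic configuration $(uc)^\infty \in \Sigma$ extends $u$, completing the proof.
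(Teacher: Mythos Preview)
Your reduction via completeness and Noetherianity is on the right track, but there is a genuine gap at precisely the step you flag as ``the hard part.'' From the $W$-iteration $u_{j+1}=u_jc_ju_j$ you obtain words in $L(\Sigma)$ containing many copies of $u$ separated by connectors of length $\le N$; pigeonholing on lengths yields (at best) arbitrarily long blocks $u c_1 u c_2 \cdots c_{K}u$ with all $|c_i|=m_\ast$ but with \emph{distinct} $c_i$. This shows the ``different-connector'' variety is nonempty, not your diagonal set $P_{m_\ast}^{(K)}$. The inverse-system lemma you cite does not bridge this: the natural inverse system built from those blocks has transition maps that forget some of the $c_i$'s, so its inverse limit produces a bi-infinite sequence of possibly distinct connectors, not a single recurring one. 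Nothing you have written forces the diagonal to be hit, and there is no Noetherian descent available here because the sets you would need to intersect are not nested in the right way.

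The paper's argument avoids this difficulty by a different organizing idea. For each $u\in L(\Sigma)$ one forms the \emph{filler set} $F(u)\subset Z\coloneqq\coprod_{k=1}^{N}V^{k}$ of all $c$ with $|c|\le N$ and $ucu\in L(\Sigma)$; completeness of $V$ makes each $F(u)$ a closed subvariety of $Z$. By Noetherianity one chooses $u_0\in L(\Sigma)$ with $F(u_0)$ minimal under inclusion among all $F(u)$. The key observation is that whenever $u_0vu_0\in L(\Sigma)$ one has $F(u_0vu_0)\subset F(u_0)$, hence equality by minimality; thus any fixed $c_0\in F(u_0)$ lies in $F(u_0vu_0)$ for every such $v$, and iterating gives $(u_0c_0)^{2^k-1}u_0\in L(\Sigma)$ for all $k$, whence $(u_0c_0)^\infty\in\Sigma$. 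For an arbitrary $w$, irreducibility supplies $p,q$ with $u_0pwqu_0\in L(\Sigma)$, and the same mechanism embeds $w$ in a periodic configuration. Note that the period so obtained is $|u_0|+|c_0|+|p|+|w|+|q|$, not $|w|+m$ with $m\le N$; your target $P_m\neq\varnothing$ is a strictly stronger (and possibly false) statement than what is needed.
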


\begin{proof} 
(Benjy Weiss + $\varepsilon$) 
We only need to prove that every word $w  \in L(\Sigma)$ is a subword of some  periodic configuration of $\Sigma$.
\par 
As $\Sigma$ is a $W$-subshift, 
we can find an integer $N \geq 0$  satisfying the condition described in  Definition~\ref{def:weiss-subshift}. 
Let $Z \coloneqq \coprod_{k=1}^{N} V^{\{1, \cdots, k\}}$. 
Since $V$ is a complete algebraic variety by hypothesis, so is $Z$.  
For each word $u\in L(\Sigma)$, consider  the subset $F(u) \subset Z$ which  represents all nonempty words $c \in A^*$ of length at most $N$ 
such that  $u c u \in L(\Sigma)$. 
\par 
By the choice of $N$, the set $F(u)$ is nonempty for every word $u \in L(\Sigma)$. 
We claim that $F(u)$ is in fact an algebraic subvariety of $Z$. To see this, 
let $m= |u|$ and  define for every $1 \leq k \leq N$ a subset of $ V^{\{-m+1, \cdots, k + m\}}$: 
\begin{multline*} 
 uV^{\{1, \cdots, k\}} u \coloneqq 
 \{ x \in V^{\{-m+1, \cdots, k + m\}} \colon  x(-m+1)\cdots x(0) = \\ 
 =x(k+1) \cdots x( k+m) = u \in L(\Sigma) \}. 
\end{multline*} 
\par 
Let $Y_u \coloneqq \Sigma_{\{-m+1, \cdots, k + m\}} \cap uV^{\{1, \cdots, k\}} u \subset V^{\{-m+1, \cdots, k + m\}}$. 
We consider also $\pi_k \colon V^{\{-m+1, \cdots,  k + m\}} \to V^{\{0, \cdots, k\}}$
the canonical projection induced by the inclusion $\{0, \cdots, k\} \subset \{-m+1, \cdots, k+ m\}$.  
Then we have for every $1 \leq k \leq N$ that 
\[
F(u) \cap V^{\{1, \cdots, k\}} = \pi_k (Y_u). 
\] 
\par 
Remark that $\Sigma_{\{-m+1, \cdots, k + m\}}$ is clearly a complete algebraic subvariety of 
$V^{\{-m+1, \cdots, k + m\}}$.  
 On the other hand, $uV^{\{1, \cdots, k\}} u$ is a complete algebraic subvariety  of 
 $V^{\{-m+1, \cdots, k + m\}}$. 
 It follows that $Y_u$ is also 
 is a complete algebraic subvariety  of  
 $V^{\{-m+1, \cdots, k + m\}}$. 
Note that $\pi_k$ is clearly an algebraic morphism. 
Therefore, $F(u)$ is a finite union of algebraic subvarieties of $Z$ so that it is indeed an algebraic subvariety of $Z$.   
\par 
Hence, by the Noetherianity of the Zariski topology, 
there exists a word $u_0 \in L(\Sigma)$ such that there does not exists $u \in L(\Sigma)$ such that 
$F(u) \subsetneqq F(u_0)$. Let $c_0 \in F(u_0)$ be an arbitrary fixed word.  
\par
Suppose that  $v \in A^*$ is such that  $u_0vu_0\in L(\Sigma)$. 
Then it is immediate that $F(u_0vu_0 ) \subset F(u_0)$ and hence  by the choice of $u_0$, we must have
\begin{equation*}
\label{e:fillers}
F(u_0 v u_0) = F(u_0). 
\end{equation*}
In particular, $c_0 \in F(u_0 v u_0)$ and thus $u_0 v u_0 c_0 u_0 v u_0 \in L(\Sigma)$. 
From this point, the rest of the proof is exactly the same as in the proof of \cite[Theorem~5.1]{csc-2011-density}. 
 \end{proof}

\subsection{Some consequences on admissible group subshifts} 

\begin{corollary}
\label{density-alg-grp-z}
Let $A$ be an admissible Artinian group structure. 
Let  $\Sigma \subset A^\Z$ be an irreducible admissible group subshift. 
Then $\Sigma$ contains a dense set of periodic configurations. 
\end{corollary}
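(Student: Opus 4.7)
The strategy is to reduce to the classical loop construction for subshifts of finite type, using Theorem~\ref{t:artinian-general} to upgrade the admissible group hypothesis to a finite type property. Since $\Z$ is (infinite) cyclic, and in particular polycyclic-by-finite, Theorem~\ref{t:artinian-general} applied to the admissible group subshift $\Sigma \subset A^\Z$ produces a finite subset $D \subset \Z$ and an admissible subgroup $W \subset A^D$ such that $\Sigma = \Sigma(A^\Z; D, W)$. Enlarging $D$ and invoking Lemma~\ref{l:base-change-sft} (with $\Sigma_E$ in place of the original $W$ for $E \supset D$), I may assume without loss of generality that $D = \{0, 1, \ldots, N-1\}$ for some integer $N \geq 1$, while preserving the identity $\Sigma = \Sigma(A^\Z; D, \Sigma_D)$.

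To show density, it suffices to produce, for every $x \in \Sigma$ and every finite window $E \subset \Z$, a periodic configuration $p \in \Sigma$ with $p\vert_E = x\vert_E$. After translating and enlarging $E$, I may take $E = \{0, 1, \ldots, n-1\}$ with $n \geq N$. Set $u \coloneqq x\vert_E \in L(\Sigma)$. Applying the irreducibility of $\Sigma$ in the form explicitly recalled in the paper (for all $u, v \in L(\Sigma)$ there is $w \in L(\Sigma)$ with $uwv \in L(\Sigma)$) with the choice $v = u$, I obtain a word $w \in L(\Sigma)$ such that $uwu \in L(\Sigma)$. Define $p \coloneqq (uw)^\infty \in A^\Z$, the periodic configuration of period $n + |w|$ obtained by concatenating $u$ and $w$ and extending by translations of $(n + |w|)\Z$.

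The crux is to verify that $p \in \Sigma$. Because $\Sigma = \Sigma(A^\Z; \{0, \ldots, N-1\}, \Sigma_D)$, this amounts to checking that every length-$N$ subword $p\vert_{[i, i+N-1]}$ lies in $\Sigma_D$. By periodicity one may restrict to $i \in \{0, 1, \ldots, n + |w| - 1\}$, and since $n \geq N$ the worst case $i = n + |w| - 1$ gives a window ending at index $2n + |w| - 2 \leq 2n + |w| - 1$, which is still within $uwu$. Hence every such window is a length-$N$ subword of $uwu$, and since $uwu \in L(\Sigma)$ extends to a configuration of $\Sigma$ all these windows lie in $\Sigma_D$; therefore $p \in \Sigma$. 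By construction $p$ is periodic and $p\vert_E = u = x\vert_E$, so periodic configurations are dense in $\Sigma$ with respect to the prodiscrete topology.

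There is no real obstacle: the only subtle point is the arithmetic inequality $n \geq N$ ensuring that the loop $(uw)^\infty$ genuinely lies in the SFT defined by the window $D$; once this is secured, the proof is entirely the classical concatenation argument, now made available in the admissible setting thanks to Theorem~\ref{t:artinian-general}.
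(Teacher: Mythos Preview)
Your proof is correct and follows the paper's overall strategy: invoke Theorem~\ref{t:artinian-general} to reduce $\Sigma$ to a subshift of finite type with window of length $N$, then build periodic approximants by a loop construction. The only difference lies in how the loop is formed. The paper sandwiches the target block between two copies of the identity word $e=\varepsilon^{M}$, applying irreducibility twice to obtain $e u_n w_n v_n e \in L(\Sigma)$ and then periodizing $e u_n w_n v_n$; the overlap at the seam is covered because the word in hand has length exactly $P+|M|$. You instead use the target block $u$ itself as the overlap, applying irreducibility once to get $uwu\in L(\Sigma)$ and periodizing $uw$; the condition $n\geq N$ plays the same role, guaranteeing that every length-$N$ window of $(uw)^\infty$ already appears in $uwu$. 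Your variant is a touch more economical and, notably, makes no use of the group structure in the concatenation step (only in the reduction to finite type), so it in fact proves density of periodic points in \emph{any} irreducible subshift of finite type of $A^\Z$ over an arbitrary alphabet. One minor arithmetic slip: the worst-case window ends at index $n+|w|+N-2$, not $2n+|w|-2$; the desired bound $\leq 2n+|w|-1$ still holds since $N\leq n$.
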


\begin{proof} 
Theorem~\ref{t:artinian-general} implies that $\Sigma$ is an admissible group subshift of finite type of $A^\Z$. 
Hence, there exists an interval $M= \{ -k , \dots, k \} \subset \Z$ where $k \geq 0$ such that 
$\Sigma = \Sigma (A^\Z; M, \Sigma_M)$. The argument below is standard 
(see for example \cite{fiorenzi-periodic}). 
\par 
Let $y \in \Sigma$ be a configuration.  
Let $\varepsilon \in A$ be the neutral element of the group $A$ and let $e \coloneqq \varepsilon^M \in A^M$. 
We regard $e \in L(\Sigma)$ as a word of length $|M|$. 
For every $n \geq 0$, define $w_n \coloneqq y\vert_{\{-n, \cdots, n\}} \in L(\Sigma)$.  
Since $\Sigma$ is irreducible, there exists a word $u_n \in L(\Sigma)$ such 
that $eu_nw_n \in L(\Sigma)$. Again, by the irreducibility of $\Sigma$, 
there exists $v_n \in L(\Sigma)$  such that $eu_nw_n v_ne \in L(\Sigma)$. 
Consider the periodic configuration $x_n \in A^\Z$ 
which is a suitable translate of $(eu_nw_n v_n)^{\infty}$ such that  
 $(x_n)\vert_{\{-n, \cdots, n\}} =  w_n= y\vert_{\{-n, \cdots, n\}} \in A^{\{-n, \cdots, n\}}$.  
It is clear that  $(gx_n)\vert_M \in \Sigma_M$ for all $g \in \Z$. 
Thus, $x_n \in \Sigma(A^\Z; M, \Sigma_M)= \Sigma$. 
\par 
Therefore, we obtain a sequence of periodic configurations $(x_n)_{n \geq 0}$ of $\Sigma$ which converges to $y$. 
The proof is thus completed. 
\end{proof} 

The following theorem is proved in \cite[Theorem~1.1]{csc-2011-density} where it was stated for a finite alphabet but 
the proof actually holds for an arbitrary alphabet. 

 \begin{theorem} 
 [cf.~\cite{csc-2011-density}]
\label{t:density-periodic} 
Let $G$ be a residually finite group and let $A$ be a set.
Suppose that $\Sigma \subset A^G$ is a strongly irreducible subshift of finite type
and that there exists a periodic configuration in $X$.
Then $\Sigma$ contains a dense set of periodic configurations.
\qed
\end{theorem}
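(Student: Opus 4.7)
My plan is to construct, for each given $x \in \Sigma$ and each finite $F \subset G$, a periodic configuration $y \in \Sigma$ with $y\vert_F = x\vert_F$, using strong irreducibility and residual finiteness to graft the finite piece $x\vert_F$ onto a suitably periodized version of the given periodic configuration $x_0$.

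First I would fix a finite defining window $D$ for $\Sigma$ and the strong irreducibility constant $\Delta$, enlarging them to a common symmetric finite set containing $1_G$ so that $D \subset \Delta = \Delta^{-1}$. Set the buffered set $K \coloneqq F\Delta^{3}$. By residual finiteness, choose a finite-index normal subgroup $H_0$ of $G$ with $H_0 \cap (KK^{-1}) = \{1\}$, and then let $H \coloneqq H_0 \cap \Stab_G(x_0)$, which still has finite index in $G$ since $x_0$ is periodic. Thus $x_0$ is $H$-periodic and the translates $hK$, $h \in H$, are pairwise disjoint. Pick a right transversal $T$ of $H$ in $G$ containing $K$.

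Next I would apply strong irreducibility to the two finite sets $F$ and $T \setminus F\Delta$, which are $\Delta$-separated because any element of $F \cap (T \setminus F\Delta)\Delta$ would force some $t \in T \setminus F\Delta$ with $t \in F\Delta$. This yields $w \in \Sigma$ with $w\vert_F = x\vert_F$ and $w\vert_{T \setminus F\Delta} = x_0\vert_{T \setminus F\Delta}$. Define $y \in A^G$ as the $H$-periodic extension of $w\vert_T$: for $g \in G$, set $y(g) \coloneqq w(t(g))$, where $t(g) \in T$ is the unique representative of the right coset $Hg$. Then $y$ is periodic (its stabilizer contains $H$) and $y\vert_F = x\vert_F$.

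The main obstacle, the verification that $y \in \Sigma$, amounts to checking that $d \mapsto y(gd)$ lies in the allowed pattern set $P \subset A^D$ for every $g \in G$; by $H$-invariance of the local SFT condition it suffices to test $g \in T$. If $gD \subset T$ then $y\vert_{gD} = w\vert_{gD}$ is allowed because $w \in \Sigma$. In the boundary case $gD \not\subset T$, I would show that $t(gd) \in T \setminus F\Delta$ for every $d \in D$, so that $w(t(gd)) = x_0(t(gd)) = x_0(gd)$ by the $H$-periodicity of $x_0$, making the pattern equal the allowed pattern $x_0\vert_{gD}$. The required buffer bookkeeping is: first $g \notin F\Delta^{2}$, since otherwise $gD \subset F\Delta^{3} = K \subset T$ would contradict $gD \not\subset T$; then, for each $d \in D$, either $gd \in T$ (in which case $gd \in F\Delta$ would force $g \in F\Delta D^{-1} \subset F\Delta^{2}$, a contradiction) or $gd \notin T$, in which case writing $gd = h\cdot t(gd)$ with $h \neq 1$ and assuming $t(gd) \in F\Delta \subset F\Delta^{2} \subset T$ gives $g = h\, t(gd)\, d^{-1} \in hF\Delta^{2} \subset hT$, which contradicts $g \in T$ together with the disjointness of $T$ and $hT$ for $h \in H \setminus \{1\}$.
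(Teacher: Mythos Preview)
The paper does not supply its own proof of this statement; it merely cites \cite[Theorem~1.1]{csc-2011-density} and remarks that the argument there, written for finite alphabets, carries over verbatim to an arbitrary set $A$. Your argument is correct and is precisely the standard proof of that reference: use residual finiteness together with the stabilizer of the given periodic point $x_0$ to produce a finite-index subgroup $H$ separating the buffered window $K=F\Delta^{3}$, splice $x\vert_F$ onto $x_0$ over a transversal $T\supset K$ via strong irreducibility, periodize over $H$, and check the SFT condition window by window, the buffer ensuring that every $D$-window meeting the boundary of $T$ reads only $x_0$-values.
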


Since every polycyclic-by-finite group is  residually finite, we obtain the following immediate consequence of Theorem~\ref{t:density-periodic} and   
Theorem~\ref{t:artinian-general}: 

\begin{corollary}
\label{density-alg-grp-res-finite} 
Let $G$ be a polycyclic-by-finite group and let $A$ be an admissible Artinian group structure.   
Let  $\Sigma \subset A^G$ be a strongly irreducible admissible group subshift. 
Then $\Sigma$ contains a dense set of periodic configurations. 
\qed
\end{corollary}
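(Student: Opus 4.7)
The plan is to combine the two cited results as the remark preceding the statement suggests. First, I would invoke Theorem~\ref{t:artinian-general} to reduce $\Sigma$ to a subshift of finite type: since $G$ is polycyclic-by-finite and $A$ is an admissible Artinian group structure, the strongly irreducible admissible group subshift $\Sigma \subset A^G$ is in fact of the form $\Sigma(A^G; D, W)$ for some finite $D \subset G$ and some admissible subgroup $W \subset A^D$. In particular $\Sigma$ is a subshift of finite type in the classical sense.

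Next I would recall the classical fact that every polycyclic-by-finite group is residually finite (this is standard; e.g.\ finitely generated nilpotent groups are residually finite and polycyclic-by-finite groups are built from cyclic factors, with residual finiteness preserved by the relevant extensions). This provides the hypothesis on $G$ needed to apply Theorem~\ref{t:density-periodic}.

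The remaining ingredient is the existence of at least one periodic configuration in $\Sigma$. Here I would use the same argument as in Proposition~\ref{p:alg-grp-sshift-subgroup}, transposed to the admissible setting: for any finite $E \subset G$, the restriction $\Sigma_E$ is an admissible subgroup of $A^E$ and in particular contains the neutral element $\varepsilon^E$, where $\varepsilon \in A$ is the identity. Writing $G$ as a countable increasing union of finite subsets (countability of $G$ follows from polycyclic-by-finiteness), the approximation Lemma~\ref{l:closed-lim} together with the prodiscrete closedness of $\Sigma$ forces $\varepsilon^G \in \Sigma$. Since $\varepsilon^G$ is fixed by the shift action, it is periodic, and the hypothesis on a periodic configuration in Theorem~\ref{t:density-periodic} is satisfied.

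Applying Theorem~\ref{t:density-periodic} to the strongly irreducible subshift of finite type $\Sigma$ containing the periodic point $\varepsilon^G$ over the residually finite group $G$ yields the density of periodic configurations in $\Sigma$. The only mildly nontrivial step is producing the periodic point, but this is essentially built into the definition of an admissible group subshift (closedness plus admissible subgroup restrictions), so no real obstacle arises and the proof is short.
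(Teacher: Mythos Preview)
Your argument is correct and follows exactly the route the paper intends: the corollary is stated with a \qed immediately after a remark invoking residual finiteness of polycyclic-by-finite groups together with Theorem~\ref{t:density-periodic} and Theorem~\ref{t:artinian-general}. Your write-up is in fact more careful than the paper's, since you explicitly verify the remaining hypothesis of Theorem~\ref{t:density-periodic} (existence of a periodic point) by observing that $\varepsilon^G \in \Sigma$, a step the paper leaves implicit.
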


\subsection{A counter-example} 

Let $G$ be a finitely generated abelian group and let $A$ be a finite group. 
By \cite[Corollary~7.4]{kitchens-schmidt}, we know that for every closed subshift $\Sigma \subset A^G$ 
which is also an abstract subgroup, the set of periodic configurations of $\Sigma$ is dense in $\Sigma$ with respect to 
the prodiscrete topology.   
\par 
However, when the alphabet is not a compact space, the set of periodic configurations of an admissible group subshift  
may fail to be dense. 
\begin{example} 
Let $G= \Z$ and let $A= \Q$ be the field of rational numbers. 
Let $D = \{0,1\} \subset G$ and let $P = \{(x_0, x_1) \in A^D \colon x_1 = a x_0\}$ 
for some constant $a \in \Q $ such that $a >1$.  
Consider the subshift of finite type $\Sigma \coloneqq \Sigma(A^G; D, P)$ of $A^G$. 
Then it is clear that $\Sigma = \Q  c$ where $c \in A^G$ is the configuration 
defined by $c(n)= a^n$ for every $n \in \Z$. Hence, $\Sigma$ is also a $\Q$-vector subspace of $A^G$. 
However, the only periodic configuration of $\Sigma$ is the zero-configuration $0^G$ since 
$a^n \to + \infty$ when $n \to + \infty$. 
It follows that $\Sigma$ does not contain a dense subset of periodic configurations. 
\end{example}

\bibliographystyle{siam}
\bibliography{lsalgsft}

\end{document}